\newcommand{\f}{\frac}
\newcommand{\del}{\partial}
\newcommand{\im}{\operatorname{Im}}
\newcommand{\R}{\mathbb R}
\newcommand{\C}{\mathbb C}
\newcommand{\N}{\mathbb N}
\newcommand{\Z}{\mathbb Z}
\newcommand{\re}{\operatorname{Re}}
\newcommand{\eps}{\varepsilon}
\renewcommand{\epsilon}{\varepsilon}
\newcommand{\Om}{\Omega}
\newcommand{\supp}{\operatorname{supp}}
\newcommand{\dist}{\operatorname{dist}}
\newcommand{\dom}{\operatorname{dom}}
\newcommand{\h}{\mathcal{H}}
\newenvironment{enuma}{\begin{enumerate}[(a)]}{\end{enumerate}}
\newenvironment{enumi}{\begin{enumerate}[(i)]}{\end{enumerate}}
\renewenvironment{cases}[1][l]{\matrix@check\cases\env@cases{#1}}{\endarray\right.}
\def\env@cases#1{%
  \let\@ifnextchar\new@ifnextchar
  \left\lbrace\def\arraystretch{1.2}%
  \array{@{}#1@{\quad}l@{}}}
\newcommand{\arg@parser}[1]{%
  \advance\arg@count\@ne
  \expandafter\let\csname arg\romannumeral\arg@count\endcsname\comma@entry
}
\newcommand\res[1]{
  \arg@count=\z@
  \comma@parse{ \lambda,A }\arg@parser 
  \arg@count=\z@
  \comma@parse{#1}\arg@parser
  \ifnum\arg@count>2 %
    \@latex@error{Too many arguments}{%
      The macro \string\mycmd\space got \the\arg@count\space
       arguments,\MessageBreak
      but expected are 2 arguments.\MessageBreak
      \@ehd
    }%
  \fi
  \edef\process@me{%
    \noexpand\@res
    {\etex@unexpanded\expandafter{\argi}}%
    {\etex@unexpanded\expandafter{\argii}}%
  }%
  \process@me
}
\newcommand{\@res}[2]{%
  \ensuremath\left( #1 - #2 \right)^{-1}
}
\newcommand\p[1]{
  \arg@count=\z@
  \comma@parse{  }\arg@parser 
  \arg@count=\z@
  \comma@parse{#1}\arg@parser
  \ifnum\arg@count=2 %
  \else
    \@latex@error{Wrong number of mandatory arguments}{%
      The macro \string\p\space got \the\numexpr\arg@count-2\relax\space
      mandatory arguments,\MessageBreak
      but expected are 3 mandatory arguments.\MessageBreak
      \@ehd
    }%
  \fi
  \edef\process@me{%
    \noexpand\@p
    {\etex@unexpanded\expandafter{\argi}}%
    {\etex@unexpanded\expandafter{\argii}}%
  }%
  \process@me
}
\newcommand{\@p}[2]{%
  \ensuremath \left\langle #1 , #2 \right\rangle
}
\numberwithin{equation}{section}
\theoremstyle{definition}
\newtheorem{de}{Definition}[section]
\theoremstyle{plain}
\newtheorem{prop}[de]{Proposition}
\newtheorem{lemma}[de]{Lemma}
\newtheorem{theorem}[de]{Theorem}
\newtheorem{corollary}[de]{Corollary}
\theoremstyle{remark}
\newtheorem{remark}[de]{Remark}
\newtheorem{example}[de]{Example}
\newcommand{\SCI}{\operatorname{SCI}}
\renewcommand{\i}{\mathrm{i}}
\newcommand{\diag}{\operatorname{diag}}
\newcommand{\cU}{\mathcal{U}}
\newcommand{\nf}{\nicefrac}
\newcommand{\cof}{\operatorname{cof}}
\title{Universal algorithms for computing spectra of periodic operators}
\author{Jonathan Ben-Artzi}
\email{Ben-ArtziJ@cardiff.ac.uk}
\author{Marco Marletta}
\email{MarlettaM@cardiff.ac.uk}
\author{Frank R\"{o}sler}
\email{RoslerF@cardiff.ac.uk}
\thanks{JBA  acknowledges support from an Engineering and Physical Sciences Research Council Fellowship EP/N020154/1. MM acknowledges support from an Engineering and Physical Sciences Research Council Grant EP/T000902/1. FR acknowledges support from the European Union's Horizon 2020 Research and Innovation Programme under the Marie Sklodowska-Curie grant agreement No 885904.}
\address{School of Mathematics, Cardiff University, Senghennydd Road, Cardiff CF24 4AG, Wales, UK}
\date\today
\keywords{Spectrum of periodic operator, Spectrum of Schr\"odinger operator, Solvability Complexity Index, Computational complexity}
\subjclass[2010]{35J10, 47N40,  68Q25, 35P05, 81Q10}
\begin{document}
\maketitle

\begin{abstract}
Schr\"odinger operators with periodic (possibly complex-valued) potentials and discrete periodic  operators (possibly with complex-valued entries) are considered, and in both cases the computational spectral problem is investigated: namely, under what conditions can a `one-size-fits-all' algorithm for computing their spectra be devised? It is shown that for periodic banded matrices this can be done, as well as for Schr\"odinger operators with periodic potentials that are sufficiently smooth. In both cases implementable algorithms are provided, along with examples. For certain Schr\"odinger operators whose potentials may diverge at a single point (but are otherwise well-behaved)  it is shown that there does not exist such an algorithm, though it is shown that the computation is possible if one allows for two successive limits.
\end{abstract}
%
%

\section{Introduction and Main Results}
We study the  computational spectral problem for periodic  discrete operators, acting in $\ell^2(\Z)$, as well as Schr\"odinger operators with  periodic potentials acting in $L^2(\R^d)$.  We show that it is possible to compute their respective spectra as limits of finite-dimensional approximations. However, in the Schr\"odinger case this becomes \emph{im}possible if the potential is allowed to be discontinuous at a \emph{single} point (but otherwise it is  smooth). More precisely, we prove:

\begin{theorem}[Periodic banded matrices]\label{thm:main1}
Let $B(\ell^2(\Z))$ denote the set of bounded operators on $\ell^2(\Z)$ and let $\{e_i\}_{i\in\Z}$ be a basis for $\ell^2(\Z)$. Let $\Omega^{\mathrm{per}}\subset B(\ell^2(\Z))$ be the class of banded, periodic operators with respect to the basis $\{e_i\}_{i\in\Z}$ and $\Omega^{\mathrm{per}}_{N,b}$ the subset of matrices with period $N$ and bandwidth $b$. Then 
\begin{enumi}
	\item
	there exists an algorithm that can compute the spectrum $\sigma(A)$ of any $A\in\Omega^{\mathrm{per}}$ as the limit of a sequence computable approximations;
	\item
	there exists an algorithm that can compute the spectrum $\sigma(A)$ of any $A\in\Omega^{\mathrm{per}}_{N,b}$ with guaranteed error bounds.
\end{enumi}
\end{theorem}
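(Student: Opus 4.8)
The plan is to reduce both statements to a compact family of finite-dimensional eigenvalue problems via the Floquet--Bloch (Gelfand) transform, so the first step is to set that up. If $A\in\Omega^{\mathrm{per}}$ has period $N$ and bandwidth $b$, I would split $\Z$ into residue classes modulo $N$ and apply the discrete Fourier transform in the coarse variable; this identifies $A$ unitarily with a direct integral $\int_{[0,2\pi)}^{\oplus}A_N(\theta)\,d\theta$ of $N\times N$ matrices whose entries are the explicit finite trigonometric sums $A_N(\theta)_{rr'}=\sum_{s\in\Z}\langle Ae_{r'+Ns},e_r\rangle\,e^{\i Ns\theta}$. Hence $\sigma(A)=\overline{\bigcup_{\theta}\sigma(A_N(\theta))}$, and because $\theta\mapsto\sigma(A_N(\theta))$ is continuous into the compact subsets of $\C$ while $[0,2\pi]$ is compact, the union is already closed: $\sigma(A)=\bigcup_{\theta\in[0,2\pi)}\sigma(A_N(\theta))$. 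This reduces everything to a compact family of finite eigenvalue problems whose coefficients are assembled from finitely many entries of $A$, and I would use as a black box the (standard) fact that the spectrum of one finite matrix can be computed with rigorous error control from inexact entries.

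For (i), neither $N$ nor $b$ is supplied, so the algorithm must first \emph{detect} them. At stage $n$ I would inspect all entries $\langle Ae_j,e_i\rangle$ with $|i|,|j|\le n$, set $b_n$ to be the largest $|i-j|$ at which a nonzero entry has so far been seen, and set $N_n$ to be the smallest positive integer for which every shift-by-$N_n$ identity checkable inside the window holds. Periodicity and bandedness guarantee $b_n=b$ and $N_n=N$ for all $n$ past a threshold, and for such $n$ the window pins down one full period of $A$ with enough overlap to form the polynomials $A_{N_n}(\cdot)$. I would then define $\Gamma_n(A)$ as the union over the grid $\theta_k=2\pi k/n$, $k=0,\dots,n-1$, of $1/n$-accurate approximations to $\sigma(A_{N_n}(\theta_k))$. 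Each $\Gamma_n(A)$ uses finitely many entry evaluations and finitely many arithmetic operations; once $(N_n,b_n)$ has stabilised, $\Gamma_n(A)$ is a $1/n$-accurate rendering of a union over an $n$-point grid of the closed set $\sigma(A)$, so $\Gamma_n(A)\to\sigma(A)$ in the Attouch--Wets metric by uniform continuity of $\theta\mapsto\sigma(A_N(\theta))$. That proves (i).

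For (ii), $N$ and $b$ are part of the data, so $A_N(\cdot)$ and the bound $\|A_N(\theta)\|\le(2b+1)\max\{|\langle Ae_j,e_i\rangle|:|i-j|\le b,\ 0\le i<N\}=:M$ are available to arbitrary precision. The remaining work is to make the $\theta$-discretisation error explicit. Here I would invoke an Elsner-type eigenvalue perturbation estimate, $\operatorname{dist}_H(\sigma(A_N(\theta)),\sigma(A_N(\theta')))\le(2M)^{1-1/N}\|A_N(\theta)-A_N(\theta')\|^{1/N}$, together with the elementary Lipschitz bound $\|A_N(\theta)-A_N(\theta')\|\le L|\theta-\theta'|$ with $L:=N\sum_{r,r',s}|s|\,|\langle Ae_{r'+Ns},e_r\rangle|$, both constants being computable from the data; evaluating $\sigma(A_N(\theta_k))$ on an $m$-point grid, each to rigorous accuracy $\delta$, then yields a set within Hausdorff distance $(2M)^{1-1/N}(\pi L/m)^{1/N}+\delta$ of $\sigma(A)$, and choosing $m,\delta$ gives the guaranteed error bound. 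I expect the main obstacles to be: in (i), verifying that the window-based detection of $(N,b)$ really does stabilise and that the scheme uses only finitely much information per stage while still converging; and in (ii), obtaining the modulus of continuity with a fully explicit constant --- the H\"older exponent $1/N$ in place of $1$ is genuinely necessary at parameters where $A_N(\theta)$ is not diagonalisable --- and carefully propagating the inexactness of the input entries through the assembly of $A_N(\cdot)$, the grid, and the finite-matrix eigenvalue computation.
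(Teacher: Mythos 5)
Your proposal is correct in outline and shares the paper's skeleton: Floquet--Bloch reduction to an $N\times N$ fibre matrix $A(\theta)$ with $\sigma(A)=\bigcup_{\theta}\sigma(A(\theta))$, a finite $\theta$-grid, and, for part (i), window-based detection of the period and bandwidth followed by an appeal to the error-controlled algorithm for fixed $(N,b)$ --- this is exactly the paper's Pseudocode~\ref{alg:pseudocode} (the paper extends the detected window periodically to get a bona fide element of $\Omega^{\mathrm{per}}_{N,b}$ even before stabilisation, but that is cosmetic; also note the paper works in the Hausdorff metric, equivalent to Attouch--Wets here since spectra are compact). Where you genuinely diverge is the quantitative core of part (ii). The paper never computes eigenvalues of the fibres at all: it thresholds $|\det(A(\theta_i)-zI)|\le n^{-1/2}$ on a joint $(z,\theta)$-grid, proving spectral inclusion from above via an explicit Lipschitz bound on $(z,\theta)\mapsto\det(A(\theta)-zI)$ (Jacobi's formula plus Hadamard's inequality, Lemma~\ref{lemma:claim}) and inclusion from below via the factorisation bound $|\det(zI-A(\theta))|\ge\dist(z,\sigma(A(\theta)))^{N}$. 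You instead treat the finite eigenvalue problem as a validated black box and control the $\theta$-discretisation by Elsner's theorem, giving the H\"older-$1/N$ modulus of continuity of $\theta\mapsto\sigma(A_N(\theta))$; note that the $N$-th root you correctly insist on is the same phenomenon as the paper's exponent in $B_{n^{-1/(2N)}}(\sigma(A))$. Your route is perfectly viable and arguably more modular, but the one step you must not leave as folklore is the black box itself: in the arithmetic-algorithm model of Definition~\ref{def:Algorithm} you need an explicit procedure, using finitely many arithmetic operations, that returns $\sigma(M)$ of an $N\times N$ matrix to prescribed accuracy --- and the standard way to supply it is precisely the paper's device (evaluate the characteristic polynomial on a $z$-grid, use its Lipschitz constant for the upper inclusion and $|p(z)|\ge\dist(z,\text{roots})^{N}$ for the lower one), or an equivalent validated root-finder; Elsner conveniently also absorbs the inexactness of the assembled entries of $A_N(\theta)$. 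With that ingredient made explicit, both of your error constants are computable from the data and the proof closes; the advantage of the paper's formulation is that it bypasses Elsner and any eigensolver, while yours separates the continuity-in-$\theta$ analysis cleanly from the finite-dimensional computation.
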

\begin{theorem}[Schr\"odinger: good case]\label{th:main2}
For $d\in\N$ define the class of potentials
\begin{align*}
	\Omega^{\mathrm{Sch}} &:= \{ V:\R^d\to\C \,|\, V \text{ is 1-periodic and }V|_{(0,1)^d}\in W^{1,p}((0,1)^d) \text{ for some }p>d \},
\end{align*}
and given $M>0$ and $p>d$  define the class
\begin{align*}
	\Om^{\mathrm{Sch}}_{p,M} &:= \{V\in\Om^{\mathrm{Sch}}\,|\, \|V\|_{W^{1,p}((0,1)^d)}\leq M\}.
\end{align*}
Then
	\begin{enumi}
	\item
	there exists an algorithm that can compute the spectrum $\sigma(H)$ of any Schr\"odinger operator  $H=-\Delta+V$ with $V\in\Omega^{\mathrm{Sch}}$ as the limit of a sequence computable approximations;
	\item
	moreover, for $V\in\Om^{\mathrm{Sch}}_{p,M}$ this algorithm yields  spectral inclusion with guaranteed error bounds.
	\end{enumi}
\end{theorem}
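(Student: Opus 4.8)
The plan is to reduce the computation, via Floquet--Bloch theory, to a one-parameter family of fibre operators with purely discrete spectrum, and to detect the spectrum of each fibre through its \emph{injection modulus} (smallest singular value), a quantity that can be approximated from genuinely computable finite Fourier sections; the hypothesis $p>d$ will be used only to make the error estimates in part~(ii) quantitative in $M$. Concretely, write $H_\theta:=(-\i\nabla+\theta)^2+V$ on $L^2((0,1)^d)$ with periodic boundary conditions, so that $\sigma(H)=\overline{\bigcup_{\theta\in[0,2\pi]^d}\sigma(H_\theta)}$. Since $W^{1,p}((0,1)^d)\hookrightarrow L^\infty$ for $p>d$, the potential is a bounded multiplication operator, hence $\dom H_\theta=H^2_{\mathrm{per}}$ and $(H_\theta-z)^{-1}$ is compact; thus each $H_\theta$ has purely discrete spectrum, only finitely many Bloch bands meet any disc, uniformly in $\theta$ (so the union above is in fact locally closed), and $H_\theta-z$ is Fredholm of index $0$. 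The detecting function is $\gamma(z,\theta):=\inf_{\|u\|=1,\,u\in\dom H_\theta}\|(H_\theta-z)u\|$; Fredholmness gives $\gamma(z,\theta)=0$ iff $z\in\sigma(H_\theta)$, and since its minimising vectors stay $H^2$-bounded on bounded $(z,\theta)$-sets, $\gamma$ is (Lipschitz) continuous there, so $\Gamma(z):=\min_{\theta}\gamma(z,\theta)$ is continuous with $\{\Gamma=0\}=\sigma(H)$.

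\emph{Computable finite sections.} In the Fourier basis $\{e^{2\pi\i k\cdot x}\}_{k\in\Z^d}$ the operator $(-\i\nabla+\theta)^2$ is diagonal with entries $|2\pi k+\theta|^2$ and $M_V$ is convolution by $\widehat V$. Let $P_n$ project onto $\{|k|_\infty\le n\}$ and set $\gamma_n(z,\theta):=\min_{0\ne u\in P_nL^2}\|(H_\theta-z)u\|/\|u\|$. Then $\gamma_n(z,\theta)^2$ is the smallest eigenvalue of the finite positive Hermitian matrix $\big(\langle(H_\theta-z)e_l,(H_\theta-z)e_k\rangle\big)_{|k|_\infty,|l|_\infty\le n}$, whose entries involve only $z$, the numbers $|2\pi k+\theta|^2$ and the Fourier coefficients of $V$, $\overline V$ and $|V|^2$ up to order $2n$ --- in particular the untruncated output norm $\|(H_\theta-z)u\|$ is recovered exactly through the $|V|^2$-coefficients, so no secondary truncation is incurred. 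Each such coefficient is computable to arbitrary precision from point evaluations of $V$ by quadrature (here we again use that $p>d$ makes $V$ continuous with controllable quadrature error), so $\gamma_n(z,\theta)$ is computable. Moreover $P_nL^2\subset\dom H_\theta$ forces $\gamma_n(z,\theta)\ge\gamma(z,\theta)$. The algorithm fixes a quasimomentum grid $G_n\subset[0,2\pi]^d$ of mesh $\to0$, a $\tfrac1n$-net of $\overline{B(0,R_n)}$ with $R_n\to\infty$ slowly, a tolerance $\epsilon_n\to0$, and outputs the finite set $\Gamma_n:=\{z\text{ in the net}:\min_{\theta\in G_n}\gamma_n(z,\theta)\le\epsilon_n\}$.

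\emph{Convergence (i) and error bounds (ii).} \emph{No pollution:} $\gamma_n\ge\gamma$ gives $\Gamma_n\subseteq\{z:\Gamma(z)\le\epsilon_n\}$, and since $\Gamma$ is continuous these sublevel sets decrease to $\{\Gamma=0\}=\sigma(H)$. \emph{No loss:} let $\lambda\in\sigma(H)$, say $\lambda\in\sigma(H_{\theta^\ast})$ with eigenfunction $\psi$, $\|\psi\|=1$; pick $\theta\in G_n$ nearest $\theta^\ast$ and a net point $z$ near $\lambda$, and test the Rayleigh quotient for $\gamma_n(z,\theta)$ against $P_n\psi$. Writing $(H_\theta-z)P_n\psi=(H_\theta-H_{\theta^\ast})P_n\psi+(H_{\theta^\ast}-z)P_n\psi$ and using that $(-\i\nabla+\theta^\ast)^2$ commutes with $P_n$, one bounds the residual by $|\lambda-z|$, plus $|\theta-\theta^\ast|\,\|\nabla\psi\|$, plus $\|(\id-P_n)\psi\|$ and $\|(\id-P_n)\big((-\i\nabla+\theta^\ast)^2-z\big)\psi\|$. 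An elliptic estimate bounds $\|\psi\|_{H^1}$, and elliptic regularity turns the $W^{1,p}$-regularity of $V$ into (piecewise) higher Sobolev regularity of $\psi$, so the last two terms are bounded by an explicit negative power of $n$; hence $\min_{\theta\in G_n}\gamma_n(z,\theta)$ is small and $\lambda$ lies within $o(1)$ of $\Gamma_n$. Choosing $\epsilon_n\to0$ slowly enough gives $\Gamma_n\to\sigma(H)$, proving~(i). For $V\in\Om^{\mathrm{Sch}}_{p,M}$ the Sobolev constant in $\|V\|_\infty\le C_{p,d}M$ and all the decay rates above become explicit in $(M,p,d)$, so the same argument yields a computable $\epsilon_n'\to0$ with $\sigma(H)\subseteq\Gamma_n+\overline{B(0,\epsilon_n')}$, proving~(ii).

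\emph{The main obstacle} is the absence of self-adjointness: for complex-valued $V$ the fibres $H_\theta$ are non-normal, so no variational principle is available for $\sigma(H_\theta)$ and, more seriously, the $\epsilon$-pseudospectra of $H_\theta$ need not lie in a shrinking neighbourhood of $\sigma(H_\theta)$. This is exactly why~(ii) is formulated as one-sided inclusion rather than a two-sided Hausdorff bound: the no-spectral-loss direction is quantitative because eigenfunctions inherit the Sobolev regularity of $V$, and $p>d$ is the sharp threshold that makes $V$ (essentially) continuous --- needed both for the quadrature and for the elliptic bootstrap --- while keeping every Fourier-tail estimate controlled by $\|V\|_{W^{1,p}}\le M$; a matching quantitative control of spectral pollution would instead require uniform resolvent estimates for the $H_\theta$, which are not available at this level of generality. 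A final technical point to be handled with care is that the periodic extension of $V$ may jump across $\partial(0,1)^d$, so eigenfunctions are only piecewise smooth and their Fourier coefficients decay merely polynomially; tracking this polynomial rate is routine but unavoidable.
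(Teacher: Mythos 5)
Your proposal is correct in substance, but it follows a genuinely different route from the paper. Both arguments start from the same Floquet--Bloch reduction $\sigma(H)=\bigcup_{\theta}\sigma(H(\theta))$; the paper then applies a Birman--Schwinger factorization $H(\theta)=H_0+B(\theta)$ with $H_0=-\Delta+1$, works with the Schatten-class operator $K(\lambda,\theta)=H_0^{-1/2}B(\theta)H_0^{1/2}(\lambda-H_0)^{-1}$, and detects spectrum through smallness of the regularized determinant $\det\nolimits_{\lceil p\rceil}\big(I-P_NK^{\mathrm{appr}}_{n(N)}P_N\big)$, using Simon's Lipschitz bound for perturbation determinants together with an explicit finite-section/quadrature error bound; since this detector is blind on $\sigma(H_0)$, a second decomposition with $\tilde H_0=-\Delta+2$ and a covering of $\C$ by unit boxes are also needed. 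You instead detect $\sigma(H(\theta))$ via the injection modulus $\gamma(z,\theta)$ and its \emph{uncompressed} finite sections (the Gram matrix built from $\widehat V$, $\widehat{\overline V}$ and $\widehat{|V|^2}$), which buys you the one-sided inequality $\gamma_n\geq\gamma$ for free (hence no pollution without any determinant machinery) and no blind spots, so no second decomposition or case analysis near $\sigma(H_0)$ is required. The price is that your quantitative ``no loss'' direction for part (ii) runs through eigenfunction regularity: you must upgrade $V\in W^{1,p}$ to a global $H^{2+s}$ bound for Bloch eigenfunctions (only $s<\frac12$ is available, precisely because the periodic extension of $V$ may jump across the cell boundary) with constants explicit in $(M,p,d)$, whereas the paper's error constants come directly from operator-norm and Schatten estimates on $K$ and never touch eigenfunctions. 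Two points you gloss over need care, though both admit routine repairs. First, in part (i) neither $p$ nor $\|V\|_{W^{1,p}}$ is available to the algorithm, so the threshold $\epsilon_n$ and the number of quadrature nodes must be fixed universally (e.g.\ $\epsilon_n=1/\log n$ and $e^{n}$ nodes, which is exactly the device the paper uses to remove the a priori knowledge of $p$); ``choosing $\epsilon_n\to0$ slowly enough'' must not secretly depend on $V$. Second, $\gamma_n(z,\theta)$ is not exactly computable in finitely many arithmetic operations, but the test $\gamma_n(z,\theta)\leq\epsilon_n$ is (a Sylvester-type positivity test on the shifted Gram matrix), provided the quadrature error in the matrix entries is absorbed into the threshold; with these adjustments your argument goes through and yields the same $\Delta_2$ and $\Pi_1$ classifications as the paper.
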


\begin{theorem}[Schr\"odinger: bad case]\label{thm:main3}
Let $x_0\in[0,1]$ and let
\begin{equation*}
	\Om^{\mathrm{Sch}}_{x_0} := \left\{ V:\R\to\R \,\middle|\, V \text{ is 1-periodic, } V(x_0)=0 \text{ and }V|_{[0,1]}\in L^2\big([0,1]\big)\cap C^\infty\big([0,1]\setminus\{x_0\}\big) \right\}.
\end{equation*}
Then there \underline{does not} exist an algorithm that can compute the spectrum $\sigma(H)$ of any Schr\"odinger operator  $H=-\Delta+V$ with $V\in\Omega^{\mathrm{Sch}}_{x_0}$ as the limit of a sequence computable approximations. However, there \underline{does} exist an algorithm that can compute $\sigma(H)$ by taking \underline{two} successive  limits.
\end{theorem}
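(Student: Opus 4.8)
\emph{Structure of the proof.} The statement splits into an algorithmic (positive) half and an impossibility (negative) half, and I would treat them separately, the first being a short reduction to Theorem~\ref{th:main2} and the second requiring the Solvability Complexity Index lower‑bound machinery.

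\emph{The two successive limits.} The plan is to approximate a singular potential by smooth potentials that are \emph{flat near} $x_0$, and then invoke Theorem~\ref{th:main2}. Given $V\in\Om^{\mathrm{Sch}}_{x_0}$ and $m\in\N$, fix a $1$‑periodic $C^\infty$ cut‑off $\chi_m$ with $\chi_m\equiv1$ on $[0,1]\setminus(x_0-\tfrac1m,x_0+\tfrac1m)$ and $\chi_m\equiv0$ on $(x_0-\tfrac1{2m},x_0+\tfrac1{2m})$, and set $V_m:=V\chi_m$. Then $V_m$ vanishes near $x_0$ and is otherwise a product of functions smooth off $x_0$, so $V_m\in C^\infty(\R)$, in particular $V_m\in\Om^{\mathrm{Sch}}$ (with an uncontrolled $W^{1,p}$‑norm, which does not matter for part (i) of Theorem~\ref{th:main2}). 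Since $\|V-V_m\|_{L^2(0,1)}\le\|V\|_{L^2((x_0-1/m,\,x_0+1/m))}\to0$ by absolute continuity of the Lebesgue integral, and since $V\in L^2(0,1)\subset L^1(0,1)$ is infinitesimally $(-\Delta)$‑form bounded with a bound controlled uniformly along the sequence, one gets norm–resolvent convergence $-\Delta+V_m\to-\Delta+V$ on $\LR$, hence $\sigma(-\Delta+V_m)\to\sigma(-\Delta+V)$ with no spectral pollution. Feeding $V_m$ — whose point values $V_m(q)=V(q)\chi_m(q)$ are computable from the oracle for $V$ — into the algorithm $\Gamma^{(2)}$ of Theorem~\ref{th:main2}(i) gives $\lim_{n\to\infty}\Gamma^{(2)}_n(V_m)=\sigma(-\Delta+V_m)$, so $(m,n)\mapsto\Gamma^{(2)}_n(V_m)$ is a height‑two tower converging to $\sigma(-\Delta+V)$ under $\lim_m\lim_n$. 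The only things to verify with care are the uniform form‑boundedness along the sequence and that the evaluations used respect the formalism; both are routine.

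\emph{Why one limit is not enough.} Suppose for contradiction that $(\Gamma_n)_n$ is a sequence of arithmetic algorithms, each reading finitely many (adaptively chosen) values of its input potential, with $\Gamma_n(W)\to\sigma(-\Delta+W)$ for every $W\in\Om^{\mathrm{Sch}}_{x_0}$. I would build a single adversarial $V\in\Om^{\mathrm{Sch}}_{x_0}$ on which $(\Gamma_n(V))_n$ fails to be Cauchy. The mechanism exploits that near $x_0$ a member of $\Om^{\mathrm{Sch}}_{x_0}$ is essentially unconstrained — any profile of finite $L^2$‑norm on $(x_0,x_0+\delta)$ is admissible, only the single value $V(x_0)=0$ being pinned — while nothing in the data available to $\Gamma_n$ (no modulus of continuity, no norm) reveals what happens there between the finitely many sampled points. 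One constructs $V$ inductively on scales $\delta_1>\delta_2>\cdots\to0$ with pairwise disjoint slots $(x_0+\delta_{j+1},x_0+2\delta_{j+1})$: at stage $j$ run $\Gamma_1,\dots,\Gamma_j$ on the potential built so far, pick $\delta_j$ small enough that its slot misses every queried point, and fill the slot so as to steer $\Gamma_j$ toward one of two sets $\Sigma_0,\Sigma_1$ fixed in advance with $\dist(\Sigma_0,\Sigma_1)\ge\kappa>0$ — for instance $\Sigma_0=[0,\infty)$ (no gap) and $\Sigma_1$ the spectrum of an operator with a genuine Kronig–Penney‑type gap produced by a narrow feature near $x_0$. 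Alternating the target with the parity of $j$ then forces $\Gamma_j(V)$ near $\Sigma_0$ for infinitely many $j$ and near $\Sigma_1$ for infinitely many $j$. Finally one checks that the limiting $V$ is legitimate: $C^\infty$ off $x_0$ (each $x\ne x_0$ sees only finitely many inserted pieces), $V(x_0)=0$, and $\sum_j\|\text{piece}_j\|_{L^2}^2<\infty$ so that $V|_{[0,1]}\in L^2(0,1)$.

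\emph{The main obstacle.} The genuinely delicate point — and the heart of the proof — is the tension between the two demands on the inserted pieces: they must sit in ever‑shrinking slots (to stay invisible to the already‑fixed algorithms), yet the finished $V$ must lie in $L^2(0,1)$, which forbids retaining a fixed amount of spectral influence in those slots forever. The resolution is that one never needs $\sigma(-\Delta+V)$ itself to keep a macroscopic gap; one only needs each finitely truncated potential — exactly what $\Gamma_1,\dots,\Gamma_{j-1}$ effectively see at stage $j$ — to be $\kappa$‑separated in spectrum from the competing truncation of opposite parity, and one exploits that an algorithm with no access to a modulus of continuity (equivalently, no $W^{1,p}$‑bound of the kind Theorem~\ref{th:main2} does have and which fails here) cannot tell such a truncation from a far more singular extension of it. Organising this so that the parity alternation survives into the limit — i.e. verifying the hypotheses of the abstract $\SCI\ge2$ criterion for this computational problem — is, I expect, the longest and most technical step; by contrast the reduction in the positive half, the resolvent/spectral convergence, and the $L^2$‑bookkeeping are standard.
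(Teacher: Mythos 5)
Your positive (two successive limits) half follows essentially the same route as the paper: multiply $V$ by a computable cutoff that vanishes in a shrinking neighbourhood of $x_0$, feed the regularized, now $W^{1,p}$-on-the-cell potential into the algorithm of Theorem \ref{th:main2}, and pass to the limit in the cutoff parameter using norm-resolvent convergence driven by $\|(1-\chi_m)V\|_{L^2((0,1))}\to 0$ together with the per-cell $H^1\hookrightarrow L^\infty$ estimate; this is exactly the paper's Lemma \ref{lemma:spectral_convergence}, and your sketch (uniform relative form bound plus $L^2$-smallness of the difference) is the same argument in compressed form. The only minor caveats are that the cutoff values must themselves be exactly computable (the paper takes a piecewise linear cutoff for this reason) and that ``$V_m\in C^\infty(\R)$'' should be weakened to $V_m|_{(0,1)}\in W^{1,p}$, which is all that membership in $\Omega^{\mathrm{Sch}}$ requires.

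The impossibility half, however, has a genuine gap. You propose, at stage $j$, to run $\Gamma_j$ on the potential built so far, choose a slot missing every queried point, and then ``fill the slot so as to steer $\Gamma_j$'' toward $\Sigma_0$ or $\Sigma_1$. By the very consistency property of algorithms you are relying on, a modification supported away from the queried points leaves $\Gamma_j$'s output unchanged, so it cannot be steered by data it never reads; and for a \emph{fixed} index $j$ the hypothesis $\Gamma_n(W)\to\sigma(-\Delta+W)$ says nothing about $\Gamma_j(W_{j-1})$. The missing idea --- the engine of the paper's argument --- is to invoke the assumed convergence on each \emph{intermediate, in-class} potential \emph{after} installing a distinctive spectral feature, in order to select a new large index $n_k$ whose output certifiably reflects that feature (in the paper: a deep well near $x_0$ forces $\inf\sigma\le -1$ by Lemma \ref{lemma:upper_bound_W}, so some $\Gamma_{n_k}$ eventually returns points with real part below $-\tfrac12$), and only then to erase the feature at all points unseen by $\Gamma_{n_k}$ (thinning the well to tiny bumps at the finitely many queried points, so that Lemma \ref{lemma:numerical_range} pushes the true spectrum of every later potential, and of the limit, back above $-\tfrac14$ while $\Gamma_{n_k}$'s output is frozen). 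Without this ``install a feature, wait for detection, then hide it'' loop, nothing forces the alternation $\Gamma_j(V)\approx\Sigma_{j\bmod 2}$ that your contradiction needs. Relatedly, your choice of targets --- a band gap of fixed width $\kappa$ generated by features confined to slots shrinking to $x_0$ with square-summable $L^2$ norms --- runs into exactly the obstruction you flag yourself: perturbations that are small in $L^1$ per cell move the spectrum only slightly (this is precisely the content of Lemma \ref{lemma:numerical_range} and of the norm-resolvent estimate), so the $\kappa$-separation cannot persist into the limit, and your proposed fix (comparing truncations of opposite parity and appealing to an unspecified abstract $\SCI\ge 2$ criterion) again requires $\Gamma_j(\text{truncation})$ to be close to that truncation's spectrum for the particular index $j$ used, which nothing guarantees. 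The paper sidesteps all of this by using a one-sided quantity, the bottom of the numerical range, which is very negative for the intermediate trap potentials but close to $0$ for the final potential, so no spectral feature needs to survive in the limiting $V$ at all.
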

Below, in Section \ref{sec:sci}, we give precise definitions of what an `algorithm' is,what information is available to it, how it computes, and hence what we mean when we say `computable'.
Informally, Theorems \ref{thm:main1} and \ref{th:main2} imply that these computations can be performed numerically. In fact, we provide  actual algorithms which can access the matrix entries (in   Theorem \ref{thm:main1}) and pointwise evaluations of the potential (in Theorem \ref{th:main2}). This should be contrasted with Theorem  \ref{thm:main3} which implies that it is \emph{impossible} to devise an algorithm that can compute the spectrum of \emph{any} Schr\"odinger operator with a potential belonging to $\Omega^{\mathrm{Sch}}_{x_0}$. We prove this by contradiction: assuming the existence of such an algorithm, we explicitly construct a  potential $V\in \Omega^{\mathrm{Sch}}_{x_0}$ for which this algorithm would fail in its attempt to compute the spectrum.

These statements are nontrivial. The existence of a `one-size-fits-all' algorithm as in Theorems  \ref{thm:main1} and \ref{th:main2} is not obvious: while there are techniques for computing the spectrum of a given operator in each of the above cases -- these are indeed well-studied problems -- here we prove the existence of a single algorithm (which can be coded) that can handle any input from the given class, without any additional \emph{a priori} information. Moreover, in the cases of $\Omega^{\mathrm{per}}_{N,b}$ and $\Om^{\mathrm{Sch}}_{p,M}$ there are even guaranteed error bounds.
Conversely, the non-existence result of Theorem \ref{thm:main3} is also not obvious: we prove that \emph{regardless} of what operations are allowed, as long as the algorithm can only read a finite amount of information at each iteration, there will \emph{necessarily} be a potential for which the computation will fail.
In the spirit of the \emph{Solvability Complexity Theory} (see a brief discussion below in Subsection \ref{subsec:sci}, with more details in Section \ref{sec:sci}) we show that if one allows for two successive limits (which cannot be collapsed to a single limit), the computation is possible.
Note that although the class of potentials $\Omega^{\mathrm{Sch}}_{x_0}$ allows for a blowup near $x_0$, all potentials in this class are integrable over compact sets, and so $x_0$ is still a regular point
for the differential equation \cite[footnote, p.67]{MR0069338}; elsewhere the potentials are even more well-behaved.

\subsection{The Solvability Complexity Index Hierarchy}
\label{subsec:sci}
Our exploration of the spectral computational problem -- for both discrete and Schr\"odinger operators -- continues a line of research initiated by Hansen in \cite{Hansen11} and then further expanded in \cite{Ben-Artzi2015a,AHS}. This sequence of papers established the so-called \emph{Solvability Complexity Index (SCI) Hierarchy}, which is a classification of the computational complexity of problems that cannot be computed in finite time, only approximated. Recent years have seen a flurry of activity in this direction. We point out \cite{Colbrook2019,Colbrook2019a} where some of the theory of spectral computations has been further developed; \cite{R19}  where this has been applied to certain classes of unbounded operators; \cite{Becker2020b} where solutions of PDEs were considered; \cite{BMR2020,Ben-Artzi2020a} where we considered resonance problems; and \cite{Colbrook2019c} where the authors give further examples of how to perform certain spectral computations with error bounds.

While  precise definitions  are provided below in Section \ref{sec:sci}, let us provide an informal overview. Computational problems can be classed as belonging to one (or more) of $\Delta_k,\,\Sigma_k,\,\Pi_k$ for $k\in\N$ as follows:
	\begin{itemize}
	\item[$\Delta_k:$]
	For $k\geq2$, $\Delta_k$ is the class of problems that require  at most $k-1$ successive limits to solve. We also say that these problem have an $\SCI$ value of at most $k-1$. Problems in $\Delta_1$ can be solved in one limit with known error bounds.
	\item[$\Sigma_k:$]
	For all $k\in\N$, $\Sigma_k\subset\Delta_{k+1}$ is the class of problems in $\Delta_{k+1}$ that can be approximated from ``below'' with known error bounds. 
	\item[$\Pi_k:$]
	For all $k\in\N$, $\Pi_k\subset\Delta_{k+1}$ is the class of problems in $\Delta_{k+1}$ that can be approximated from ``above'' with known error bounds.
	\end{itemize}
By an approximation from ``above''  (resp. ``below'') we mean that the output of the algorithm is a superset (resp. subset) of the object we are computing (this clearly requires that this object and its approximations  belong to a certain  topological space). It can also be shown that for $k\in\{1,2,3\}$ we have $\Delta_k=\Sigma_k\cap\Pi_k$.

\begin{figure}[H]
\centering
\begin{tikzpicture}
\draw (0,0) circle (1) (-0.3,.9)  node [text=black,above] {$\Sigma_k$}
      (1,0) circle (1) (1.3,.9)  node [text=black,above] {$\Pi_k$}
      (.5,0) circle (1.95) (1.5,1.9) node [text=black,above] {$\Delta_{k+1}=\{\SCI\leq k\}$}
       (.5,-0.2) node [text=black,above] {$\Delta_{k}$}
       (4.3,0.4) node [text=black,above] {$=\{\SCI\leq k-1\}$};
\draw[->, dash dot] (2.9,0.7) -- (.6,0.4);
\end{tikzpicture}
\caption{The $\SCI$ Hierarchy for $k\in\{1,2,3\}$.}
\label{fig:sci-hir}
\end{figure}
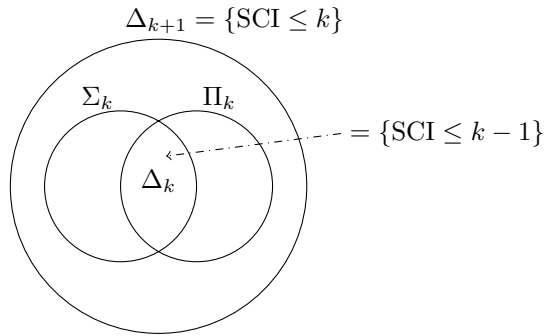

In \cite{AHS} the spectral computational problems for both $B(\ell^2(\N))$ and for Schr\"odinger operators were addressed. Some of the results shown there include\\

	\begin{tabularx}{\textwidth}{ll}
	approximating $\sigma(A)$ for $A\in B(\ell^2(\N))$&$\in\Pi_3\setminus\Delta_3$\\
	approximating $\sigma(A)$ for $A\in \{B\in B(\ell^2(\N))\ |\ B\text{ is selfadjoint}\}$&$\in\Sigma_2\setminus\Delta_2$\\
	approximating $\sigma(A)$ for $A\in \{B\in B(\ell^2(\N))\ |\ B\text{ is banded}\}$&$\in\Pi_2\setminus\Delta_2$\\
	approximating $\sigma(-\Delta+V)$ for $V$ bounded with known BV bounds\footnote{These known BV bounds mean that  for any $R>0$ one has \emph{a priori} knowledge of the total variation of $V$ on the ball $B_R$ of radius $R$ centered at the origin.} &$\in\Pi_2\setminus\Delta_1$.
	\end{tabularx}
	~\\

\medskip

\noindent Theorems  \ref{thm:main1},  \ref{th:main2} and  \ref{thm:main3} respectively imply that\\

	\begin{tabular}{ll}
	approximating $\sigma(A)$ for $A\in\Omega^{\mathrm{per}}$&$\in\Delta_2$\\
	approximating $\sigma(A)$ for $A\in\Omega^{\mathrm{per}}_{N,b}$&$\in\Delta_1$\\
	approximating $\sigma(-\Delta+V)$ for $V\in\Omega^{\mathrm{Sch}}$ &$\in\Delta_2$\\
	approximating $\sigma(-\Delta+V)$ for $V\in\Omega^{\mathrm{Sch}}_{p,M}$ &$\in\Pi_1$\\
	approximating $\sigma(-\Delta+V)$ for $V\in \Omega^{\mathrm{Sch}}_{x_0}$&$\in\Delta_3\setminus\Delta_2$.\\
	\end{tabular}

\bigskip

\noindent We point out that showing that a problem belongs to $\Pi_1,\,\Sigma_1$ or $\Delta_1$ is significant, as it shows that the computation can be done with certain guaranteed error bounds.

\subsection{Periodic Operators}
Schr\"{o}dinger equations with periodic potentials have been the subject of study since the earliest days of quantum mechanics, perhaps most famously
for the Bethe-Sommerfeld Conjecture \cite{BS-Conj}, which states that the number of gaps in the essential spectrum is finite in dimensions $\geq 2$. After
more than seventy years this conjecture was finally proved in complete generality for Schr\"{o}dinger operators in ${\mathbb R}^d$ by Parnovski \cite{MR2419769}.
In dimensions 2 and 3 the conjecture had already been proved by Popov and Skriganov \cite{MR629118} and Skriganov \cite{MR784531} respectively, and in
dimension 4 by Helffer and Mohamed \cite{MR1609321}. 

Beyond these results in mathematical analysis, since the 1990s interest in periodic problems has grown rapidly  in the 
applied analysis and computational mathematics literature, partly driven by models of photonic crystals. These models are typically based on time-harmonic 
Maxwell equations or upon second order elliptic equations with periodic coefficients. Figotin and Kuchment \cite{MR1257824,MR1372891,MR1417473} give 
particularly thorough analyses of some of these models,  showing that already in these cases with piecewise constant coefficients the associated operators may 
possess an arbitrarily large number of spectral gaps. 

For a periodic problem with some particular coefficients, often the first question of interest is whether it has any spectral gaps at all. Numerical methods
may be used to obtain some preliminary evidence, and are almost always based on the Floquet-Bloch decomposition. The fact that the coefficients are 
often only piecewise continuous requires substantial effort to be given to adaptive meshing, see Giani and Graham \cite{MR2909914}, although the continuous 
variation of the quasi-momentum over the Brillouin zone means that some of the effort can be recycled from one quasi-momentum to the next. Despite the substantial
computational cost, Floquet-Bloch techniques can sometimes be used to go beyond preliminary evidence, and have been combined with interval arithmetic to 
obtain algorithms which yield computer-assisted proofs of existence of spectral gaps for a wide class of problems  with coefficients expressed in terms
of elementary functions,  see Hoang, Plum and Wieners \cite{MR2556599}.
The literature for periodic problems which are not self-adjoint is much less extensive. For the ODE case, the work of Rofe-Beketov \cite{MR0157033} is generally
the starting point for any research on this topic.

	The discrete case encompasses numerous different directions of research. Typical examples are Toeplitz and Laurent operators \cite{Gohberg2003} or Jacobi operators  \cite{teschl2000jacobi}. Another direction that has gained a great deal of attention in the last two decades is that of periodic discrete Schr\"odinger operators and generalizations thereof. A particular type known as \emph{almost Mathieu operator} has been shown to exhibit rich spectral behavior (e.g. the spectrum can be a set of non-integer Hausdorff dimension, cf. \cite{avila2009ten, last2005spectral}). 
Beyond this, the literature pertaining to operators that are either not tri-diagonal or not self-adjoint is limited. The best starting point would be the book of Trefethen and Embree \cite{TE}.

\subsection*{Organization of the paper}
In Section \ref{sec:sci} we give a brief introduction to the main ideas of the $\SCI$ theory. Sections \ref{sec:matrices}, \ref{sec:schrodinger} and \ref{sec:counterexample} are dedicated to the proofs of Theorems \ref{thm:main1}, \ref{th:main2} and \ref{thm:main3}, respectively. Finally, in Section \ref{sec:numerics} we provide some numerical examples.

\section{The Solvability Complexity Index Hierarchy}
\label{sec:sci}
The Solvability Complexity Index ($\SCI$)  and the $\SCI$ Hierarchy provide a unified approach for understanding just how ``difficult'' it is to approximate infinite-dimensional problems (such as computing spectra) starting from finite-dimensional approximations.
 We start by setting the scene with a concrete example before providing precise abstract definitions.

\subsection{Informal discussion \& examples}
Consider the set $\Omega= B(\ell^2(\N))$ of all bounded operators on $\ell^2(\N)$. Let $\{e_i\}_{i\in\N}$ be the canonical basis. Then any element $A\in\Omega$ is represented by an infinite matrix. Denote {by} $\Lambda$ the set of all entries in this matrix. Then one could ask:

\emph{For any $A\in\Omega$, is it possible to compute its spectrum $\sigma(A)$  as the limit of a sequence of computations $\Gamma_n$, where each $\Gamma_n$ has access to only finitely many elements of $\Lambda$ and can only perform finitely many arithmetic computations?}

Needless to say, the whole point here is that the algorithms $\Gamma_n$ are not tailored for this specific element $A$: they are meant to be able to handle \emph{any} element $A\in\Omega$. The convergence of $\Gamma_n(A)$ to $\sigma(A)$ is made precise by realizing them as elements of the metric space $\mathcal M=(\mathrm{cl}(\C), d)$ which comprises all closed subsets of $\C$ endowed with an appropriate metric $d$ (such as the Hausdorff metric).

In \cite{Hansen11}, Hansen showed that it \emph{is} possible to compute $\sigma(A)$ for any $A\in\Omega$ as above. However, rather than having algorithms $\Gamma_n$ with a single index $n\in\N$,   \emph{three} indices were required, satisfying $\sigma(A)=\lim_{n_3\to\infty}\lim_{n_2\to\infty}\lim_{n_1\to\infty}\Gamma_{n_1,n_2,n_3}(A)$. The algorithms $\Gamma_{n_1,n_2,n_3}$ are given explicitly, and can be implemented numerically (though this raises a philosophical question about what it means to take successive limits numerically).  In \cite{AHS} it was proved that this is optimal: this computation cannot be performed with fewer than $3$ limits, and hence we say that this problem has an $\SCI$ value of $3$.

The $\SCI$ value strongly  depends  on $\Omega$: intuitively, if $\Omega$ contains fewer elements, then devising a `one-size-fits-all' algorithm should be easier. Indeed,  if one considers $\Omega^{\mathrm{sa}}:=\{A\in\Omega\,|\,A\text{ is selfadjoint}\}$ then the $\SCI$ value reduces to $2$ and for $\Omega^{\mathrm{cpt}}:=\{A\in\Omega\,|\,A\text{ is compact}\}$ it further reduces to $1$.

The classification into $\SCI$ values can be further refined into a classification that takes into account error bounds. This is the so-called \emph{SCI Hierarchy} which we describe below.

\subsection{Definitions}
We formalize the foregoing example with precise definitions:
\begin{de}[Computational problem]\label{def:computational_problem}
	A \emph{computational problem} is a quadruple $(\Om,\Lambda,\Xi,\mathcal M)$, where 
	\begin{enumi}
		\item $\Om$ is a set, called the \emph{primary set},
		\item $\Lambda$ is a set of complex-valued functions on $\Om$, called the \emph{evaluation set},
		\item $\mathcal M$ is a metric space,
		\item $\Xi:\Om\to \mathcal M$ is a map, called the \emph{problem function}.
	\end{enumi}
\end{de}
\begin{remark}
In this paper it is often clear what $\mathcal{M}, \Lambda$ and $\Xi$ are, and the important element is the primary set $\Omega$. In this case we may abuse notation and refer to $\Omega$ alone as the computational problem.
\end{remark}
\begin{de}[Arithmetic algorithm]\label{def:Algorithm}
	Let $(\Om,\Lambda,\Xi,\mathcal M)$ be a computational problem. An \emph{arithmetic algorithm} is a map $\Gamma:\Om\to\mathcal M$ such that for each $T\in\Om$ there exists a finite subset $\Lambda_\Gamma(T)\subset\Lambda$ such that
	\begin{enumi}
		\item the action of $\Gamma$ on $T$ depends only on $\{f(T)\}_{f\in\Lambda_\Gamma(T)}$,
		\item for every $S\in\Om$ with $f(T)=f(S)$ for all $f\in\Lambda_\Gamma(T)$ one has $\Lambda_\Gamma(S)=\Lambda_\Gamma(T)$,
		\item the action of $\Gamma$ on $T$ consists of performing only finitely many arithmetic operations on $\{f(T)\}_{f\in\Lambda_\Gamma(T)}$.
	\end{enumi}
\end{de}
\begin{de}[Tower of arithmetic algorithms]\label{def:Tower}
	Let $(\Om,\Lambda,\Xi,\mathcal M)$ be a computational problem. A \emph{tower of algorithms} of height $k$ for $\Xi$ is a family $\Gamma_{n_1,n_2,\dots,n_k}:\Om\to\mathcal M$ of arithmetic algorithms such that for all $T\in\Om$
	\begin{align*}
		\Xi(T) = \lim_{n_k\to\infty}\cdots\lim_{n_1\to\infty}\Gamma_{n_1,n_2,\dots,n_k}(T).
	\end{align*}
\end{de}
\begin{de}[SCI]
	A computational problem $(\Om,\Lambda,\Xi,\mathcal M)$ is said to have a \emph{Solvability Complexity Index} ($\SCI$) of $k$ if $k$ is the smallest integer for which there exists a tower of algorithms of height $k$ for $\Xi$.
	If a computational problem has solvability complexity index $k$, we write \begin{align*}
 			\SCI(\Om,\Lambda,\Xi,\mathcal M)=k.
		 \end{align*}
If there exists a family $\{\Gamma_n\}_{n\in\N}$ of arithmetic algorithms and $N_1\in\N$ such that $\Xi=\Gamma_{N_1}$ then we define $\SCI(\Om,\Lambda,\Xi,\mathcal M)=0$.
\end{de}

\begin{de}[The SCI Hierarchy]
\label{1st_SCI}
The \emph{$\SCI$ Hierarchy} is a hierarchy $\{\Delta_k\}_{k\in{\N_0}}$ of classes of computational problems $(\Om,\Lambda,\Xi,\mathcal M)$, where each $\Delta_k$ is defined as the collection of all computational problems satisfying:
\begin{align*}
(\Om,\Lambda,\Xi,\mathcal M)\in\Delta_0\quad &\qquad\Longleftrightarrow\qquad \mathrm{SCI}(\Om,\Lambda,\Xi,\mathcal M)= 0,\\
(\Om,\Lambda,\Xi,\mathcal M)\in\Delta_{k+1} &\qquad\Longleftrightarrow\qquad \mathrm{SCI}(\Om,\Lambda,\Xi,\mathcal M)\leq k,\qquad k\in\N,
\end{align*}
with the special class $\Delta_1$  defined as the class of all computational problems in $\Delta_2$ with a convergence rate:
\begin{equation*}
(\Om,\Lambda,\Xi,\mathcal M)\in\Delta_{1} \qquad\Longleftrightarrow\qquad
 \exists  \{\Gamma_n\}_{n\in \mathbb{N}},\,\exists\epsilon_n\downarrow0 \quad\text{ s.t. }\quad \forall  T\in\Omega, \ d(\Gamma_n(T),\Xi(T)) \leq \epsilon_n.
\end{equation*}
Hence we have that $\Delta_0\subset\Delta_1\subset\Delta_2\subset\cdots$
\end{de}

When the metric space $\mathcal{M}$ has certain ordering properties, one can define further classes that take into account convergence from below/above and associated error bounds. In order to not burden the reader with unnecessary definitions, we provide the definition that is relevant to the cases where $\mathcal{M}$ is the space of closed (and bounded) subsets of $\R^d$ together with the Attouch-Wets (Hausdorff) distance (definitions of which can be found in Appendix \ref{sec:metrics}). These are the cases of relevance to us. A more comprehensive and abstract definition can be found in \cite{AHS}.

\begin{de}[The SCI Hierarchy (Attouch-Wets/Hausdorff metric)]
\label{def:pi-sigma}
Consider the setup in Definition \ref{1st_SCI} assuming further that $\mathcal{M}=(\mathrm{cl}(\R^d),d)$ where $d=d_{\mathrm{AW}}$ or $d=d_{\mathrm{H}}$.   Then for $k\in\N$ we can define the following subsets of $\Delta_{k+1}$:
\begin{equation*}
	\begin{split}
	\Sigma_{k}
	&=
	\Big\{(\Om,\Lambda,\Xi,\mathcal M) \in \Delta_{k+1} \ \vert \  \exists\{ \Gamma_{n_1,\dots,n_k}\}\text{ s.t. }    \forall T \in \Omega,\, \exists \{X_{n_k}(T)\}\subset\mathcal{M}, \text{ s.t. } \\
	&\qquad\lim_{n_k\to\infty}\cdots\lim_{n_1\to\infty}\Gamma_{n_1,\dots,n_k}(T)=\Xi(T)  \text{ \& } \Gamma_{n_1,\dots,n_k}(T)\subset X_{n_k}(T)\text{ \& }d\left(X_{n_k}(T),\Xi(T)\right)\leq \epsilon_{n_k} \Big\},
	\\
	\Pi_{k}
	&=
	\Big\{(\Om,\Lambda,\Xi,\mathcal M) \in \Delta_{k+1} \ \vert \  \exists\{ \Gamma_{n_1,\dots,n_k}\}\text{ s.t. }    \forall T \in \Omega,\, \exists \{X_{n_k}(T)\}\subset\mathcal{M}, \text{ s.t. } \\
	&\qquad\qquad\lim_{n_k\to\infty}\cdots\lim_{n_1\to\infty}\Gamma_{n_1,\dots,n_k}(T)=\Xi(T)  \text{ \& } \Xi(T)\subset X_{n_k}(T) \\
	&\qquad\qquad \qquad\qquad\qquad\qquad\text{ \& }d\Bigl(X_{n_k}(T),\lim_{n_{k-1}\to\infty}\cdots\lim_{n_1\to\infty}\Gamma_{n_1,\dots,n_k}(T)\Bigr)\leq \epsilon_{n_k} \Big\}.
	\end{split}
\end{equation*}
It can be shown that $\Delta_k=\Sigma_k\cap\Pi_k$ for $k\in\{1,2,3\}$, see Figure \ref{fig:sci-hir}. We  refer to \cite{AHS} for a detailed treatise.
\end{de}

\section{Banded Periodic  Matrices}
\label{sec:matrices}

In this section we prove Theorem \ref{thm:main1}. We do this by defining an explicit algorithm  and show that its output converges to the desired spectrum. The two computational problems we consider only differ in the primary set $\Om$, and are as follows:

\begin{align}\label{eq:periodic_computational_problem}
\left\{
\begin{array}{rl}
	\hfill \Omega &=\quad \Omega^{\mathrm{per}}\quad\text{or}\quad\Omega^{\mathrm{per}}_{N,b}\\[1mm]
	\mathcal M &=\quad \big(\{K\subset\C\,|\, K \text{ compact}\}, \, d_{\mathrm{H}}\big)\\[1mm]
	\Lambda &=\quad \{\Omega\ni A\mapsto \langle e_i, Ae_j\rangle\,|\, i,j\in\Z\}\\[2mm]
	\Xi &: \quad\Om \to \mathcal M;\quad
	A\mapsto \sigma(A),
\end{array}
\right.
\end{align}
 where $\{e_i\}_{i\in\Z}$ denotes the canonical basis of $\ell^2(\Z)$ and  $d_\mathrm{H}$ denotes the Hausdorff distance. We remind the reader  that  $\Omega^{\mathrm{per}}_{N,b}$ is the class of operators on $\ell^2(\Z)$ whose canonical matrix representation has bandwidth  $b$ (i.e. $A_{ij}=0\,\,\forall |i-j|>b$) and whose matrix entries repeat periodically along the diagonals with period $N$ (here $N,b\in\N$). Clearly, every $A\in\Omega^{\mathrm{per}}_{N,b}$ defines a bounded operator on $\ell^2(\Z)$. Note that $\Omega^{\mathrm{per}} = \bigcup_{N,b\in\N}\Omega^{\mathrm{per}}_{N,b}$ is the class of operators on $\ell^2(\Z)$ whose canonical matrix representation is banded and whose matrix entries repeat periodically along the diagonals. In the language of the Solvability Complexity Index, the two parts of Theorem \ref{thm:main1} can be expressed as follows:

\begin{itemize}
\item
Part (i) amounts to proving  that the computational problem for $\Om^{\mathrm{per}}$ has an $\SCI$ value of $1$ (or, equivalently,   it belongs to $\Delta_2$). 
\item
Part (ii) amounts to showing that the computational problem for  $\Omega^{\mathrm{per}}_{N,b}$ belongs to $\Delta_1$, i.e. it can be approximated with explicit error bounds; this is restated as Theorem \ref{th:matrix_mainth} below.
\end{itemize}

\begin{remark}
We note that the Hausdorff distance is only defined for non-empty sets, and it is finite only if the sets are bounded. Hence it is important to observe that for any $A\in\Omega^{\mathrm{per}}$, the set $\sigma(A)$ is both non-empty and bounded. Indeed, boundedness of the spectrum follows immediately from  boundedness of $A$, while non-emptyness follows from the Floquet-Bloch theory described in Section \ref{sec:Floquet_Bloch}.
We discuss the metrics used in this paper in Appendix \ref{sec:metrics}.
\end{remark}
\begin{example}\label{ex:tridiag}
	The class $\Om_{N,1}^{\mathrm{per}}$ contains all Jacobi-type matrices of the form
	\begin{align}\label{eq:A_tridiag}
		A = \begin{pmatrix}
			\ddots & \ddots & \ddots & & & & \\
			& c_0 & a_0 & b_0 & & & 0& & \\
			& & c_1 & a_1 & b_1 & & & & \\
			& & & \ddots & \ddots & \ddots & & & \\
			& & & & c_{N-1} & a_{N-1} & b_{N-1} & & \\
			& &0 & & & c_0 & a_0 & b_0 & \\
			& & & & & & \ddots & \ddots & \ddots 
		\end{pmatrix} \in \Om_{N,1}^{\mathrm{per}}.
	\end{align}
\end{example} 
\subsection{Proof of Theorem \ref{thm:main1}{(i)}}
\label{subsec:proof-main-thm1}
To prove Theorem \ref{thm:main1}(i) we assume to be known    Theorem \ref{thm:main1}(ii). Theorem \ref{thm:main1}(ii)  can be restated in the language of the SCI Hierarchy as follows:
 \begin{theorem}\label{th:matrix_mainth}
 	For any fixed $N,b\in\N$ the computational problem for $\Omega^{\mathrm{per}}_{N,b}$ can be solved in one limit with explicit error bounds, i.e. $\big(\Omega^{\mathrm{per}}_{N,b},\Lambda,\Xi,\mathcal M\big) \in \Delta_1$.
 \end{theorem}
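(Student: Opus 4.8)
The plan is to deduce $(\Omega^{\mathrm{per}}_{N,b},\Lambda,\Xi,\mathcal M)\in\Delta_1$ from the Floquet--Bloch decomposition developed in Section~\ref{sec:Floquet_Bloch}. Grouping the canonical basis into consecutive blocks of length $N$ identifies $\ell^2(\Z)$ with $\ell^2(\Z;\C^N)$ and turns any $A\in\Omega^{\mathrm{per}}_{N,b}$ into a block--Laurent operator with matrix symbol $A(z)=\sum_{|k|\le K}A_k z^{k}$, a finite Laurent polynomial whose $N\times N$ coefficient matrices $A_k$ have entries of the form $\langle e_{Nk+r},Ae_{r'}\rangle$ and whose range $K$ is determined by $b$ and $N$ alone; moreover $\sigma(A)=\bigcup_{|z|=1}\sigma\big(A(z)\big)$, the right-hand side being compact. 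This reduces the infinite-dimensional spectral problem to (a) sampling $z$ on a sufficiently fine grid on the unit circle, and (b) computing, with certified accuracy, the eigenvalues of the fixed-size matrices $A(z)$ at the sampled points. Since $N$ and $b$ are fixed the algorithm may hard-code them; the $n$-th algorithm $\Gamma_n$ will first read the finitely many distinct matrix entries of $A$, then choose the grid fineness and an eigenvalue tolerance as functions of those entries and of $n$, and finally output the union of the computed eigenvalue enclosures.

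The error analysis rests on two quantitative facts, each with a constant $\Gamma_n$ can compute from the entries it has read. First, $\theta\mapsto A(e^{\i\theta})$ is Lipschitz with constant $L_A:=\sum_{|k|\le K}|k|\,\|A_k\|$. Second, the classical Elsner perturbation bound for $N\times N$ matrices,
\[
 d_{\mathrm H}\big(\sigma(M),\sigma(M')\big)\le\big(\|M\|+\|M'\|\big)^{1-1/N}\,\|M-M'\|^{1/N},
\]
yields $d_{\mathrm H}\big(\sigma(A(e^{\i\theta})),\sigma(A(e^{\i\theta'}))\big)\le C_A\,|\theta-\theta'|^{1/N}$ with $C_A:=(2\|A\|)^{1-1/N}L_A^{1/N}$, where $\|A\|$ is itself bounded explicitly by $\sum_{|k|\le K}\|A_k\|$. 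Consequently, if $\theta_0,\dots,\theta_{m-1}$ is the uniform grid on $[0,2\pi]$, the union $\bigcup_j\sigma(A(e^{\i\theta_j}))$ is contained in $\sigma(A)$ and every point of $\sigma(A)$ lies within $C_A(\pi/m)^{1/N}$ of it, so $d_{\mathrm H}\big(\bigcup_j\sigma(A(e^{\i\theta_j})),\sigma(A)\big)\le C_A(\pi/m)^{1/N}$.

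For step (b), computing the spectrum of a fixed $N\times N$ matrix lies in $\Delta_1$: the coefficients of $\det\big(A(e^{\i\theta_j})-\lambda I\big)$ are obtained from the entries of $A(e^{\i\theta_j})$ by finitely many arithmetic operations, and the roots of a degree-$N$ polynomial can be enclosed to any tolerance $\delta$ by finitely many arithmetic operations (an a priori root radius together with a Rouch\'e/exclusion certification suffices); the only step that is not literally arithmetic, evaluating $e^{\i\theta_j}=\cos\theta_j+\i\sin\theta_j$ at $\theta_j=2\pi j/m$, is harmless, since $\pi$, hence $\theta_j$, hence $\cos\theta_j$ and $\sin\theta_j$, are approximable to any precision by rationals via truncated series, and the resulting perturbation of $A(e^{\i\theta_j})$ is folded into $\delta$. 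Thus, given $n$, let $\Gamma_n(A)$ read the finitely many entries $\langle e_i,Ae_j\rangle$ with $0\le j<N$, $|i-j|\le b$ (which determine $A$), form the $A_k$, compute $L_A$ and $C_A$, choose $m=m(A,n)$ with $C_A(\pi/m)^{1/N}\le\tfrac1{2n}$, compute each $\sigma(A(e^{\i\theta_j}))$ to tolerance $\tfrac1{2n}$, and output the union of the enclosures. By sub-additivity of $d_{\mathrm H}$ under unions and the triangle inequality, $d_{\mathrm H}(\Gamma_n(A),\sigma(A))\le\tfrac1n$ for every $A\in\Omega^{\mathrm{per}}_{N,b}$; with $\epsilon_n:=1/n\downarrow0$ this is precisely $(\Omega^{\mathrm{per}}_{N,b},\Lambda,\Xi,\mathcal M)\in\Delta_1$.

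The main difficulty is not any individual step but keeping the error bound uniform over the class while the natural constants are not. Because the symbol matrices need not be normal, $\theta\mapsto\sigma(A(e^{\i\theta}))$ is only H\"older continuous (the $1/N$ exponent), which forces the slow $O(m^{-1/N})$ rate and a constant $C_A$ that grows with $\|A\|$ and is therefore unbounded on $\Omega^{\mathrm{per}}_{N,b}$; the key observation is that $C_A$ is nonetheless a \emph{computable} function of the finitely many data $\Gamma_n$ reads, so $\Gamma_n$ can adapt its grid and tolerance to make the \emph{final} bound $\epsilon_n=1/n$ independent of $A$ --- exactly what membership in $\Delta_1$ demands. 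A secondary technical point to pin down is the certified finite-dimensional eigenvalue routine and the verification that every operation performed by $\Gamma_n$ is genuinely one of finitely many arithmetic operations on the read entries, so that $\Gamma_n$ is an arithmetic algorithm in the sense of Definition~\ref{def:Algorithm}.
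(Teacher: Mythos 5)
Your proposal is correct and reaches the same $\Delta_1$ conclusion, but the quantitative engine is genuinely different from the paper's. Both arguments start from the same Floquet--Bloch reduction $\sigma(A)=\bigcup_{\theta}\sigma(A(\theta))$ with $A(\theta)$ an $N\times N$ symbol depending trigonometric-polynomially on $\theta$. The paper then defines $\Gamma_n(A)$ in one stroke as a sublevel set of $(z,\theta)\mapsto|\det(A(\theta)-zI)|$ over a product lattice in $(z,\theta)$: spectral inclusion follows from an explicit Lipschitz bound on the determinant (Jacobi formula plus Hadamard's inequality), and the absence of spurious output points follows from the monic factorization $|\det(zI-A(\theta))|\ge\operatorname{dist}(z,\sigma(A))^N$, giving a two-sided Hausdorff bound of order $n^{-1/(2N)}$ with computable thresholds. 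You instead discretize only the quasi-momentum, control the $\theta$-discretization error by Elsner's spectral-variation bound (H\"older exponent $1/N$, with a constant $C_A$ computable from the finitely many read entries, modulo replacing operator norms by computable upper bounds such as Frobenius norms), and delegate each fiber to a certified $N\times N$ eigenvalue routine; adaptivity of the grid to $C_A$ then yields the uniform rate $1/n$ that $\Delta_1$ demands, and the evaluation-consistency requirement of Definition \ref{def:Algorithm} holds since the set of entries read is fixed by $N,b,n$. This is a valid and somewhat more modular route: it cleanly separates the continuum-discretization error from the finite-dimensional eigenvalue problem, at the price that the ``Rouch\'e/exclusion'' certification is left as a black box --- if you unwind it using only arithmetic operations and comparisons, the natural implementation is precisely the paper's device (threshold the monic characteristic polynomial on a lattice and use the factorization lower bound), so your step (b) in effect reproduces the paper's Step 2 fiberwise, while Elsner's bound replaces the paper's Lipschitz estimate in $\theta$. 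Two small points to make explicit if you write this up: the ``tolerance'' for the fiber enclosures must be two-sided (every output point near a true eigenvalue and every true eigenvalue near an output point), since both directions enter the final triangle inequality; and your handling of the irrational grid values $e^{\mathrm{i}\theta_j}$ by rational approximation folded into the tolerance is fine, and indeed the paper's own algorithm quietly faces the same issue.
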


The proof of this theorem is contained in Subsection \ref{subsec:proof} below, after some preparatory work. First, however, we prove  Theorem \ref{thm:main1}{(i)}:

\begin{proof}[Proof of Theorem \ref{thm:main1}{(i)}]
	By Theorem \ref{th:matrix_mainth}, for every $N\in\N$ there exists a family of algorithms $\{\Gamma^{(N)}_n\}_{n\in\N}$, such that $\Gamma^{(N)}_n(B)\to\sigma(B)$ as $n\to+\infty$ for any $B\in\Omega^{\mathrm{per}}_{N,b}$. Now, let $A=(a_{ij})_{i,j\in\Z}\in\Omega^{\mathrm{per}}$ and define a new family $\{\Gamma_n\}_{n\in\N}$ by the following pseudocode. 
	
	\medskip
	\begin{algorithm}[H]\label{eq:pseudocode}
	\DontPrintSemicolon
	\SetAlgorithmName{Pseudocode}{}{}
	 \For{$n\in\N$}{
		 For $i\in\{-n,\dots,n\}$ define $d_i := (a_{i,i-n}, a_{i,i-n+1}, \dots, a_{i,i}, \dots, a_{i,i+n-1}, a_{i,i+n})$\;
		  \eIf{$\exists p<n$ \textnormal{s.t.} $d_{i+p} = d_i \; \forall i\in \{p,\dots,n-p\}$}{
			   $N:=\min\{p \,|\, d_{\bullet+p} = d_{\bullet}\}$\;
		  }{
		  $N:=n$
		  }
	 Define $B_n:=(b_{ij})_{i,j\in\Z}, \text{ where } \begin{cases}
							(b_{i,i-n}, \dots, b_{i,i}, \dots, b_{i,i+n}) := d_{(i\text{ mod }N)} &\text{for } i\in\Z \\
							 \hfill b_{ij}:=0 \phantom{omodNo} & \text{otherwise}
				 \end{cases}$\;
	 Define $\Gamma_n(A) := \Gamma_n^{(N)}(B_n)$\;
	 }
	 \caption{Definition of $\{\Gamma_n\}_{n\in\N}$ on $\Omega^{\mathrm{per}}$}
	 \label{alg:pseudocode}
	\end{algorithm}
	
	\medskip
	To clarify the meaning of $d_i$ we note that in Example \ref{ex:tridiag} one would have $d_i = (\dots,0,c_i,a_i,b_i,0,\dots)$.
	Loosely speaking, Pseudocode \ref{alg:pseudocode} first takes a finite section of $A$, searches it for periodic repetitions, and then defines a matrix $B_n\in\Omega^{\mathrm{per}}_{N,b}$ by periodic extension, to which $\Gamma^{(N)}_n$ can be applied. Because $A$ is banded and periodic, this routine will eventually find its period: there exists $n_0\in\N$ such that for all $n>n_0$,  $N$ (as defined in the routine) is equal to the period of $A$ and $B_n\equiv A$. Hence, for  $n>n_0$  we have $\Gamma_n(A) = \Gamma_n^{(N)}(B_n) = \Gamma_n^{(N)}(A)\to\sigma(A)$ as $n\to+\infty$, by the properties of $\Gamma_n^{(N)}$.
	
	Finally, note that every line of Pseudocode \ref{alg:pseudocode}  can be executed with finitely many algebraic operations on the matrix elements of $A$.
\end{proof}
The following subsections are devoted to the proof of Theorem \ref{th:matrix_mainth}.
 The proof is constructive, i.e. we will provide an explicit algorithm that computes the spectrum of any given operator $A\in \Omega^{\mathrm{per}}_{N,b}$ with explicit error bounds.

 \subsection{Floquet-Bloch transform}\label{sec:Floquet_Bloch}

 Let $N$ be as in the statement of Theorem \ref{th:matrix_mainth}. Given a vector $x=(x_n)_{n\in\Z}\in\ell^2(\Z)$ and given $\theta\in[0,2\pi]$, define
 \begin{align}\label{eq:U_theta_def}
 	(\cU_\theta x)_n := (2\pi)^{-\f12} \sum_{k\in\Z} x_{n+kN}e^{-\i\theta(k+\f n N)}.
 \end{align}
 We also introduce the symbol $\ell^2_{\text{per}}(N)$ to denote the space of all $N$-periodic sequences $(y_k)_{k\in\Z}$, together with the norm  $\|y\|_{\ell^2_{\text{per}}(N)}^2 = \sum_{k=0}^{N-1} |y_k|^2$. Note that $\ell^2_{\text{per}}(N)$  is canonically isomorphic to the Euclidean space $\R^N$.
 The following lemma is easily proved by direct computation.
 \begin{lemma}[Properties of $\cU_\theta$]
 	The map $\cU_\theta$ defined in \eqref{eq:U_theta_def} has the following properties.
 	\begin{enumi}
 		\item For any $x\in\ell^2(\Z)$, $\cU_\theta x$ is $N$-periodic, that is $\cU_\theta:\ell^2(\Z)\to \ell^2_{\text{per}}(N)$;
 		\item The map
 		\begin{align*}
 			\cU : \ell^2(\Z) &\to \int_{[0,2\pi]}^\oplus \ell^2_{\mathrm{per}}(N)\, d\theta \\
 			x &\mapsto (\cU_\theta x)_{\theta\in[0,2\pi]}
 		\end{align*}
 		is unitary;
 		\item The inverse $\cU^{-1}$ is given by 
 		\begin{align*}
 			(\cU^{-1}y)_n = (2\pi)^{-\f12} \int_0^{2\pi} y_n(\theta)e^{\i n \f{\theta}{N}} \,d\theta.
 		\end{align*}
 	\end{enumi}
 \end{lemma}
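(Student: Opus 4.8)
The plan is to recognise $\cU$ as a composition of three completely standard unitary maps and to read off the three assertions from that factorisation; wherever it is more transparent, the same identities can instead be verified by a short direct computation on the dense subspace of finitely supported sequences and then extended by continuity. Before anything else I would fix the meaning of the series in \eqref{eq:U_theta_def}: for a fixed residue $n\in\{0,\dots,N-1\}$ the ``fibre'' sequence $(x_{n+kN})_{k\in\Z}$ lies in $\ell^2(\Z)$ but in general not in $\ell^1(\Z)$, so $\sum_{k}x_{n+kN}e^{-\i\theta k}$ has to be read as a Fourier series, i.e.\ as an $L^2([0,2\pi])$-limit in $\theta$ (equivalently, first treat finitely supported $x$, for which the sum is finite at every $\theta$, and pass to the limit). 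This interpretive point is the only genuine subtlety; everything after it is routine.

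Granting this, part (i) is a one-liner: replacing $n$ by $n+N$ in \eqref{eq:U_theta_def} and re-indexing the summation by $k\mapsto k-1$ reproduces $(\cU_\theta x)_n$, so $\cU_\theta x\in\ell^2_{\mathrm{per}}(N)$. For part (ii) I would use the partition $\Z=\bigsqcup_{n=0}^{N-1}(n+N\Z)$: the regrouping map $P\colon\ell^2(\Z)\to\bigoplus_{n=0}^{N-1}\ell^2(\Z)$, $(Px)^{(n)}_k:=x_{n+kN}$, is unitary; the Fourier-series transform $\mathcal F\colon\ell^2(\Z)\to L^2([0,2\pi])$, $(\mathcal F a)(\theta):=(2\pi)^{-\f12}\sum_{k\in\Z}a_k e^{-\i k\theta}$, is unitary by Plancherel, hence so is its $N$-fold componentwise ampliation; and multiplication by the unimodular diagonal symbol $\diag(e^{-\i n\theta/N})_{n=0}^{N-1}$ defines a unitary $\Phi$ on $\int_{[0,2\pi]}^\oplus\ell^2_{\mathrm{per}}(N)\,d\theta$. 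A direct computation then gives $\cU=\Phi\circ\big(\bigoplus_{n=0}^{N-1}\mathcal F\big)\circ P$, so $\cU$ is unitary as a composition of unitaries. (If one prefers to avoid the factorisation language, one checks isometry directly via Plancherel, $\|\cU x\|^2=\sum_{n=0}^{N-1}\int_0^{2\pi}|(\cU_\theta x)_n|^2\,d\theta=\sum_{n=0}^{N-1}\sum_{k\in\Z}|x_{n+kN}|^2=\|x\|^2$, and surjectivity by exhibiting a preimage of a general $y=(y_n(\theta))_n$.)

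For part (iii) I would simply invert the factorisation: $\cU^{-1}=P^{-1}\circ\big(\bigoplus_{n=0}^{N-1}\mathcal F^{-1}\big)\circ\Phi^{-1}$ with $(\mathcal F^{-1}h)_k=(2\pi)^{-\f12}\int_0^{2\pi}h(\theta)e^{\i k\theta}\,d\theta$, and unwinding the three maps on an index $m=n+kN$ --- using that $y$ is $N$-periodic in its index while $m/N=k+n/N$ --- produces exactly $(\cU^{-1}y)_n=(2\pi)^{-\f12}\int_0^{2\pi}y_n(\theta)e^{\i n\theta/N}\,d\theta$. Alternatively one verifies the formula head-on: with $R$ the operator defined by that right-hand side, for finitely supported $x$ the finite sum and the $\theta$-integral may be interchanged and $\int_0^{2\pi}e^{-\i k\theta}\,d\theta=2\pi\,\delta_{k,0}$ collapses $R\cU x$ to $x$; density then gives $R\cU=\id$ on all of $\ell^2(\Z)$, and $\cU R=\id$ is obtained identically. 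The main ``obstacle'' --- such as it is --- is really just the bookkeeping of the normalising constants together with the interpretation of \eqref{eq:U_theta_def} settled in the first paragraph; once those are in place there is no substantive difficulty.
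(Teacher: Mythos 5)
Your proposal is correct. The paper itself omits the proof of this lemma, declaring it standard, and your argument is exactly the standard one it has in mind: the factorisation $\cU=\Phi\circ\bigl(\bigoplus_{n=0}^{N-1}\mathcal F\bigr)\circ P$ into the regrouping unitary, the fibrewise Fourier series (Plancherel), and multiplication by the unimodular phases $e^{-\i n\theta/N}$ gives (ii) at once, the $k\mapsto k\pm1$ re-indexing gives (i), and unwinding the factorisation (or checking $R\cU=\id$ on finitely supported sequences and using density together with the bijectivity from (ii)) gives (iii); your remark that the inversion formula for general $n=n_0+k_0N$ relies on the $N$-periodicity of $y$ combined with $n/N=k_0+n_0/N$ is precisely the right bookkeeping, as is the opening caveat that the series in \eqref{eq:U_theta_def} is to be read as an $L^2([0,2\pi])$-limit rather than a pointwise sum.
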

\begin{proof}
This is standard, and the proof is omitted.
\end{proof}

\subsection{Transform and Properties of $A\in\Omega^{\mathrm{per}}_{N,b}$}

The $N$-periodicity of $A\in\Omega^{\mathrm{per}}_{N,b}$ along diagonals is equivalent to the identity
\begin{align}\label{eq:periodic_identity}
	A_{m,n} = A_{m+kN, n+kN} \qquad\text{for all }m,n,k\in\Z.
\end{align}
\begin{lemma}
	For any $A\in\Omega^{\mathrm{per}}_{N,b}$, $y\in \ell^2_{
	\mathrm{per}}(N)$ and $\theta\in[0,2\pi]$, define $(A(\theta)y)_n = \sum_{j\in\Z}e^{\i\theta\f{j-n}{N}} A_{nj}y_j$. Then  one has 
	\begin{align*}
		\cU A \cU^{-1} = \int_{[0,2\pi]}^\oplus A(\theta)\,d\theta.
	\end{align*}
\end{lemma}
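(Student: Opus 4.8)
The plan is to verify the claimed Floquet--Bloch decomposition $\cU A \cU^{-1} = \int_{[0,2\pi]}^\oplus A(\theta)\,d\theta$ by a direct computation, checking that both sides agree when applied to an arbitrary $x \in \ell^2(\Z)$ and evaluated at an arbitrary $\theta \in [0,2\pi]$ in the $n$-th component of $\ell^2_{\mathrm{per}}(N)$. First I would unravel the definitions: starting from $(\cU_\theta x)_n = (2\pi)^{-1/2}\sum_{k\in\Z} x_{n+kN} e^{-\i\theta(k + n/N)}$, I compute $(A(\theta)(\cU_\theta x))_n = \sum_{j\in\Z} e^{\i\theta\frac{j-n}{N}} A_{nj} (\cU_\theta x)_j$. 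Substituting the series for $(\cU_\theta x)_j$ gives a double sum over $j\in\Z$ and $k\in\Z$ of terms $(2\pi)^{-1/2} e^{\i\theta\frac{j-n}{N}} A_{nj} x_{j+kN} e^{-\i\theta(k+j/N)}$; the phases combine to $e^{-\i\theta(k + n/N)}$, which is independent of $j$, and so the expression becomes $(2\pi)^{-1/2}\sum_{k\in\Z} e^{-\i\theta(k+n/N)}\sum_{j\in\Z} A_{nj} x_{j+kN}$.

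The next step is to recognise the inner sum. Reindex $j \mapsto j - kN$ in $\sum_{j} A_{nj} x_{j+kN}$ and use the periodicity identity \eqref{eq:periodic_identity}, $A_{n, j-kN} = A_{n+kN, j}$, to rewrite it as $\sum_{j} A_{n+kN, j} x_j = (Ax)_{n+kN}$. Hence $(A(\theta)(\cU_\theta x))_n = (2\pi)^{-1/2}\sum_{k\in\Z} (Ax)_{n+kN} e^{-\i\theta(k+n/N)} = (\cU_\theta (Ax))_n$, which is exactly the $n$-th component of $(\cU A x)(\theta)$. Since this holds for all $n$, all $\theta$, and all $x$, and since $\cU$ is unitary (hence $\cU^{-1}$ makes sense), we conclude $\cU A \cU^{-1} = \int^\oplus_{[0,2\pi]} A(\theta)\,d\theta$. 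One should also note, for the decomposition to be meaningful, that each $A(\theta)$ is a bounded operator on the finite-dimensional space $\ell^2_{\mathrm{per}}(N)\cong\R^N$ — indeed the bandedness of $A$ makes the defining sum for $A(\theta)$ finite — and that $\theta \mapsto A(\theta)$ is continuous (in fact entry-wise a trigonometric polynomial), so the direct integral is well-defined; this is essentially immediate from the formulas.

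I do not expect any real obstacle here: the statement is the standard Floquet--Bloch / block-diagonalisation fact adapted to the $\ell^2(\Z)$ setting, and the only thing to be careful about is bookkeeping with the phase factors $e^{\i\theta(\cdot/N)}$ and the correct application of \eqref{eq:periodic_identity} when reindexing. The mildly delicate point — if one wants to be fully rigorous about the manipulation of the double series — is justifying the interchange of the two summations over $j$ and $k$; this is harmless because for fixed $n$ the bandedness of $A$ makes the sum over $j$ finite (only $|j-n|\le b$ contributes), so there is no convergence issue at all and Fubini is trivial. Thus the proof is a short unwinding of definitions followed by one application of the periodicity identity.
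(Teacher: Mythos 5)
Your computation is correct and is essentially the paper's own proof run in reverse: the paper expands $(\cU_\theta Ax)_n$ and uses the periodicity identity \eqref{eq:periodic_identity} with the same reindexing to arrive at $(A(\theta)\cU_\theta x)_n$, whereas you start from $(A(\theta)\cU_\theta x)_n$ and recombine the phases to recover $(\cU_\theta Ax)_n$, then conclude by unitarity of $\cU$. The only addition is your explicit remark on the finiteness of the $j$-sum via bandedness, which the paper leaves implicit; otherwise the two arguments coincide.
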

\begin{proof}
	Let $x\in\ell^2(\Z)$. We have
	\begin{align*}
		(\cU_\theta Ax)_n &= (2\pi)^{-\f12} \sum_{k\in\Z} (Ax)_{n+kN} e^{-\i\theta(k+\f{n}{N})} \\
		&= (2\pi)^{-\f12} \sum_{k\in\Z}\sum_{j\in\Z} A_{n+kN,j} x_{j} e^{-\i\theta(k+\f{n}{N})} \\
		&= (2\pi)^{-\f12} \sum_{k\in\Z}\sum_{j\in\Z} A_{n,j-kN} x_{j} e^{-\i\theta(k+\f{n}{N})} \\
		&= (2\pi)^{-\f12} \sum_{k\in\Z}\sum_{j\in\Z} A_{nj} x_{j+kN} e^{-\i\theta(k+\f{n}{N})} \\
		&= (2\pi)^{-\f12} \sum_{k\in\Z}\sum_{j\in\Z} \left( e^{-\i\theta\f{n-j}{N}} A_{nj} \right) x_{j+kN} e^{-\i\theta(k+\f{j}{N})} \\
		&= \sum_{j\in\Z} \left( e^{-\i\theta\f{n-j}{N}} A_{nj} \right) (2\pi)^{-\f12} \sum_{k\in\Z} x_{j+kN} e^{-\i\theta(k+\f{j}{N})} \\
		&= (A(\theta)\cU_\theta x)_n,
	\end{align*}
	where we have used the periodicity of $A$ (see \eqref{eq:periodic_identity}) in the third line. The assertion now follows from the invertibility of $\cU$.
\end{proof}
\begin{remark}
Observe that $(A(\theta)y)_n=(A(\theta)y)_{n+kN}$ for any $k\in\Z$ so that $A(\theta)y\in \ell^2_{\mathrm{per}}(N)$. Hence, $A(\theta)$ is an operator $ \ell^2_{\mathrm{per}}(N)\to \ell^2_{\mathrm{per}}(N)$, which can be expressed as an $N\times N$ matrix. Note, however, that the numbers $e^{-\i\theta\f{n-j}{N}} A_{nj}$ are \emph{not} the matrix elements of $A(\theta)$ with respect to any basis. Indeed, $\ell^2_{\text{per}}(N)$ is finite-dimensional, while $e^{-\i\theta\f{n-j}{N}} A_{nj}$ ($n,j\in\Z$) are infinitely many numbers.
\end{remark} 
 As noted earlier, we have $\ell^2_{\text{per}}(N) \cong \R^N$. The identification can be made via the basis 
 \begin{align*}
 	e_1^\text{per} &= (\dots, 0, \underbrace{1, 0, \dots, 0}_{N\text{ entries}}, 1, 0, \dots) \\
 	e_2^\text{per} &= (\dots, 0, 0, \underbrace{1, 0, \dots, 0}_{N\text{ entries}}, 1\dots) \\
 	&\quad \vdots
 \end{align*}
i.e. $(e_n^{\text{per}})_j = \delta_{(j\text{ mod }N),\,n}$ for $n\in\{0,\dots,N-1\}$ (Kronecker symbol). In this basis, the matrix elements of $A(\theta)$ become
\begin{align}
	A(\theta)_{mn} &= \bigl\langle e_m^\text{per}, A(\theta) e_n^\text{per} \bigr\rangle_{\ell^2_{\text{per}}(N)} 
	\nonumber\\
	&= \sum_{k=0}^{N-1} \delta_{(k\text{ mod }N),\,m} \sum_{j\in\Z} e^{\i\theta\f{j-k}{N}} A_{kj} \delta_{(j\text{ mod }N),\,n}
	\nonumber\\
	&= \sum_{j\in\Z} \sum_{k=0}^{N-1} \delta_{k,m} e^{\i\theta\f{j-k}{N}} A_{kj} \delta_{(j\text{ mod }N),\,n}
	\nonumber\\
	&= \sum_{j\in\Z} e^{\i\theta\f{j-m}{N}} A_{mj} \delta_{(j\text{ mod }N),\,n}
	\nonumber\\
	&= \sum_{j'\in\Z} e^{\i\theta\f{j'N+n-m}{N}} A_{m,\, j'N+n} 
	\nonumber\\
	&= e^{\i\theta\f{n-m}{N}} \sum_{j'\in\Z} e^{\i\theta j'} A_{m,\, j'N+n}.
	\label{eq:A(theta)_formula}
\end{align}
Note that the sum in the last line is actually finite, because $A$ is banded. Indeed, if the band width of $A$ is less than the period, then the sum over $j'$ in \eqref{eq:A(theta)_formula} contains only one term.
\begin{example}
	If $A$ is a matrix with $N=1$, i.e. $A$ is a Laurent operator, then formula \eqref{eq:A(theta)_formula} yields a scalar function of $\theta$ given by
	\begin{align*}
		A(\theta) = \sum_{j\in\Z} e^{\i\theta j} A_{0, j}. 
	\end{align*}
	Writing $z:=e^{\i\theta}$, we see that $A(\theta) = \sum_{j\in\Z} z^j A_{0, j}$ is given by the \emph{symbol} of the Laurent operator. We thus recover the classical result that the spectrum of a Laurent operator is given by the image of the unit circle under its symbol (cf. \cite[Th. 7.1]{TE}).
\end{example}
\begin{example}
	If $A$ is tri-diagonal and $N=5$ (cf. \eqref{eq:A_tridiag}), the formula above gives
	 \begin{align*}
	 	A(\theta) = \begin{pmatrix}
	 		a_0 & b_0 e^{\i\f{\theta}{5}} & 0 & 0 & c_0 e^{-\i\f{\theta}{5}} \\
	 		c_1 e^{-\i\f{\theta}{5}} & a_1 & b_1 e^{\i\f{\theta}{5}} & 0 & 0\\
	 		0 & c_2 e^{-\i\f{\theta}{5}} & a_2 & b_2 e^{\i\f{\theta}{5}} & 0\\
	 		0 & 0 & c_3 e^{-\i\f{\theta}{5}} & a_3 & b_3 e^{\i\f{\theta}{5}}\\
	 		b_4 e^{\i\f{\theta}{5}} & 0 & 0 & c_4 e^{-\i\f{\theta}{5}} & a_4
	 	\end{pmatrix}.
	 \end{align*}
\end{example}

Next, we establish some elementary facts about the spectrum of a periodic operator.
 By standard results about the Floquet-Bloch transform, we have
 \begin{align*}
 	\sigma(A) = \bigcup_{\theta\in[0,2\pi]} \sigma(A(\theta))
 \end{align*}
 for all $A\in\Omega^{\mathrm{per}}_{N,b}$. Thus, an algorithm may be devised by determining the zeros of the map $z\mapsto \det(A(\theta)-zI)$, $\theta\in[0,2\pi]$. To this end, note that by definition of the determinant, one has
 \begin{align*}
 	\det(A(\theta)-zI) = \sum_{n=0}^{N} p_n(\theta)z^n,
 \end{align*}
 where the coefficient functions $p_n(\theta)$ are polynomials in the matrix entries $A(\theta)_{mn}$ and hence analytic and periodic in $\theta$.
 Hence they are bounded:
 \begin{align*}
 	\exists C>0\, :\, |p_n(\theta)|\leq C\qquad\forall\theta\in[0,2\pi],\,\,\forall n\in\{0,\dots,N\}.
 \end{align*}
 Moreover, $p_N(\theta)\equiv 1$.

\begin{lemma}\label{lemma:claim}
Let $A\in\Omega^{\mathrm{per}}_{N,b}$ and $R>0$. For any $z,w\in\C$ with $|z|,|w|\leq R$ and $\theta,\vartheta\in[0,2\pi]$ one has
 	\begin{equation*}\label{eq:Ctilde}
	\begin{split}
 		|\det(A(\theta)-zI)-\det(A(\vartheta)-wI)|& \leq \\
		N^{\f{N}{2}+2}\big((2b+1)\|A\|_{\infty}& + R\big)^N (2b+1)^2 \|A\|_{\infty} (|z-w|+|\theta-\vartheta|),
	\end{split}
 	\end{equation*}
 	where we note that $\|A\|_{\infty}=\max\{|A_{ij}|\,|\,i,j\in\Z\}$ can be computed in finitely many steps.
\end{lemma}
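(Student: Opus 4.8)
The plan is to estimate the difference of two determinants by interpolating: first vary the spectral parameter from $w$ to $z$ keeping $\theta$ fixed, then vary the Bloch parameter from $\vartheta$ to $\theta$ keeping $z$ fixed, and bound each piece separately. The key tool is the elementary fact that for two $N\times N$ matrices $P,Q$ with all entries bounded in modulus by some $K$, one has $|\det P - \det Q|\leq N\cdot N^{(N-1)/2}\,K^{N-1}\,\max_{i,j}|P_{ij}-Q_{ij}|$; this follows from the Leibniz (or multilinearity) expansion of the determinant together with Hadamard's inequality applied to the $N-1$ columns that are left unchanged in a telescoping column-by-column swap. Applying this to $P=A(\theta)-zI$ and $Q=A(\theta)-wI$, the entrywise difference is $|z-w|$ on the diagonal and $0$ off-diagonal, so that term contributes a factor $|z-w|$; applying it to $P=A(\theta)-wI$ and $Q=A(\vartheta)-wI$, the entrywise difference is $|A(\theta)_{mn}-A(\vartheta)_{mn}|$, which we must bound.

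The main technical step is therefore controlling $|A(\theta)_{mn}-A(\vartheta)_{mn}|$ in terms of $|\theta-\vartheta|$. From formula \eqref{eq:A(theta)_formula}, $A(\theta)_{mn}=e^{\i\theta(n-m)/N}\sum_{j'} e^{\i\theta j'}A_{m,j'N+n}$, a finite sum (by bandedness) whose number of nonzero terms is at most $\lceil (2b+1)/N\rceil$, with each term of modulus at most $\|A\|_\infty$; differentiating in $\theta$ and using $|n-m|\leq N$ (since $m\in\{0,\dots,N-1\}$ and the relevant $n$-indices lie in a band of width $2b+1$, the exponents $|n-m|/N$ and $|j'|$ appearing stay $O(b)$) gives a derivative bound of the shape $|\partial_\theta A(\theta)_{mn}|\lesssim (2b+1)\|A\|_\infty$, hence $|A(\theta)_{mn}-A(\vartheta)_{mn}|\leq (2b+1)\|A\|_\infty|\theta-\vartheta|$ after a little bookkeeping of the constants so that they match the stated $(2b+1)^2\|A\|_\infty$ factor. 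I also need the crude a priori bound $\|A(\theta)\|\leq (2b+1)\|A\|_\infty$ (row-sum bound, again using that each row of $A(\theta)$ has at most $2b+1$ nonzero entries), which, together with $|z|,|w|\leq R$, gives $|P_{ij}|,|Q_{ij}|\leq (2b+1)\|A\|_\infty+R=:K$ for all the matrices that appear, feeding the Hadamard-type estimate above.

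Assembling: the $z$-variation contributes $\leq N\cdot N^{(N-1)/2}K^{N-1}|z-w|$ and the $\theta$-variation contributes $\leq N\cdot N^{(N-1)/2}K^{N-1}\cdot(2b+1)\|A\|_\infty|\theta-\vartheta|$; since $(2b+1)\|A\|_\infty\geq 1$ can be assumed (or one simply bounds the $z$-term by the $\theta$-coefficient as well), both are dominated by $N^{(N-1)/2+1}K^{N-1}(2b+1)^2\|A\|_\infty(|z-w|+|\theta-\vartheta|)$, and a further harmless enlargement of the exponents $(N-1)/2+1\to N/2+2$ and $K^{N-1}\to K^N$ yields exactly the claimed inequality with $K=(2b+1)\|A\|_\infty+R$. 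The final remark, that $\|A\|_\infty=\max\{|A_{ij}|\}$ is computable in finitely many steps, is immediate from periodicity and bandedness: only the finitely many entries $A_{ij}$ with $0\leq i\leq N-1$ and $|i-j|\leq b$ need to be inspected.

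The step I expect to be the genuine obstacle is pinning down the constants in the bound on $|A(\theta)_{mn}-A(\vartheta)_{mn}|$ so that they fit under the stated $N^{N/2+2}((2b+1)\|A\|_\infty+R)^N(2b+1)^2\|A\|_\infty$ umbrella — in particular making sure the contribution of the phase $e^{\i\theta(n-m)/N}$ together with the (possibly several) terms $e^{\i\theta j'}$ in the banded sum is correctly absorbed into the single factor $(2b+1)^2\|A\|_\infty$; everything else is a routine application of multilinearity of the determinant and Hadamard's inequality.
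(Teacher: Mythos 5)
Your proposal is correct and follows essentially the same route as the paper: both reduce the claim to Hadamard-type bounds on the relevant $(N-1)\times(N-1)$ subdeterminants combined with the entrywise bounds $|A(\theta)_{mn}|\le(2b+1)\|A\|_{\infty}$ and $|\partial_\theta A(\theta)_{mn}|\le(2b+1)^2\|A\|_{\infty}$ read off from \eqref{eq:A(theta)_formula}, the only cosmetic difference being that you telescope column-by-column via multilinearity where the paper applies the mean value theorem to $(z,\theta)\mapsto\det(A(\theta)-zI)$ together with the Jacobi formula for its partial derivatives. The loose enlargements you flag (e.g. $K^{N-1}\to K^{N}$ and absorbing the $z$-variation under the $(2b+1)^2\|A\|_{\infty}$ factor) are of exactly the same kind as those made in the paper's own proof, so they do not constitute a gap relative to it.
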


 \begin{proof}
	Denote $B_R:=\{z\in\C\,|\,|z|\leq R\}$.
	From the mean value theorem it follows that for any differentiable function $f:B_R\times[0,2\pi]\to\C$ one has
	\begin{align*}
		|f(z,\theta)-f(w,\vartheta)| \leq \|\nabla f\|_{L^\infty(B_R\times[0,2\pi])}\big( |z-w|+|\theta-\vartheta| \big).
	\end{align*}
	Hence to prove the claim it is enough to bound $\|\nabla f\|_{L^\infty(B_R)\times[0,2\pi]}$ for $f(z,\theta) = \det(A(\theta)-zI)$. This follows from the Jacobi formula: for any square matrix $M$ one has
	\begin{align*}
		\f{\del\det(M)}{\del M_{ij}} &= \cof(M)_{ij},
	\end{align*}
	where $\cof(M)$ denotes the cofactor matrix of $M$. Hence,
	\begin{align*}
		\f{\del}{\del z}\det(A(\theta)-zI) &= \sum_{i,j=0}^{N-1} \cof(A(\theta)-zI)_{ij} (-\delta_{ij}),
		\\
		\f{\del}{\del \theta}\det(A(\theta)-zI) &= \sum_{i,j=0}^{N-1} \cof(A(\theta)-zI)_{ij} \f{\del A(\theta)_{ij}}{\del \theta}.
	\end{align*}
	Using Hadamard's inequality to bound the cofactor matrix, we obtain the bounds
	\begin{align}
		\Big|\f{\del}{\del z}\det(A(\theta)-zI)\Big| &\leq N^{\f{N}{2}+1}\|A(\theta)-zI\|_{\infty}^N
		\nonumber
		\\
		&\leq N^{\f{N}{2}+1}\big((2b+1)\|A\|_{\infty} + R\big)^N,
		\label{eq:dz_bound}
		\\
		\Big|\f{\del}{\del \theta}\det(A(\theta)-zI)\Big| &\leq N^{\f{N}{2}+2}\|A(\theta)-zI\|_{\infty}^N \|\del_\theta A(\theta)\|_{\infty}
		\nonumber
		\\
		&\leq N^{\f{N}{2}+2}\big((2b+1)\|A\|_{\infty} + R\big)^N \|\del_\theta A(\theta)\|_{\infty}
		\nonumber
		\\
		&\leq N^{\f{N}{2}+2}\big((2b+1)\|A\|_{\infty} + R\big)^N (2b+1)^2 \|A\|_{\infty},
		\label{eq:dtheta_bound}
	\end{align}
	where the last two lines follow from the explicit formula \eqref{eq:A(theta)_formula}. The bounds \eqref{eq:dz_bound} and \eqref{eq:dtheta_bound} imply
	\begin{align*}
		\max\{|\del_\theta f|,|\del_z f|\} &\leq N^{\f{N}{2}+2}\big((2b+1)\|A\|_{\infty} + R\big)^N (2b+1)^2 \|A\|_{\infty}
	\end{align*}
	and the claim follows.
 \end{proof}

 \subsection{Proof of Theorem \ref{thm:main1}(ii)}
 \label{subsec:proof}

 We can finally prove Theorem \ref{thm:main1}(ii) which was restated equivalently as Theorem \ref{th:matrix_mainth}. First, we  define the family of algorithms $\{\Gamma_n\}_{n\in\N}$, where each of them maps $\Gamma_n:\Omega^{\mathrm{per}}_{N,b}\to\mathcal{M}$ (we recall that $\mathcal{M}$ is the space of all compact subsets of $\C$ endowed with the Hausdorff metric). 
 It is easy to see that for any $A\in\Omega^{\mathrm{per}}_{N,b}$ one has $\|A\|_{\ell^2\to\ell^2}\leq R_A:=\sum_{j=1}^N\sum_{k=-b}^b |A_{jk}|$ (this follows from Young's inequality), a quantity which can be computed in finitely many steps. Therefore, if we denote $B_{R_A}:=\{z\in\C\,|\,|z|\leq R_A\}$, the \emph{a priori} inclusion $\sigma(A)\subset B_{R_A}$ holds true for any $A\in\Omega^{\mathrm{per}}_{N,b}$.
 
\begin{de}[$N,b$-Periodic Matrix Algorithm]\label{def:periodic_matrix_alg}
	Let $A\in\Omega^{\mathrm{per}}_{N,b}$ and for $n\in\N$, let $\Theta_n = \big(\theta_1^{(n)},\dots,\theta_n^{(n)}\big)$ be a linear spacing of $[0,2\pi]$ and let $L_n:=\f1n(\Z+\i\Z) \cap B_{R_A}$  be a finite lattice with spacing $n^{-1}$. Then we define
	\begin{align}\label{eq:periodic_matrix_alg}
		\Gamma_n(A) := \bigcup_{i=1}^n \left\{ z\in L_n\,\middle|\, \big|\det\big(A(\theta_i^{(n)})-zI\big)\big|\leq n^{-\f12} \right\}.
	\end{align}
\end{de} 

\begin{remark}
We emphasize that \eqref{eq:periodic_matrix_alg} can be computed in finitely many arithmetic operations on the matrix elements of $A$. Indeed, computing the radius $R_A$ consists of a finite sequence of multiplications and additions, as does the computation of each of the determinants $\det(A(\theta_i^{\scriptscriptstyle(n\scriptscriptstyle)})-z)$ for $z$, $\theta_i^{\scriptscriptstyle(n\scriptscriptstyle)}$ in the finite sets $L_n$, $\Theta_n$.
\end{remark}
 \begin{proof}[Proof of Theorem \ref{th:matrix_mainth} (equiv. Theorem \ref{thm:main1}(ii))]
	The proof has two steps.\\
	
	\noindent\underline{{\bf Step 1}:  $\sigma(A)$ is approximated from above by $\Gamma_n(A)$.}
For any set $K\subset\C$ we denote by $B_\delta(K)$ the $\delta$-neighborhood of $K$.
	
	Let $z\in\sigma(A)$. Then $|z|\leq \|A\|_{\ell^2\to\ell^2} \leq R_A$ and there exists $\theta\in[0,2\pi]$ such that $\det(A(\theta)-zI)=0$. Choose $z_n\in L_n$ such that $|z-z_n| \leq n^{-1}$ and $\theta_n\in\Theta_n$ such that $|\theta-\theta_n| \leq n^{-1}$. Applying Lemma \ref{lemma:claim} we obtain the bound
	\begin{align*}
		|\det(A(\theta_n)-z_nI)| &\leq N^{\f{N}{2}+2}\big((2b+1)\|A\|_{\infty} + R_A\big)^N (2b+1)^2 \|A\|_{\infty} (|z-z_n|+|\theta-\theta_n|)
		\\
		&\leq \f2n N^{\f{N}{2}+2}\big((2b+1)\|A\|_{\infty} + R_A\big)^N (2b+1)^2 \|A\|_{\infty} .
	\end{align*}
	This inequality implies that $z_n\in\Gamma_n(A)$ as soon as $2n^{-1} N^{\f{N}{2}+2}\big((2b+1)\|A\|_{\infty} + R_A\big)^N (2b+1)^2 \|A\|_{\infty} \leq n^{-\f12}$, or equivalently, 
	\begin{align}\label{eq:quant_matrix_bound_1}
		n > \left(2 N^{\f{N}{2}+2}\big((2b+1)\|A\|_{\infty} + R_A\big)^N (2b+1)^2 \|A\|_{\infty}\right)^2.
	\end{align}
	Note that the right-hand side of \eqref{eq:quant_matrix_bound_1} is computable in finitely many arithmetic operations if $N$ and $b$ are known \emph{a priori}. Since $|z-z_n| \leq n^{-1}$ by construction, this shows that $\sigma(A)\subset B_{\f1n}(\Gamma_n(A))$ for $n > (2 N^{\f{N}{2}+2}\big((2b+1)\|A\|_{\infty} + R_A\big)^N (2b+1)^2 \|A\|_{\infty})^2$.
	
	\medskip
	\noindent\underline{{\bf Step 2}:  $\sigma(A)$ is approximated from below by $\Gamma_n(A)$.}
	Next we prove   that $\Gamma_n(A)\subset B_{\delta}(\sigma(A))$ for $n$ large enough. We first note that, since $\det(zI-A(\theta))$ is a polynomial in $z$, it can be factored to take the form
	\begin{align}\label{eq:factorization}
		\det(zI-A(\theta)) = \prod_{i=1}^N (z-z_i(\theta)),
	\end{align}
	where $z_i(\theta)$ are the zeros of $z\mapsto \det(zI-A(\theta))$ (note that for a characteristic polynomial the coefficient of the leading order term is always 1). From \eqref{eq:factorization} we  obtain the bound 
	\begin{equation}
		|\det(zI-A(\theta))| = \prod_{i=1}^N |z-z_i(\theta)|
		\geq \dist(z,\sigma(A))^N.
		\label{eq:dist_bound}
	\end{equation}
	Let $z_n\in\Gamma_n(A)$ be an arbitrary sequence. Then, by definition, $|\det(z_nI-A(\theta_n))|\leq n^{-\f12}$ for some $\theta_n\in\Theta_n$. From \eqref{eq:dist_bound} we conclude that
	\begin{align*}
		\dist(z_n,\sigma(A))^N \leq |\det(z_nI-A(\theta_n))|
		\leq n^{-\f12}
	\end{align*}
	and thus $z_n\in B_{n^{-\nf{1}{2N}}}(\sigma(A))$ for all $n\in\N$. This concludes step 2.
	
	\medskip
	Together, steps 1 and 2 imply that for any given $\delta>0$ one has both $\Gamma_n(A)\subset B_\delta(\sigma(A))$ and $\sigma(A)\subset B_\delta(\Gamma_n(A))$ provided that
	\begin{align}\label{eq:quantitative_n_bound}
		n > \max\left\{ \delta^{-2N},\, \left(2 N^{\f{N}{2}+2}\big((2b+1)\|A\|_{\infty} + R_A\big)^N (2b+1)^2 \|A\|_{\infty}\right)^2 \right\}.
	\end{align}
	Since the right-hand side of \eqref{eq:quantitative_n_bound} is computable from $N$, $b$ and the matrix elements of $A$ in finitely many arithmetic operations, we conclude that the computational problem is in $\Delta_1$.
 \end{proof}
\begin{example}\label{example:matrices}
	The algorithm from Definition \ref{def:periodic_matrix_alg} can easily be implemented in Matlab.  An example calculation with bandwidth $b=1, N=5$ and $A$ of the form \eqref{eq:A_tridiag} with
	\begin{align*}
		(a_i) &= (1, 0, 1, 0, 2)
		\\
		(b_i) &= (-1, -2, 1, 3\i, -5)
		\\
		(c_i) &= (2\i, -3\i, 2\i, 0, \i)
	\end{align*}
	yields the following output in the complex plane. Note that the output is a set of points on a discrete grid in the complex plane and hence the output looks `fat'. As $n$ is taken larger this set of points dwindles to just those points that lie in an ever decreasing neighborhood of the true spectrum.
	\begin{figure}[H]
		\centering
		\includegraphics{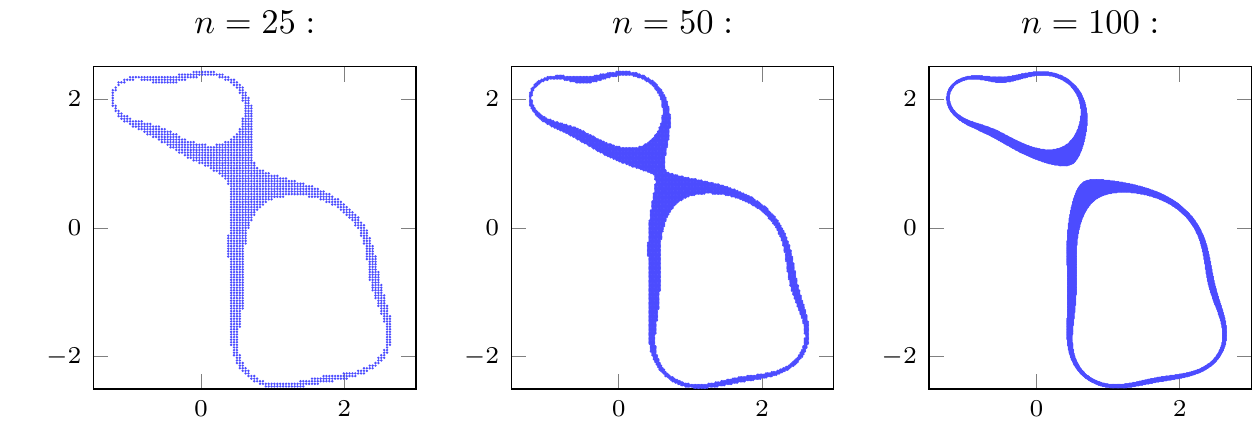}	
		\caption{Output of $\Gamma_n(A)$ in the complex plane for different values of $n$.}
		\label{fig:matrices}
	\end{figure}
\end{example}
The Matlab implementation that produced Figure \ref{fig:matrices} is available online at \url{https://github.com/frank-roesler/TriSpec}.

\section{Schr\"odinger Operators with Periodic Potentials}
 \label{sec:schrodinger}

In this section we prove Theorem \ref{th:main2} regarding the spectrum of Schr\"odinger operators $H=-\Delta+V$ with periodic potentials $V:\R^d\to\C$. Again, this is done by defining an explicit algorithm. We shall consider three computational problems which only differ in their primary set $\Om$, and are as follows (primary sets are defined immediately below):

\begin{align*}
\left\{
\begin{array}{rl}
	\hfill \Omega &=\quad \Omega^{\mathrm{Sch}}\quad\text{or}\quad \Omega^{\mathrm{Sch}}_p\quad\text{or}\quad \Omega^{\mathrm{Sch}}_{p,M}\\[1mm]
	\mathcal M &=\quad \big(\{K\subset\C\,|\, K \text{ closed}\}, \, d_{\mathrm{AW}}\big)\\[1mm]
	\Lambda &=\quad \{V\mapsto V(x) \,|\, x\in\R^d \}\\[2mm]
	\Xi &: \quad\Om \to \mathcal M;\quad
	V\mapsto \sigma(-\Delta+V),
\end{array}
\right.
\end{align*}
where $-\Delta+V$ is meant to be defined on $L^2(\R^d)$ with domain $H^2(\R^d)$, and $d_{\text{AW}}$ denotes the Attouch-Wets metric, which is a generalization of the Hausdorff metric for the case of sets which may be unbounded (see Appendix \ref{sec:metrics} for a brief discussion). Note that the spectrum $\sigma(-\Delta+V)$ is always non-empty in this case, so taking this metric makes sense.   For $p>d$ and $M>0$, the primary sets are defined as follows:
	\begin{align*}
	\Omega^{\mathrm{Sch}}_{p} &:= \{ V:\R^d\to\C \,|\, V \text{ is 1-periodic and }V|_{(0,1)^d}\in W^{1,p}((0,1)^d)  \},\\
	\Omega^{\mathrm{Sch}} &:=\bigcup_{p>d}\Omega^{\mathrm{Sch}}_p,\\
	\Om^{\mathrm{Sch}}_{p,M} &:= \{V\in\Om^{\mathrm{Sch}}_p\,|\, \|V\|_{W^{1,p}((0,1)^d)}\leq M\}.\\
	\end{align*}

Note that by Morrey's inequality, every $V\in\Omega^{\mathrm{Sch}}_p$ is continuous, and so the evaluation set $\Lambda$ which comprises point evaluations of $V$, is well-defined. In the language of the Solvability Complexity Index, the two parts of Theorem \ref{th:main2} can be expressed as follows:

\begin{itemize}
\item
Part (i) amounts to proving  that the computational problem for $\Omega^{\mathrm{Sch}}$ has an $\SCI$ value of $1$ (or, equivalently,   it belongs to $\Delta_2$). 
\item
Part (ii) amounts to showing that the computational problem for $\Om^{\mathrm{Sch}}_{p,M}$ belongs to $\Pi_1$, i.e. it can be approximated from above with explicit error bounds.
\end{itemize}

The proof of Theorem  \ref{th:main2}  is contained in  Subsection \ref{sec:proof-thm2}, and it relies on the following \emph{weaker} theorem:

\begin{theorem}\label{th:periodic_potential_mainth}

		The computational problem for $\Om^{\mathrm{Sch}}_{p}$ can be solved in one limit: $\SCI(\Om^{\mathrm{Sch}}_p) = 1$ (equivalently, $\Om^{\mathrm{Sch}}_p\in\Delta_2$).
\end{theorem}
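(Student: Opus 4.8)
The plan is to follow the template of Section~\ref{sec:matrices}: reduce to the Floquet--Bloch fibres, detect each fibre spectrum by a finite computation, and glue the results over a grid of quasi-momenta. Since the period is $1$, the Floquet--Bloch transform yields a unitary equivalence $-\Delta+V\cong\int_{[0,2\pi]^d}^{\oplus}H(k)\,dk$, where $H(k)=(-\i\nabla+k)^2+V$ acts on $L^2(\mathbb T^d)$ with domain $H^2(\mathbb T^d)$, so that $\sigma(-\Delta+V)=\overline{\bigcup_{k\in[0,2\pi]^d}\sigma(H(k))}$; each $H(k)$ is a bounded perturbation of an operator with compact resolvent, hence has discrete spectrum with only finitely many eigenvalues in any bounded ball, so the closure is harmless there. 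The structural point that keeps the (possible) non-self-adjointness in check is that, for a fixed shift $c>0$, the operator $(H_0(k)+c)^{-1}$ with $H_0(k):=(-\i\nabla+k)^2$ is \emph{compact} and depends norm-continuously on $k$; consequently, writing $\tilde K(z,k):=(H_0(k)+c)^{-1}(V-z-c)$ --- a compact operator, norm-continuous in $(z,k)$ --- one has $z\in\sigma(H(k))$ if and only if $I+\tilde K(z,k)$ fails to be invertible, equivalently $\sigma_{\min}(I+\tilde K(z,k))=0$ (for $T=I+\mathrm{compact}$, which is Fredholm of index $0$, the injectivity modulus $\sigma_{\min}(T)$ vanishes exactly when $T$ is not invertible). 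In the Fourier basis $\{e^{2\pi\i n\cdot x}\}_{n\in\Z^d}$ of $L^2(\mathbb T^d)$ the matrix of $\tilde K(z,k)$ has entries $(|2\pi n+k|^2+c)^{-1}\bigl(\hat V(n-m)-(z+c)\delta_{nm}\bigr)$, decaying like $|n|^{-2}$ along each row.

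The algorithm $\Gamma_n$ performs the following, all with finitely many arithmetic operations on finitely many point evaluations of $V$: (i) it queries $V$ on a sufficiently fine lattice $\tfrac1{M_n}\Z^d\cap(0,1)^d$, $M_n\to\infty$, and forms (Ces\`aro means of) Riemann sums $\hat V_n(m)$ approximating $\hat V(m)$ for $|m|\le 2N_n$ (convergent since $V$ is uniformly continuous), and from the same sample it reads off inflated estimates of $\|V\|_\infty$, of the shift $c$, and of a H\"older modulus of $V$ (available as $V\in W^{1,p}$ with $p>d$, hence H\"older by Morrey), all converging to finite limits; (ii) it truncates $\tilde K(z,k)$ to the modes $|n|\le N_n$, $N_n\to\infty$, obtaining a finite matrix $\tilde K_n(z,k)$ whose entries are built from the $\hat V_n$ and from rational approximations of $(|2\pi n+k|^2+c)^{-1}$ (the mesh $M_n$ being chosen fine enough relative to $N_n$ using the estimated H\"older modulus); (iii) on a linear grid $\Theta_n=\{k_i^{(n)}\}$ of $[0,2\pi]^d$ of spacing $\to0$ and a lattice $L_n=\tfrac1n(\Z+\i\Z)\cap B_{R_n}$ with $R_n\to\infty$, it outputs
\[
\Gamma_n(V):=\bigcup_{i}\bigl\{z\in L_n\ \bigm|\ \sigma_{\min}\bigl(I+\tilde K_n(z,k_i^{(n)})\bigr)\le\eps_n\bigr\},
\]
where the smallest singular value of the finite matrix plays the role the determinant played in Definition~\ref{def:periodic_matrix_alg} (it too is computed arithmetically to accuracy $1/n$), and the threshold $\eps_n\to0$ is assembled from $n$ together with the inflated estimates of (i), so as to dominate the combined truncation, quadrature and grid errors on $B_{R_n}$.

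Convergence is in the Attouch--Wets metric, i.e.\ local Hausdorff convergence, so it suffices to show, for each $R$ and $\eps$, that eventually $\sigma(-\Delta+V)\cap B_R\subset B_\eps(\Gamma_n(V))$ and $\Gamma_n(V)\cap B_R\subset B_\eps(\sigma(-\Delta+V))$. For the first (spectral inclusion): if $z\in\sigma(H(k))$ then $\sigma_{\min}(I+\tilde K(z,k))=0$, so a nearby lattice point $z'\in L_n$ and quasi-momentum $k_i^{(n)}\in\Theta_n$ satisfy $\sigma_{\min}(I+\tilde K_n(z',k_i^{(n)}))\le\eps_n$ once $n$ is large --- the grid, truncation and quadrature errors being absorbed into $\eps_n$ exactly as the threshold $n^{-1/2}$ was used in the proof of Theorem~\ref{th:matrix_mainth}. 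For the second (no pollution): $\sigma_{\min}$ depends continuously on its argument in operator norm, and $I+\tilde K_n(z,k)\to I+\tilde K(z,k)$ in operator norm \emph{uniformly} on $\overline{B_R}\times[0,2\pi]^d$ --- this is where the compactness and norm-continuity of $(H_0(k)+c)^{-1}$ and the tail bound $\|(I-P_N)\tilde K(z,k)\|\lesssim(\|V\|_\infty+R+c)\,N^{-2}$ enter --- so any limit point $z_*\in B_R$ of the sets $\Gamma_n(V)$ satisfies $\sigma_{\min}(I+\tilde K(z_*,\kappa_*))=0$ for some $\kappa_*\in[0,2\pi]^d$, whence $z_*\in\sigma(H(\kappa_*))\subset\sigma(-\Delta+V)$.

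The main obstacle is the absence of a priori bounds: on $\Om^{\mathrm{Sch}}_p$ (unlike $\Om^{\mathrm{Sch}}_{p,M}$) there is no quantitative handle on $V$, so the truncation, quadrature and grid errors carry no explicit rate and the threshold $\eps_n$ cannot be fixed in advance --- which is precisely why the claim is only $\SCI=1$ (a height-one tower) and not $\Pi_1$. This is circumvented by extracting the needed parameters ($\|V\|_\infty$, the shift $c$, a H\"older modulus) from the finite data and inflating them, which is legitimate exactly because no guaranteed error bound is asserted. The second delicate point --- and the reason one truncates the compact operator $\tilde K(z,k)$ rather than the differential operator $H(k)=D_k+B$ itself --- is spectral pollution: for the non-self-adjoint $H(k)$, with $D_k$ unbounded, a naive Galerkin truncation of $D_k+B$ need not have norm-convergent spectra and may produce spurious eigenvalues, whereas the finite sections of the compact $I+\tilde K(z,k)$ converge in operator norm, so that continuity of $\sigma_{\min}$ rules out spurious output in the limit.
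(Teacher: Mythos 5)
Your architecture runs parallel to the paper's (Floquet--Bloch fibres, a Birman--Schwinger-type reformulation, sampled Fourier coefficients, finite sections, thresholded detection on a $z$--$k$ grid, glued over expanding regions), but two of your devices are genuinely different and, in themselves, attractive: (a) you take the free resolvent at a \emph{fixed} shift, $\tilde K(z,k)=(H_0(k)+c)^{-1}(V-z-c)$, so that no value of $z$ is ever excluded --- this eliminates the paper's problem that $K(\lambda,\theta)$ degenerates on $\sigma(H_0)$ and hence removes the need for the second decomposition $\tilde H_0=-\Delta+2$ and Lemma \ref{lemma:sigma(H)_sigma(Htilde)}; and (b) you detect non-invertibility through the injectivity modulus of the finite section rather than through regularized perturbation determinants, which only requires operator-norm control and dispenses with the Schatten-class estimates and Theorem \ref{thm:Simon}. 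Both of these are sound in principle (the Fredholm index-$0$ argument and the block structure $I+P_N\tilde K P_N=(I_N+K_N)\oplus I$ are fine, and approximating $\sigma_{\min}$ to accuracy $1/n$ is a legitimate arithmetic operation).

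The genuine gap is in the truncation-error analysis, and it is a direct consequence of choice (a). Your stated bound $\|(I-P_N)\tilde K(z,k)\|\lesssim(\|V\|_\infty+R+c)N^{-2}$ controls only the left tail; the finite-section error also contains $P_N\tilde K(z,k)(I-P_N)=P_N(H_0(k)+c)^{-1}V(I-P_N)$, and because all of the smoothing in your unsymmetrized $\tilde K$ sits on the \emph{left}, this term is \emph{not} $O(N^{-2})$ and is not controlled by $\|V\|_\infty$ at all: its matrix entries are $(|2\pi n+k|^2+c)^{-1}\hat V(n-m)$ with $|n|\le N<|m|$, and for $|n|,|m|\approx N$, $|n-m|$ small, the weight is only $\approx N^{-2}$ while $\hat V(n-m)$ is a low-order coefficient; its decay to zero rests on quantitative Fourier decay of $V$, i.e.\ exactly the smoothness information that Lemma \ref{lemma:main_error_bound} spends most of its effort on. This matters because your spectral-inclusion direction requires a \emph{computable} threshold $\eps_n\to0$ that eventually dominates the true error $\|\tilde K-\tilde K_n\|$; qualitative norm convergence of finite sections of a compact operator (which is all your compactness argument yields for this term, and which does suffice for the no-pollution direction) cannot be folded into such a threshold. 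Two repairs are available: either symmetrize, replacing $\tilde K$ by the similar operator $(H_0(k)+c)^{-1/2}(V-z-c)(H_0(k)+c)^{-1/2}$, whose invertibility of $I+{}\cdot{}$ is equivalent and whose two-sided decay gives a truncation bound $\lesssim c^{-1/2}(\|V\|_\infty+R+c)N^{-1}$ depending only on (an eventually valid sample estimate of) $\|V\|_\infty$ --- this is in spirit the paper's $H_0^{-\frac12}B(\theta)H_0^{\frac12}(\lambda-H_0)^{-1}$, minus the Schatten machinery your $\sigma_{\min}$ device no longer needs; or keep $\tilde K$ and prove a quantitative bound on $\|P_N(H_0(k)+c)^{-1}V(I-P_N)\|$ from the estimated H\"older modulus via a Jackson/Fej\'er-type estimate on the high-frequency part of $V$, which your proposal gestures at (the Ces\`aro means) but does not carry out. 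Relatedly, you should make explicit why the ``inflated'' sample-based estimates of $\|V\|_\infty$ and of the H\"older seminorm eventually dominate the true values (they do, since $V$ is continuous and the sample mesh refines, and no certified bound is claimed); the paper sidesteps this entirely by fixing in advance the threshold $N^{-(\frac{1}{2d}-\frac{1}{2p})}$, which decays strictly slower than the provable error rate $N^{\frac1p-\frac1d}$, so that no estimation of constants is needed.
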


 Note that this theorem is evidently weaker than Theorem \ref{th:main2}(i) as the class of potentials $\Omega^{\mathrm{Sch}}_p$ considered here is a strict subset of the class $\Omega^{\mathrm{Sch}}=\bigcup_{p>d}\Omega^{\mathrm{Sch}}_p$ considered in Theorem \ref{th:main2}(i). The proof is constructive, i.e. we  provide an explicit algorithm that computes the spectrum of any given operator with $V\in \Om^{\mathrm{Sch}}_p$.
 Note that this problem is fundamentally different from the discrete problem \eqref{eq:periodic_computational_problem} where we could directly access  the matrix elements of the (discrete) operator. Instead, the evaluation set $\Lambda$ gives access to the point values of the potential. Hence our task is to construct a sequence of algorithms $\{\Gamma_n\}_{n\in\N}$, such that each $\Gamma_n$ computes its output from finitely many point evaluations of $V$ using finitely many algebraic operations.\\
 
 The proof of Theorem \ref{th:periodic_potential_mainth} is contained in Subsection \ref{sec:proof-weak-thm}. Prior to that, Subsection \ref{sec:floquet} is dedicated to the Floquet-Bloch transform, in Subsection \ref{sec:potential-estimates} we approximate the potential $V$ and in Subsection \ref{sec:provisional_algorithm} we define a provisional algorithm.

\subsection{Floquet-Bloch transform}
\label{sec:floquet}
The Floquet-Bloch transform for Schr\"odinger operators with periodic potentials is well-studied. The following lemma is a collection of results in \cite{RS4} (below, $\mathcal S(\R^d)$ denotes the Schwartz space of rapidly decaying functions).
\begin{lemma}[{\cite[Ch. XIII.16]{RS4}}]\label{lemma:Floquet}
	For $f\in\mathcal S(\R^d)$ and $\theta\in[0,2\pi]^d$, define the map
	\begin{align*}
		(\cU_\theta f)(x) &:= (2\pi)^{-\f d2}\sum_{n\in\Z^d} f(x+n)e^{\i \theta\cdot(x+n)}.
	\end{align*}
	Then the following hold.
	\begin{enumi}
		\item $\cU_\theta$ extends uniquely to a bounded operator on $L^2(\R^d)$;
		\item For any $f\in L^2(\R^d)$, $\cU_\theta f$ is 1-periodic;
		\item The map
 		\begin{align*}
 			\cU : L^2(\R^d) &\to \int_{[0,2\pi]^d}^\oplus L^2((0,1)^d)\, d\theta \\
 			f &\mapsto (\cU_\theta f)_{\theta\in[0,2\pi]^d}
 		\end{align*}
 		is unitary;
 		\item The inverse $\cU^{-1}$ is given by 
 		\begin{align*}
 			(\cU^{-1}g)(x+n) = (2\pi)^{-\f d2} \int_{[0,2\pi]^d} g(x,\theta)e^{-\i n \theta} \,d\theta;
 		\end{align*}
 		\item For $H=-\Delta+V$, $V\in\Om_p$ one has
 		\begin{align*}
 			\cU H \cU^{-1} &= \int_{[0,2\pi]^d}^\oplus H(\theta)\,d\theta,
 		\end{align*}
 		where 
 		\begin{align}
 			H(\theta) &= -(\nabla+\i\theta)^2 + V \label{eq:H(theta)},
 			\\
 			\dom(H(\theta)) &= H^2_{\mathrm{per}}\big((0,1)^d\big) \qquad \forall \theta\in[0,2\pi]^d.\nonumber
 		\end{align}
 		Moreover, the map $\theta\mapsto H(\theta)$ is analytic and
 		\begin{align}\label{eq:spectral_identity}
 			\sigma(H) = \bigcup_{\theta\in[0,2\pi]^d} \sigma(H(\theta)).
 		\end{align}
	\end{enumi}
\end{lemma}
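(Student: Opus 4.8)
\textbf{Proof proposal for Lemma \ref{lemma:Floquet}.}

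The plan is to reduce everything to the standard Floquet--Bloch theory as presented in \cite[Ch.~XIII.16]{RS4}, which is precisely what the statement cites, so the task here is really one of bookkeeping: checking that the explicitly written formulas for $\cU_\theta$, $\cU^{-1}$ and $H(\theta)$ agree with the abstract constructions in Reed--Simon, and that the domain and analyticity claims follow. First I would establish items (i)--(iv) for the transform itself. For $f\in\mathcal S(\R^d)$ the sum defining $(\cU_\theta f)(x)$ converges absolutely and locally uniformly because $f$ is Schwartz; replacing $x$ by $x+m$ for $m\in\Z^d$ and reindexing the lattice sum shows $\cU_\theta f$ is $1$-periodic, giving (ii) on the dense set. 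For (i) and (iii), the key computation is a Parseval/Plancherel identity: one verifies $\int_{[0,2\pi]^d}\|\cU_\theta f\|_{L^2((0,1)^d)}^2\,d\theta = \|f\|_{L^2(\R^d)}^2$ by expanding the square, integrating the resulting cross terms $e^{\i\theta\cdot(n-n')}$ over $[0,2\pi]^d$ to pick out $n=n'$ (up to the normalization constant $(2\pi)^{-d/2}$), and recognizing the remaining sum as $\sum_n \int_{(0,1)^d}|f(x+n)|^2\,dx$. This isometry extends $\cU$ to all of $L^2(\R^d)$, and surjectivity onto the direct integral follows by checking that $\cU^{-1}$ as written is a left and right inverse on the dense set of finite trigonometric sums in $\theta$ with Schwartz coefficients; this gives (iv).

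Next I would turn to (v), the diagonalization of $H$. The clean way is to handle $-\Delta$ and $V$ separately. For the multiplication operator $V$ (which is bounded since $V\in\Om_p\subset L^\infty$ by Morrey), one checks directly that $\cU_\theta(Vf) = V\cdot(\cU_\theta f)$ because $V$ is $1$-periodic, so $V$ is diagonalized with fiber operator $V$ acting on $L^2((0,1)^d)$. For $-\Delta$, one computes on $\mathcal S(\R^d)$ that $\cU_\theta(-\Delta f) = -(\nabla+\i\theta)^2(\cU_\theta f)$; the $e^{\i\theta\cdot(x+n)}$ twist in the definition of $\cU_\theta$ is exactly what converts $\partial_j$ into $\partial_j + \i\theta_j$. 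Adding these gives $\cU H\cU^{-1} = \int^\oplus H(\theta)\,d\theta$ with $H(\theta) = -(\nabla+\i\theta)^2 + V$ as in \eqref{eq:H(theta)}. The domain identification $\dom(H(\theta)) = H^2_{\mathrm{per}}((0,1)^d)$ follows from the fact that $\cU$ maps $H^2(\R^d) = \dom(H)$ onto the direct integral of $H^2_{\mathrm{per}}((0,1)^d)$ (again by a Fourier-series/Plancherel argument on the derivatives, since $V$ bounded means $\dom(H)=\dom(-\Delta)=H^2(\R^d)$), together with the fact that for each fixed $\theta$ the operator $-(\nabla+\i\theta)^2+V$ is self-adjoint on $H^2_{\mathrm{per}}$ by Kato--Rellich, $V$ being a bounded perturbation of $-(\nabla+\i\theta)^2$. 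Analyticity of $\theta\mapsto H(\theta)$ is the statement that, written as $-\Delta - 2\i\theta\cdot\nabla + |\theta|^2 + V$ on the fixed domain $H^2_{\mathrm{per}}$, the coefficients depend polynomially (hence analytically) on $\theta$, so this is an analytic family of type (A) in Kato's sense. Finally \eqref{eq:spectral_identity} is the general direct-integral spectral formula: $\sigma(H) = \overline{\bigcup_\theta \sigma(H(\theta))}$, and since each $H(\theta)$ has compact resolvent (the embedding $H^2_{\mathrm{per}}((0,1)^d)\hookrightarrow L^2((0,1)^d)$ is compact) its spectrum is discrete, the eigenvalues vary continuously in $\theta$ over the compact set $[0,2\pi]^d$, and hence the union is already closed, which removes the closure.

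The main obstacle, such as it is, is the domain statement in (v): one must be a little careful that the unitary $\cU$ really carries the Sobolev space $H^2(\R^d)$ onto the constant-fiber direct integral of $H^2_{\mathrm{per}}((0,1)^d)$ and not some larger or smaller space, and that this is compatible with the fiberwise twist $\nabla \mapsto \nabla + \i\theta$; the cleanest route is to note that the twist is an isomorphism of $H^2_{\mathrm{per}}((0,1)^d)$ onto itself for each $\theta$, so it suffices to check the untwisted statement, which is the classical periodic Plancherel theorem. Everything else is a direct computation of the kind the paper already flags as standard. Accordingly I would present the proof as: (a) the Plancherel identity proving (i)--(iv); (b) the conjugation computation for $-\Delta$ and for $V$ proving the formula in (v); (c) Kato--Rellich for fiberwise self-adjointness and type-(A) analyticity; (d) compact resolvent plus continuity of eigenvalues to get \eqref{eq:spectral_identity} with the closure removed; and for the parts that are purely a restatement of Reed--Simon, simply cite \cite[Ch.~XIII.16]{RS4}.
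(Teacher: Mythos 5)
Your proposal is correct and takes essentially the same route the paper itself relies on: it is the standard Floquet--Bloch argument of \cite[Ch.~XIII.16]{RS4} (Plancherel identity for (i)--(iv), fiberwise conjugation and type-(A) analyticity for (v)), combined with the observation that $\bigcup_{\theta}\sigma(H(\theta))$ is already closed by analyticity and compactness of $[0,2\pi]^d$, which is precisely the paper's one-line justification of \eqref{eq:spectral_identity}. Two minor points to tidy up: with the phase $e^{\i\theta\cdot(x+n)}$ as written the conjugation computation actually yields the fiber $-(\nabla-\i\theta)^2+V$ (an immaterial relabelling $\theta\mapsto-\theta$ thanks to $2\pi$-periodicity up to unitary equivalence), and since $V$ may be complex-valued the ``general direct-integral spectral formula'' is not automatic for non-self-adjoint fibers --- one should note that the norm-resolvent continuity of $\theta\mapsto H(\theta)$ on the compact set $[0,2\pi]^d$ (which your type-(A) analyticity already provides) gives the uniform resolvent bound needed for $\sigma(H)\subset\overline{\bigcup_\theta\sigma(H(\theta))}$.
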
 
The spectral identity \eqref{eq:spectral_identity} follows from the unitarity of $\mathcal U$ and a straightforward calculation, noting that $\bigcup_{\theta\in[0,2\pi]^d} \sigma(H(\theta))$ is closed by analyticity and periodicity in $\theta$.
\subsection{Estimating and Approximating the Potential}
\label{sec:potential-estimates}
The critical step is to compute approximations to the spectrum $\sigma(-\Delta+V)$   using only finitely many pointwise evaluations of $V$. This is done using the  Floquet-Bloch transform in conjunction with the Birman-Schwinger principle. The approximate potential is defined in \eqref{eq:approximated_fourier} and the critical error bound is stated in Lemma \ref{lemma:main_error_bound}.

\subsubsection{Birman-Schwinger principle}\label{sec:Birman-Schwinger}
Let $p>d,\; V\in\Omega_p^{\mathrm{Sch}}, \;\theta\in[0,2\pi]^d$ and let $H(\theta)$ be the corresponding Floquet-Bloch operator as in \eqref{eq:H(theta)}. Expanding the operator square, $H(\theta)$ can be written as
\begin{align*}
	H(\theta) &= -\Delta - 2\i\theta\cdot\nabla + |\theta|^2 + V.
\end{align*}
Let us choose the following decomposition of $H(\theta)$. We define
\begin{subequations} \label{eq:decomposition}
\begin{align}
	H_0 &:= -\Delta+1,  &\dom(H_0) &= H^2_{\text{per}}\big((0,1)^d\big),
	\\
	B(\theta) &:= - 2\i\theta\cdot\nabla + |\theta|^2 - 1 + V ,  &\dom(B(\theta)) &= H^1_{\text{per}}\big((0,1)^d\big).
\end{align} 
\end{subequations}
Then clearly one has $H(\theta) = H_0+B(\theta)$.
The auxiliary constant 1, which is added in $H_0$ and subtracted again in $B(\theta)$ was chosen for convenience, so that $H_0$ becomes a positive invertible  operator. Note that $B(\theta)$ is relatively compact with respect to $H_0$.
 For $\lambda\notin\sigma(H_0)$ one has
\begin{align*}
	\lambda - H_0 - B(\theta) &= H_0^{\f12}\big(I - H_0^{-\f12}B(\theta)(\lambda - H_0)^{-1}H_0^{\f12}\big)H_0^{-\f12}(\lambda - H_0),
\end{align*}
where $I$ denotes the identity operator on $L^2((0,1)^d)$ It follows that $\lambda - H_0 - B(\theta)$ is invertible if and only if $I - H_0^{-\nf12}B(\theta)(\lambda - H_0)^{-1}H_0^{\nf12}$ is invertible, and in that case
\begin{align*}
	(\lambda - H_0 - B(\theta))^{-1} &= (\lambda - H_0)^{-1}H_0^{\f12} \big(I - H_0^{-\f12}B(\theta)H_0^{\f12} (\lambda - H_0)^{-1}\big)^{-1}H_0^{-\f12}.
\end{align*}
 This identity (sometimes called the Birman-Schwinger principle) implies that 
 \begin{align*}
 	\lambda\in\C\setminus\sigma(H_0) \text{ is in }\sigma(H(\theta))
 	\quad \Leftrightarrow \quad
 	1\in\sigma\big(H_0^{-\f12}B(\theta)H_0^{\f12}(\lambda - H_0)^{-1}\big).
 \end{align*}
\subsubsection{Schatten class estimates} \label{sec:Schatten_estimates}
 We now study the analytic operator valued function 
 \begin{align*}
 	 K(\lambda,\theta) := H_0^{-\f12}B(\theta)H_0^{\f12}(\lambda - H_0)^{-1}.
 \end{align*}
We choose a Fourier basis for $L^2((0,1)^d)$, that is, we choose a numbering $\N\ni j\mapsto k_j\in 2\pi\Z^d$ such that $|k_j|$ is monotonically increasing  with $j$ and set 
\begin{equation*}
e_j := e^{\i k_j\cdot x}.
\end{equation*}
We note that $e_j\in\dom(H(\theta))$ for all $\theta$ and $\|e_j\|_{L^2((0,1)^d)}=1$ for all $j\in\N$. In this basis, the operators $H_0^{\nf12}$, $H_0$ and $\theta\cdot\nabla$ are all diagonal and one has
\begin{align}
	H_0^{\nf12} &= \diag\big((1+|k_j|^2)^{\nf12}\big) \label{eq:sqrt(H)_matrix}
	\\
	\lambda - H_0 &= \diag\big(\lambda - 1 -  |k_j|^2\big)\notag
	\\
	-2\i\theta\cdot\nabla &= \diag(2\,\theta\cdot k_j). \label{eq:theta*nabla_matrix}
\end{align} 
Therefore, we have 
\begin{align*}
	H_0^{-\f12}(-2\i\theta\cdot\nabla) H_0^{\f12}(\lambda - H_0)^{-1} &= \diag\left( \f{2\,\theta\cdot k_j}{\lambda - 1 - |k_j|^2} \right).
\end{align*}
Now the following lemma is easily proved.
\begin{lemma}[Schatten bound for $K$]\label{lemma:K_schatten_bound}
	For every $s>d$ the operator $K(\lambda,\theta)$ belongs to the Schatten class $\mathcal{C}_s$ and one has
	\begin{align*}
		\|K(\lambda,\theta)\|_{\mathcal{C}_s} &\leq \left(\f{2}{\pi}|\theta|  + \f{2}{\pi}\bigl\||\theta|^2 - 1 + V \bigr\| 
 		\right) C_\lambda \left(1-\f{d}{s}\right)^{-\f1s},
	\end{align*}
	where $C_\lambda := \sup_{j\in\N}\big| 1-\f{\lambda-1}{|k_j|^2} \big|^{-1}$ and $\|\cdot\|$ denotes the $L^2$ operator norm. 
\end{lemma}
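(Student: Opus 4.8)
The plan is to estimate the Schatten norm $\|K(\lambda,\theta)\|_{\mathcal{C}_s}$ by splitting $K(\lambda,\theta)$ according to the decomposition $B(\theta) = -2\i\theta\cdot\nabla + (|\theta|^2 - 1 + V)$ and treating the two resulting terms separately. Thus I would write
\begin{align*}
	K(\lambda,\theta) &= \underbrace{H_0^{-\f12}(-2\i\theta\cdot\nabla)H_0^{\f12}(\lambda - H_0)^{-1}}_{=:K_1(\lambda,\theta)} + \underbrace{H_0^{-\f12}(|\theta|^2-1+V)H_0^{\f12}(\lambda - H_0)^{-1}}_{=:K_2(\lambda,\theta)},
\end{align*}
and bound each $\|K_i(\lambda,\theta)\|_{\mathcal{C}_s}$ in turn, using the triangle inequality for the Schatten norm. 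The first term $K_1$ is diagonal in the Fourier basis with entries $\f{2\,\theta\cdot k_j}{\lambda - 1 - |k_j|^2}$ (this was computed just above the statement), so its Schatten norm is the $\ell^s$-norm of those entries. For the second term, I would factor out the operator norm of multiplication by $|\theta|^2 - 1 + V$ on $L^2((0,1)^d)$ — which is what the quantity $\||\theta|^2 - 1 + V\|$ denotes — leaving the purely diagonal factor $H_0^{-\f12}H_0^{\f12}(\lambda - H_0)^{-1} = (\lambda - H_0)^{-1}$, whose Schatten norm is again an explicit $\ell^s$ sum. The key point is that the auxiliary $+1$ in $H_0$ makes $H_0$ invertible, so all these diagonal entries are well-defined.

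The heart of the estimate is therefore a single scalar lemma: for $s > d$ one has
\begin{align*}
	\sum_{j\in\N} \f{1}{(1+|k_j|^2)^{s/2}} < \infty,
\end{align*}
with a bound of the asserted form. Since the $k_j$ range over $2\pi\Z^d$, this is (up to constants) the convergence of $\sum_{k\in\Z^d}(1+|k|^2)^{-s/2}$, which holds precisely when $s > d$; comparing the sum with the integral $\int_{\R^d}(1+|x|^2)^{-s/2}\,dx$ and evaluating in polar coordinates gives a constant of order $(1-d/s)^{-1}$, and the $s$-th root of this produces the factor $(1-\f{d}{s})^{-1/s}$ in the statement. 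The constants $\f{2}{\pi}$ come from the following elementary bounds on the diagonal entries: for $K_1$, one estimates $\bigl|\f{2\,\theta\cdot k_j}{\lambda-1-|k_j|^2}\bigr| \leq 2|\theta|\,\f{|k_j|}{|1+|k_j|^2|}\cdot\bigl|1 - \f{\lambda-1}{|k_j|^2}\bigr|^{-1}$ (roughly), and uses $\f{|k_j|}{1+|k_j|^2} \le \f12$ together with a comparison of $\sum (1+|k_j|^2)^{-s/2}$-type tails; for $K_2$, one writes $\f{1}{|\lambda - 1 - |k_j|^2|} = \f{1}{|1+|k_j|^2|}\,\bigl|1-\f{\lambda-1}{|k_j|^2+1}\bigr|^{-1}$ and absorbs the $\lambda$-dependent factor into $C_\lambda$. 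The precise bookkeeping of which $\f{2}{\pi}$ multiplies which term is a routine matter of tracking these scalar inequalities.

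The main obstacle — though it is more of a careful bookkeeping issue than a deep one — is getting the constant $C_\lambda = \sup_{j}\bigl|1 - \f{\lambda-1}{|k_j|^2}\bigr|^{-1}$ to uniformly control \emph{both} diagonal factors: one must rewrite $\lambda - 1 - |k_j|^2 = -(1+|k_j|^2)\bigl(1 - \f{\lambda-1}{1+|k_j|^2}\bigr)$ (note $1+|k_j|^2$ in the denominator, whereas $C_\lambda$ as defined uses $|k_j|^2$; for $k_j \ne 0$ these differ by a bounded factor, and the $j$ with $k_j = 0$ must be handled separately since $V$ and the constant shift keep that entry finite). Once the scalar inequalities are arranged so that each term is bounded by (a constant)$\times\,C_\lambda\times\bigl(\sum_j (1+|k_j|^2)^{-s/2}\bigr)^{1/s}$, the lemma follows by the triangle inequality in $\mathcal{C}_s$ and the integral comparison. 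One should also note that $B(\theta)$ is relatively $H_0$-compact (as already remarked in the excerpt), which guarantees $K(\lambda,\theta)$ is at least compact, so that talking about its Schatten class membership is legitimate; the quantitative bound above then upgrades this to $\mathcal{C}_s$ for every $s > d$.
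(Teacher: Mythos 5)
Your overall strategy is the same as the paper's: split $K=K_1+K_2$ according to the gradient and multiplication parts of $B(\theta)$, exploit diagonality in the Fourier basis, control the $\lambda$-dependence through $C_\lambda$, and reduce to the convergence of $\sum_j j^{-s/d}$ for $s>d$. However, your treatment of the potential term contains a genuine error. You propose to ``factor out'' the operator norm of multiplication by $|\theta|^2-1+V$ and be left with the diagonal factor $H_0^{-\f12}H_0^{\f12}(\lambda-H_0)^{-1}=(\lambda-H_0)^{-1}$. This implicitly uses an inequality of the form $\|A\,W\,C\|_{\mathcal{C}_s}\leq \|W\|\,\|AC\|_{\mathcal{C}_s}$ with $A=H_0^{-\f12}$, $C=H_0^{\f12}(\lambda-H_0)^{-1}$, which is false in general: the multiplication operator $W=|\theta|^2-1+V$ does not commute with $H_0^{\pm\f12}$, so the two half-powers cannot be brought together across $W$ (a simple $2\times2$ example with $A=\diag(M,1)$, $C=\diag(M^{-1},1)$ and $W$ a permutation shows the proposed inequality can fail by an arbitrarily large factor). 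The correct step, which is what the paper does, is the Schatten ideal property in the given order,
\begin{equation*}
\bigl\| H_0^{-\f12} W H_0^{\f12}(\lambda-H_0)^{-1}\bigr\|_{\mathcal{C}_s}\;\leq\;\bigl\|H_0^{-\f12}\bigr\|\,\|W\|\,\bigl\|H_0^{\f12}(\lambda-H_0)^{-1}\bigr\|_{\mathcal{C}_s},
\end{equation*}
with $\|H_0^{-\f12}\|=1$ and the \emph{Schatten-class} factor being $H_0^{\f12}(\lambda-H_0)^{-1}$, whose diagonal entries $(1+|k_j|^2)^{\f12}/(\lambda-1-|k_j|^2)$ decay like $C_\lambda|k_j|^{-1}\lesssim C_\lambda j^{-\f1d}$ --- this is precisely where the stated constant $\f2\pi C_\lambda(1-\f ds)^{-\f1s}$ comes from. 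Your route, even if the rearrangement were legal, would produce a bound built on $(\lambda-H_0)^{-1}$ (entries $\sim|k_j|^{-2}$) and hence a different constant than the one asserted.

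A secondary, smaller point: the lemma claims the explicit constant $\f2\pi$, which in the paper comes from the clean lattice-counting bound $|k_j|\geq\pi j^{\f1d}$ applied entrywise; your proposed comparison of $\sum_k(1+|k|^2)^{-s/2}$ with the integral $\int_{\R^d}(1+|x|^2)^{-s/2}\,dx$ in polar coordinates yields a dimension-dependent constant (involving the surface area of the sphere) and will not reproduce the stated bound without further work. With the ideal-property step corrected and the entrywise bound $|k_j|\geq\pi j^{\f1d}$ used in place of the integral comparison, your argument becomes exactly the paper's proof.
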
 
 \begin{proof}
 	Let $\lambda\in\C\setminus\sigma(H_0)$ and note that $C_\lambda<+\infty$ by our choice of $\lambda$. Observe that simple geometric considerations lead to the bound
		\begin{equation*}
		|k_j| \geq \pi j^{\f1d}.
		\end{equation*}
 	Then one has
 	\begin{align}
 		\left| \f{2\,\theta\cdot k_j}{\lambda - 1 - |k_j|^2}  \right| &\leq 2\f{|\theta| |k_j|}{|\lambda - 1 - |k_j|^2|} \nonumber
 		\\
 		&\leq 2C_\lambda |\theta| |k_j|^{-1} \nonumber
 		\\
 		&\leq \f2\pi C_\lambda |\theta| j^{-\f1d}.\label{eq:matrix_decay}
 	\end{align} 
 	Hence the characteristic numbers of $H_0^{-\f12}(-2\i\theta\cdot\nabla) H_0^{\f12}(\lambda - H_0)^{-1}$ are bounded by $\f2\pi C_\lambda |\theta| j^{-\f1d}$ and thus one has 
 	\begin{align*}
 		H_0^{-\f12}(-2\i\theta\cdot\nabla) H_0^{\f12}(\lambda - H_0)^{-1}
 		\in \mathcal{C}_s \quad\text{ for any }\quad s>d
 	\end{align*}
 	and
 	\begin{align}
 		\bigl\| H_0^{-\f12}(-2\i\theta\cdot\nabla) H_0^{\f12}(\lambda - H_0)^{-1} \bigr\|_{\mathcal{C}_s}
 		&\leq \f2\pi C_\lambda |\theta| \bigg(\sum_{j=1}^\infty j^{-\f sd}\bigg)^{\f1s}
 		\nonumber
 		\\
 		&\leq \f2\pi C_\lambda |\theta| \left(1-\f{d}{s}\right)^{-\f1s}.
 		\label{eq:HDHR_bound}
 	\end{align}
 	Next we turn to the potential term in $K(\lambda,\theta)$, that is, the operator $H_0^{-\nf12}\big(|\theta|^2 - 1 + V \big) H_0^{\nf12}(\lambda - H_0)^{-1}$. This is easily treated by the ideal property of $\mathcal{C}_s$, since the operator $|\theta|^2 - 1 + V$ is bounded. Indeed, we have for every $s>d$
 	\begin{align}
 		\bigl\| H_0^{-\nf12}\big(|\theta|^2 - 1 + V \big) H_0^{\nf12}(\lambda - H_0)^{-1} \bigr\|_{\mathcal{C}_s}
 		&\leq \bigl\| H_0^{-\nf12}\bigr\| \bigl\||\theta|^2 - 1 + V \bigr\| \bigl\|H_0^{\nf12}(\lambda - H_0)^{-1} \bigr\|_{\mathcal{C}_s}
 		\nonumber
 		\\
 		&\leq 
 		\bigl\||\theta|^2 - 1 + V \bigr\| 
 		\f{2}{\pi} C_\lambda \left(1-\f{d}{s}\right)^{-\f1s},
 		\label{eq:resolvent_decay}
 	\end{align}
	where the last line follows from a similar calculation to \eqref{eq:matrix_decay} and the fact that $\|H_0^{-\nf12}\|=1$  (this follows from the matrix representation \eqref{eq:sqrt(H)_matrix} and the fact that $k_1=0$).
 \end{proof}
 \begin{lemma}[Lipschitz continuity of $K$]\label{lemma:K_Lipschitz_bound}
	For every $s>d$ and $\lambda,\mu\in\C\setminus\sigma(H_0)$, $\theta,\vartheta\in[0,2\pi]^d$ one has
	\begin{align*}
		\|K(\lambda,\theta) - K(\mu,\vartheta)\|_{\mathcal{C}_s} &\leq 8 d^{\f12}  C_\lambda \left(1-\f{d}{s}\right)^{-\f1s} \big(|\theta-\vartheta| +  c_V C_\mu |\lambda-\mu| \big),
	\end{align*}
	where $C_\lambda$ is defined as in Lemma \ref{lemma:K_schatten_bound} and $c_V = \f12 + \pi d^{\f12} + (4\pi d^{\f12})^{-1} \|V-1\|_\infty$.
\end{lemma}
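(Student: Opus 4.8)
The plan is to reduce the estimate to the Fourier-diagonal computations already carried out in the proof of Lemma~\ref{lemma:K_schatten_bound}, by splitting the difference into a pure $\theta$-increment and a pure $\lambda$-increment: $K(\lambda,\theta)-K(\mu,\vartheta) = \big(K(\lambda,\theta)-K(\lambda,\vartheta)\big) + \big(K(\lambda,\vartheta)-K(\mu,\vartheta)\big)$, that is,
\begin{align*}
	K(\lambda,\theta) - K(\mu,\vartheta)
	&= H_0^{-\f12}\big(B(\theta)-B(\vartheta)\big)H_0^{\f12}(\lambda-H_0)^{-1}
	\\
	&\qquad\qquad + H_0^{-\f12}B(\vartheta)H_0^{\f12}\big((\lambda-H_0)^{-1}-(\mu-H_0)^{-1}\big).
\end{align*}
The first summand carries the spectral parameter $\lambda$ (and hence will produce the factor $C_\lambda$) together with only an increment in $\theta$, while the second summand is handled through the second resolvent identity. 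The order of the telescoping is chosen so that the Schatten-class factor produces $C_\lambda$ while the auxiliary operator-norm resolvent in the second summand produces $C_\mu$, matching the form of the claimed bound.

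For the first summand one uses $B(\theta)-B(\vartheta) = -2\i(\theta-\vartheta)\cdot\nabla + \big(|\theta|^2-|\vartheta|^2\big)$ — note that both the potential $V$ and the constant $-1$ cancel, which is why the $|\theta-\vartheta|$-term in the statement does not involve $\|V-1\|_\infty$. The term involving $-2\i(\theta-\vartheta)\cdot\nabla$ is, in the Fourier basis, the diagonal operator with entries $2(\theta-\vartheta)\cdot k_j/(\lambda-1-|k_j|^2)$, so the chain \eqref{eq:matrix_decay}--\eqref{eq:HDHR_bound} from the proof of Lemma~\ref{lemma:K_schatten_bound} applies verbatim with $|\theta|$ replaced by $|\theta-\vartheta|$, giving a $\mathcal{C}_s$-bound $\lesssim C_\lambda\,(1-d/s)^{-\f1s}\,|\theta-\vartheta|$. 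The scalar term commutes with $H_0^{\pm\f12}$, so its contribution is $(|\theta|^2-|\vartheta|^2)(\lambda-H_0)^{-1}$; here I would estimate $\big||\theta|^2-|\vartheta|^2\big| = |\theta+\vartheta|\,|\theta-\vartheta| \le 4\pi d^{\f12}|\theta-\vartheta|$ (using $\theta,\vartheta\in[0,2\pi]^d$) and bound $\|(\lambda-H_0)^{-1}\|_{\mathcal{C}_s}$ by the same Fourier-decay argument as in Lemma~\ref{lemma:K_schatten_bound} (its diagonal entries decay like $C_\lambda|k_j|^{-2} \lesssim j^{-2/d}$). Adding the two pieces bounds the first summand by $8 d^{\f12} C_\lambda (1-d/s)^{-\f1s}|\theta-\vartheta|$, with room to spare in the constant.

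For the second summand, $(\lambda-H_0)^{-1}-(\mu-H_0)^{-1} = (\mu-\lambda)(\lambda-H_0)^{-1}(\mu-H_0)^{-1}$ rewrites it as $(\mu-\lambda)\,K(\lambda,\vartheta)\,(\mu-H_0)^{-1}$, whose $\mathcal{C}_s$-norm is at most $|\lambda-\mu|\,\|K(\lambda,\vartheta)\|_{\mathcal{C}_s}\,\|(\mu-H_0)^{-1}\|$ by the ideal property of $\mathcal{C}_s$. For the first factor I would invoke Lemma~\ref{lemma:K_schatten_bound} together with the crude bounds $|\vartheta|\le 2\pi d^{\f12}$ and $\big\| |\vartheta|^2-1+V \big\| \le 4\pi^2 d + \|V-1\|_\infty$, which reproduce exactly the constant $8 d^{\f12} c_V$ with $c_V = \f12 + \pi d^{\f12} + (4\pi d^{\f12})^{-1}\|V-1\|_\infty$; for the second factor I would use $\|(\mu-H_0)^{-1}\| = \dist(\mu,\sigma(H_0))^{-1}\le C_\mu$. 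Adding the two contributions then yields the inequality as stated.

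The bookkeeping of constants is tedious but mechanical; the one point that requires genuine care is the zero Fourier mode $k_1=0$, where the $|k_j|^{-1}$- and $|k_j|^{-2}$-type decay estimates degenerate and $(\lambda-H_0)^{-1}$ contributes the isolated entry $(\lambda-1)^{-1}$. I would handle this mode separately: in the derivative part of $B(\theta)-B(\vartheta)$ its contribution vanishes identically since $\nabla e_1=0$, and in the remaining scalar/potential parts it is absorbed into $C_\lambda$ and $C_\mu$. This separate treatment of the constant mode, and keeping the $\lambda$/$\mu$ placement consistent across the telescoping, is essentially the only obstacle; everything else is a direct rerun of the argument for Lemma~\ref{lemma:K_schatten_bound}.
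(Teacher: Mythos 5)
Your proposal is correct and follows essentially the same route as the paper: the identical splitting $K(\lambda,\theta)-K(\mu,\vartheta)=H_0^{-\f12}\big(B(\theta)-B(\vartheta)\big)H_0^{\f12}(\lambda-H_0)^{-1}+(\mu-\lambda)K(\lambda,\vartheta)(\mu-H_0)^{-1}$ via the resolvent identity, with the $\theta$-increment handled by the diagonal Fourier estimates \eqref{eq:matrix_decay}--\eqref{eq:HDHR_bound} and the $\lambda$-increment by Lemma \ref{lemma:K_schatten_bound} together with $\|(\mu-H_0)^{-1}\|\le C_\mu$ and $|\vartheta|\le 2\pi d^{\f12}$, reproducing the stated constants $8d^{\f12}$ and $c_V$. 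The only cosmetic deviation is that you bound the scalar piece $(|\theta|^2-|\vartheta|^2)(\lambda-H_0)^{-1}$ separately instead of keeping it bundled with the gradient term as $(\theta-\vartheta)\cdot(-2\i\nabla+\theta+\vartheta)(\lambda-H_0)^{-1}$, which is harmless and yields the same (indeed slightly better) constant.
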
 
\begin{proof}
	First, note that 
	\begin{align}\label{eq:B-B}
		B(\theta) - B(\vartheta) = -2\i(\theta-\vartheta)\cdot\nabla + (\theta+\vartheta)\cdot(\theta-\vartheta).
	\end{align}
	Using \eqref{eq:B-B} together with the resolvent identity for $(\lambda-H_0)^{-1}$ one obtains
	\begin{align*}
		K(\lambda,\theta) - K(\mu,\vartheta) 
		&= H_0^{-\f12}(-2\i(\theta-\vartheta)\cdot\nabla + (\theta+\vartheta)\cdot(\theta-\vartheta))H_0^{\f12}(\lambda-H_0)^{-1}
		\\
		&\hspace{4cm} + H_0^{-\f12} B(\vartheta)H_0^{\f12}(\lambda-\mu)(\lambda-H_0)^{-1}(\mu-H_0)^{-1}
		\\
		&= (\theta-\vartheta)\cdot(-2\i\nabla + \theta + \vartheta)(\lambda-H_0)^{-1} + (\lambda-\mu)K(\lambda,\vartheta)(\mu-H_0)^{-1}.
	\end{align*}
	Taking norms in $\mathcal C_s$ and using the estimates from the proof of Lemma \ref{lemma:K_schatten_bound} (and in particular  \eqref{eq:HDHR_bound}) we obtain the bound
	\begin{align*}
		\|K(\lambda,\theta) - K(\mu,\vartheta)\|_{\mathcal C_s}
		&\leq |\theta-\vartheta| \big\|(-2\i\nabla + \theta+\vartheta)(\lambda-H_0)^{-1} \big\|_{\mathcal C_s} + |\lambda-\mu| \big\|K(\lambda,\vartheta)\big\|_{\mathcal C_s} \big\|(\mu-H_0)^{-1}\big\|
		\\
		&\leq |\theta-\vartheta|\f2\pi C_\lambda |\theta+\vartheta| \left(1-\f{d}{s}\right)^{-\f1s}
		+|\lambda-\mu| \big\|K(\lambda,\vartheta)\big\|_{\mathcal C_s} C_\mu.
	\end{align*}
	Finally, applying Lemma \ref{lemma:K_schatten_bound} and using $|\theta+\vartheta|\leq 4\pi d^{\f12}$ we obtain
	\begin{align*}
		\|K(\lambda,\theta) - K(\mu,\vartheta)\|_{\mathcal C_s}
		&\leq  |\theta-\vartheta| 8 C_\lambda d^{\f12} \left(1-\f{d}{s}\right)^{-\f1s}
		+|\lambda-\mu|  
		\left(4d^{\f12} + 8\pi d + \f2\pi \|V-1\|
 		\right) C_\lambda C_\mu \left(1-\f{d}{s}\right)^{-\f1s}
 		\\
 		&\leq 8 d^{\f12}  C_\lambda \left(1-\f{d}{s}\right)^{-\f1s}
 		\left(
	 		|\theta-\vartheta| 
			+  
			\left(\f12 + \pi d^{\f12} + (4\pi d^{\f12})^{-1} \|V-1\|
	 		\right) C_\mu |\lambda-\mu|
 		\right).
	\end{align*}
	This concludes the proof.
\end{proof}

 While Lemma \ref{lemma:K_Lipschitz_bound} gives precise information about the dependence of the Lipschitz constant of $K$ on all parameters, it will be useful for us to have a bound which is less precise but more explicit (and manifestly computable).
 \begin{corollary}\label{cor:K_Lipschitz_crude}
	Let $s>d$ and $\delta, R>0$. Then for any $\theta,\vartheta\in[0,2\pi]^d$ and $\lambda,\mu\in\C$ such that $|\lambda-(1+|k_j|^2)|, |\mu-(1+|k_j|^2)|>\delta$ for all $j$ and $|\lambda|,|\mu|<R$ one has
	\begin{align*}
		\|K(\lambda,\theta) - K(\mu,\vartheta)\|_{\mathcal{C}_s} &\leq \delta^{-2}48R^2\f{sd}{s-d}\f{3p-d}{p-d}\big(1+\|V\|_{W^{1,p}((0,1)^d)}\big) \big(|\theta-\vartheta| + |\lambda-\mu| \big),
	\end{align*}
\end{corollary}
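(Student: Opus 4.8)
The plan is to derive Corollary \ref{cor:K_Lipschitz_crude} directly from Lemma \ref{lemma:K_Lipschitz_bound} by replacing each of the three $\lambda,\mu,\theta,\vartheta$-dependent quantities appearing there ($C_\lambda$, $C_\mu$, the Sobolev-norm-dependent constant $c_V$, and the $s$-dependent factor $(1-d/s)^{-1/s}$) with cruder but manifestly computable upper bounds valid under the stated hypotheses $|\lambda-(1+|k_j|^2)|,|\mu-(1+|k_j|^2)|>\delta$ and $|\lambda|,|\mu|<R$.

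First I would bound $C_\lambda=\sup_j|1-(\lambda-1)/|k_j|^2|^{-1}$. Note $|1-(\lambda-1)/|k_j|^2|=|k_j|^{-2}\,|\,|k_j|^2-(\lambda-1)\,|=|k_j|^{-2}|\lambda-(1+|k_j|^2)|>|k_j|^{-2}\delta$; but we need an \emph{upper} bound on $C_\lambda$, i.e.\ a \emph{lower} bound on $|1-(\lambda-1)/|k_j|^2|$ that does not degenerate as $|k_j|\to\infty$. For large $|k_j|$ the quantity $(\lambda-1)/|k_j|^2$ is small (since $|\lambda|<R$), so $|1-(\lambda-1)/|k_j|^2|\geq \tfrac12$ once $|k_j|^2\geq 2(R+1)$; for the finitely many small $|k_j|$ with $|k_j|^2<2(R+1)$ one uses $|1-(\lambda-1)/|k_j|^2|=|k_j|^{-2}|\lambda-(1+|k_j|^2)|\geq \delta/|k_j|^2\geq \delta/(2(R+1))$. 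Hence $C_\lambda\leq \max\{2,\,2(R+1)/\delta\}$, and since we may assume $\delta\leq 1\leq R$ (otherwise the statement is trivially implied by a weaker constant) this is $\leq 4R/\delta$, so $C_\lambda C_\mu\leq 16R^2/\delta^2$ and $C_\lambda\leq 4R/\delta$. Next, Morrey's inequality controls $\|V-1\|_\infty$ by a constant times $\|V-1\|_{W^{1,p}((0,1)^d)}\leq C_{\mathrm{Mor}}(1+\|V\|_{W^{1,p}})$ with $C_{\mathrm{Mor}}$ depending only on $p,d$ through the factor $(3p-d)/(p-d)$ (this is where that ratio enters); so $c_V=\tfrac12+\pi d^{1/2}+(4\pi d^{1/2})^{-1}\|V-1\|_\infty$ is bounded by a $(p,d)$-dependent multiple of $(1+\|V\|_{W^{1,p}})\,(3p-d)/(p-d)$. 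Finally $(1-d/s)^{-1/s}\leq (1-d/s)^{-1}=s/(s-d)$ for $s>d$, supplying the $sd/(s-d)$ factor (the extra $d$ absorbs constants like $d^{1/2}$).

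Assembling: Lemma \ref{lemma:K_Lipschitz_bound} gives
\[
\|K(\lambda,\theta)-K(\mu,\vartheta)\|_{\mathcal C_s}\leq 8d^{1/2}C_\lambda\Big(1-\tfrac ds\Big)^{-1/s}\big(|\theta-\vartheta|+c_VC_\mu|\lambda-\mu|\big),
\]
and substituting the four bounds above — $C_\lambda\leq 4R/\delta$, $C_\mu\leq 4R/\delta$, $(1-d/s)^{-1/s}\leq s/(s-d)$, and $c_V\leq (\text{const})\cdot\tfrac{3p-d}{p-d}(1+\|V\|_{W^{1,p}})$ — together with the crude estimate $|\lambda-\mu|\leq 2R$ absorbed where convenient, yields a bound of the shape $\delta^{-2}\,(\text{numerical const})\,R^2\,\tfrac{sd}{s-d}\,\tfrac{3p-d}{p-d}\,(1+\|V\|_{W^{1,p}})\,(|\theta-\vartheta|+|\lambda-\mu|)$; one then checks the numerical constant is $\leq 48$. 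The only real subtlety is the uniform (in $j$) upper bound on $C_\lambda$, i.e.\ splitting into the finitely many low frequencies where the $\delta$-separation does the work and the tail where $|\lambda|<R$ forces the resolvent factor to be $O(1)$; everything else is bookkeeping of explicit constants and a single invocation of Morrey's embedding on the unit cube. I expect this constant-tracking to be the bulk of the (short) argument, with no conceptual obstacle.
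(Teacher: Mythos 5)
Your proposal follows the paper's proof essentially verbatim: the paper obtains the corollary in one line from Lemma \ref{lemma:K_Lipschitz_bound} via exactly the crude substitutions you describe, namely $C_\lambda,C_\mu\leq R\delta^{-1}$ (from the $\delta$-separation at low frequencies and $|\lambda|,|\mu|<R$ at high frequencies), the Morrey-type bound $\|V\|_\infty\leq \frac{3p-d}{p-d}\|V\|_{W^{1,p}((0,1)^d)}$, and $(1-d/s)^{-1/s}\leq \frac{s}{s-d}$. The only wrinkle is quantitative: with your looser bound $C_\lambda\leq 4R/\delta$ the product $C_\lambda C_\mu c_V$ can exceed the stated constant $48\,\delta^{-2}R^2\frac{sd}{s-d}\frac{3p-d}{p-d}\big(1+\|V\|_{W^{1,p}}\big)$ when $p$ is large and $\|V\|_{W^{1,p}}$ is small, so to recover the literal factor $48$ you should sharpen your frequency split to $C_\lambda\leq R\delta^{-1}$ (valid under $R\geq 1\geq\delta$, which the paper tacitly assumes and which holds in its application, where $R=|z_0|+1$ and $\delta\leq\tfrac12$).
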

\begin{proof}
	This follows immediately from Lemma \ref{lemma:K_Lipschitz_bound} via rather crude estimates, noting that $C_\lambda\leq R\delta^{-1}$ and $\|V\|_\infty \leq \f{3p-d}{p-d}\|V\|_{W^{1,p}((0,1)^d)}$ (cf. \cite[Ch. 9.3]{Brezis}).
\end{proof}
\subsubsection{Approximation of the potential}
Next we study the matrix representation of the potential $V$ in the Fourier basis $\{e_j\}_{j\in\N}$ and its approximations. First, we observe that in the Fourier basis $\{e_j\}_{j\in\N}$ one has 
 	\begin{align*}
 		\p{e_j , Ve_m} = \hat V_{k_m-k_j}
 	\end{align*}
 	where $\hat V_k$ denote the Fourier coefficients of $V$. Indeed, a direct calculation gives
 	\begin{align*}
 		\p{e_j , Ve_m} = \int_{(0,1)^d} \overline{e^{\i k_j\cdot x}}V(x)e^{\i k_m\cdot x}\,dx
 		= \int_{(0,1)^d} V(x)e^{ \i(k_m-k_j)\cdot x} \, dx
 		= \hat V_{k_m-k_j}.
 	\end{align*}
 Now, we want to build a computable, finite size approximation of the matrix $(V_{jm}) = (\p{e_j , Ve_m})$. We start by approximating the Fourier coefficients $\hat V_k$.
 \begin{lemma}\label{lemma:integral_approx}
 	Let $n\in\N$ and define the lattice $I_n:=\{\f{m}{n}\,|\,m=0,\dots,n-1\}^d\subset (0,1)^d$.
 	For every $f\in W^{1,p}((0,1)^d)$, $p>d$, one has
 	\begin{align*}
 		\left| n^{-d}\sum_{\xi\in I_n} f(\xi) - \int_{(0,1)^d} f(x)\,dx \right| \leq \f{2n^{-1+\nf dp}}{1-\nf dp}\|\nabla f\|_{L^p((0,1)^d)}
 	\end{align*}
 \end{lemma}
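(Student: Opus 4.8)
The plan is to estimate the quadrature error by a one–dimensional argument applied coordinatewise, reducing the multidimensional statement to an iterated application of the $d=1$ case. First I would treat the single-variable case: for $g\in W^{1,p}((0,1))$ with $p>1$, the midpoint/left-endpoint rule on the uniform mesh of size $n^{-1}$ has error controlled by $\int_0^1 |g'|$-type quantities, but to get the exponent $-1+1/p$ one uses H\"older rather than a crude $L^1$ bound. Concretely, on each subinterval $I_m=(\tfrac{m}{n},\tfrac{m+1}{n})$ write
\begin{align*}
	\int_{I_m} g(x)\,dx - n^{-1}g\!\left(\tfrac{m}{n}\right) = \int_{I_m}\bigl(g(x)-g\!\left(\tfrac mn\right)\bigr)\,dx = \int_{I_m}\int_{m/n}^{x} g'(t)\,dt\,dx,
\end{align*}
so that $\bigl|\int_{I_m} g - n^{-1}g(\tfrac mn)\bigr| \le n^{-1}\int_{I_m}|g'(t)|\,dt$. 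Summing over $m$ gives a bound $n^{-1}\|g'\|_{L^1(0,1)}$; to upgrade this to the stated power one instead keeps the double integral and applies H\"older in $t$ on $I_m$, obtaining $\int_{I_m}|g'|\le n^{-1+1/p'}\|g'\|_{L^p(I_m)}$ where $1/p+1/p'=1$, i.e. $1/p' = 1-1/p$; then summing the subinterval contributions with H\"older in $m$ (i.e. $\ell^{p}$–$\ell^{p'}$ over the index set of $n$ subintervals) produces the factor $n^{-1+1/p}$ together with $\|g'\|_{L^p(0,1)}$, and the constant $\tfrac{1}{1-d/p}$ — in dimension one, $\tfrac{1}{1-1/p}$ — comes from summing the geometric-type tail that arises when one is slightly more careful, or equivalently from the continuous embedding $W^{1,p}(0,1)\hookrightarrow C^{0,1-1/p}$ whose H\"older seminorm is bounded by $\tfrac{1}{1-1/p}\|g'\|_{L^p}$ (Morrey). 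I would in fact present it via the Morrey/H\"older-continuity route: $|g(x)-g(\tfrac mn)|\le \tfrac{1}{1-1/p}\|g'\|_{L^p(I_m)}\,|x-\tfrac mn|^{1-1/p}$, integrate over $I_m$, sum, and apply H\"older over $m$.

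Next I would pass to dimension $d$ by a telescoping argument along coordinate directions. Write $Q_n f := n^{-d}\sum_{\xi\in I_n} f(\xi)$ and $If := \int_{(0,1)^d} f$. Decompose $Q_n - I$ as a sum of $d$ terms, the $\ell$-th of which applies the one-dimensional error estimate in the $\ell$-th variable while integrating exactly (resp. summing over the grid) in the remaining variables; schematically
\begin{align*}
	Q_n f - I f = \sum_{\ell=1}^{d} \Bigl( \text{(grid sum in coords }1,\dots,\ell-1\text{)} \otimes (Q_n^{(\ell)}-I^{(\ell)}) \otimes \text{(integral in coords }\ell+1,\dots,d) \Bigr) f.
\end{align*}
Each term is bounded, using the one-dimensional estimate and the fact that grid-averaging in a variable is an $L^p\to L^p$ contraction on the relevant slices (Jensen), by $\tfrac{2 n^{-1+1/p}}{1-1/p}\|\partial_\ell f\|_{L^p((0,1)^d)}$ — I would be a little careful that the one-dimensional constant in terms of $\|\cdot\|_{L^p}$ over a unit interval is uniform in the fixed values of the other coordinates and survives the Fubini integration. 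Summing the $d$ terms and crudely bounding $n^{-1+1/p}\le n^{-1+d/p}$ (since $d\ge1$ and $n\ge1$, so $n^{-1+1/p}\le n^{-1+d/p}$) and $\sum_\ell \|\partial_\ell f\|_{L^p}\le d^{1/2}\|\nabla f\|_{L^p}$ if one wants, or simply absorbing the number of terms into constants, yields the claimed inequality with the stated constant $\tfrac{2 n^{-1+d/p}}{1-d/p}$; one should check the constant bookkeeping so that the single factor $\tfrac{1}{1-d/p}$ (rather than $\tfrac{d}{1-1/p}$) emerges — this is a matter of using the sharp Morrey constant $\tfrac{1}{1-d/p}$ for $W^{1,p}((0,1)^d)$ directly on cubes of side $n^{-1}$ rather than iterating the one-dimensional constant.

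The main obstacle I anticipate is precisely this constant-tracking: getting exactly $\tfrac{2n^{-1+d/p}}{1-d/p}$ (and not merely $C(d,p)\,n^{-1+d/p}$) requires either invoking the sharp Morrey inequality on small cubes with its explicit constant, or performing the H\"older summation over the $n^d$ subcubes carefully so that the numerically irrelevant geometric factors telescope to $\tfrac{1}{1-d/p}$. If one is willing to lose a dimensional constant the whole argument is routine; pinning the constant down is the only delicate point, and I would handle it by working directly with the cube decomposition $\{(0,1)^d = \bigsqcup_{\xi\in I_n}(\xi+(0,n^{-1})^d)\}$, estimating $|f(x)-f(\xi)|$ on each subcube by the Morrey-type bound $\tfrac{1}{1-d/p} n^{-1+d/p}\,n^{d/p}\,\|\nabla f\|_{L^p(\xi+(0,n^{-1})^d)}$ — wait, more precisely $|f(x)-f(\xi)|\le C_{d,p}\,|x-\xi|^{1-d/p}\|\nabla f\|_{L^p(\text{subcube})}$ with $C_{d,p}$ the sharp constant — then integrating over the subcube, summing, and applying H\"older in $\xi$ over the $n^d$ indices, which reproduces the factor $n^{-1+d/p}$ and the full $L^p$ norm of $\nabla f$ over $(0,1)^d$. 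The factor $2$ is slack and leaves room for the bookkeeping.
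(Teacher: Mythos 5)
Your final, committed argument -- decompose $(0,1)^d$ into the $n^d$ subcubes $\xi+(0,\tfrac1n)^d$, estimate $|f(x)-f(\xi)|$ on each subcube by Morrey's inequality, integrate and sum -- is essentially the paper's proof. The only differences are in the bookkeeping: the paper simply applies Morrey with the \emph{global} norm, $|f(x)-f(\xi)|\le \frac{2n^{-1+d/p}}{1-d/p}\|\nabla f\|_{L^p((0,1)^d)}$ (citing the explicit inequality (28) in the proof of Brezis, Th.\ 9.12, which supplies precisely this constant at scale $|x-\xi|\lesssim n^{-1}$), so that summing the $n^d$ per-cube errors of size $n^{-d}\cdot\frac{2n^{-1+d/p}}{1-d/p}\|\nabla f\|_{L^p((0,1)^d)}$ gives the claim with no H\"older step in $\xi$ and no constant-chasing; your variant with local norms $\|\nabla f\|_{L^p(\text{subcube})}$ plus H\"older over the $n^d$ indices is also correct and in fact yields the stronger rate $n^{-1}$ (with the same cube-Morrey constant), which trivially implies the stated $n^{-1+d/p}$ bound. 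One caution about the coordinatewise telescoping route you sketch first: its intermediate terms require restricting $\partial_\ell f\in L^p$ to hyperplanes through the grid points, which is not well defined (and the claimed ``grid-averaging is an $L^p\to L^p$ contraction on slices'' fails for point evaluations), so it is good that you discard it in favour of the direct cube decomposition.
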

 \begin{proof}
 	We write 
 	\begin{align*}
 		\int_{(0,1)^d} f(x)\,dx = \sum_{\xi\in I_n} \int_{(0,\f1n)^d+\xi} f(x)\,dx.
 	\end{align*}
 	Then, comparing the two sums term by term, we have
 	\begin{align*}
 		\left|\int_{(0,\f1n)^d+\xi} f(x)\,dx - n^{-d} f(\xi)\right|
 		&= \left|\int_{(0,\f1n)^d+\xi} f(x) - f(\xi) \,dx\right|
 		\\
 		&\leq \int_{(0,\f1n)^d+\xi} |f(x) - f(\xi)| \,dx
 		\\
 		&\leq \int_{(0,\f1n)^d+\xi} \f{2n^{-1+\nf dp}}{1-\nf dp}\|\nabla f\|_{L^p((0,1)^d)} \,dx
 		\\
 		&= \f{2n^{-1+\nf dp}}{1-\nf dp}\|\nabla f\|_{L^p((0,1)^d)} \int_{(0,\f1n)^d+\xi} \,dx
 		\\
 		&= \f{2n^{-1+\nf dp}}{1-\nf dp}\|\nabla f\|_{L^p((0,1)^d)} n^{-d},
 	\end{align*}
 	where Morrey's inequality was used in the third line (cf. (28) in the proof of \cite[Th. 9.12]{Brezis}). Summing these inequalities over $I_n$, we finally obtain
 	\begin{align*}
 		\left| n^{-d}\sum_{\xi\in I_n} f(\xi) - \int_{(0,1)^d} f(x)\,dx \right| 
 		&\leq \sum_{\xi\in I_n} \left|\int_{(0,\f1n)^d+\xi} f(x)\,dx - n^{-d} f(\xi)\right|
 		\\
 		&\leq \sum_{\xi\in I_n} \f{2n^{-1+\nf dp}}{1-\nf dp}\|\nabla f\|_{L^p((0,1)^d)} n^{-d}
 		\\
 		&\leq  \f{2n^{-1+\nf dp}}{1-\nf dp}\|\nabla f\|_{L^p((0,1)^d)}.
 	\end{align*}
 \end{proof}
 Let us introduce the \emph{approximate Fourier coefficients} for $k\in2\pi\Z^d$ and $n\in\N$,
 \begin{align}\label{eq:approximated_fourier}
 	\hat V_k^{\text{appr},n} := n^{-d}\sum_{\xi\in I_n} V(\xi)e^{\i k\cdot \xi}.
 \end{align}
 Note that the $\hat V_k^{\text{appr},n}$ can be computed in finitely many operations from the information provided in $\Lambda$.
  Lemma \ref{lemma:integral_approx} applied to the function $f(x)=V(x)e^{\i k\cdot x}$ leads to the error estimate
\begin{align}
	\left| \hat V_k^{\text{appr},n} - \hat V_k \right| 
	&\leq \f{2n^{-1+\nf dp}}{1-\nf dp}\|\nabla (Ve^{\i k\cdot x})\|_{L^p((0,1)^d)}
	\nonumber
	\\
	&\leq \f{2n^{-1+\nf dp}}{1-\nf dp}\big(\|\nabla V\,e^{\i k\cdot x}\|_{L^p((0,1)^d)} + \| V\nabla e^{\i k\cdot x}\|_{L^p((0,1)^d)} \big)
	\nonumber
	\\
	&= \f{2n^{-1+\nf dp}}{1-\nf dp}\big(\|\nabla V\|_{L^p((0,1)^d)} + |k|\,\| V\|_{L^p((0,1)^d)} \big)
	\nonumber
	\\
	&\leq \f{2n^{-1+\nf dp}}{1-\nf dp}(1+|k|)\| V\|_{W^{1,p}((0,1)^d)}
	\label{eq:Fourier_approximation_estimate}
\end{align}
 Next, we define the approximate potential matrix
 \begin{align*}
 	V^{\text{appr},n} := \big( \hat V^{\text{appr},n}_{k_m-k_j} \big)_{m,j\in\N}.
 \end{align*}
 We remark that the approximated potential matrix cannot be computed in finitely many algebraic operations from the point values of $V$, because it has infinitely many entries. This issue will be addressed next.
\begin{lemma}[Main Error Bound]\label{lemma:main_error_bound}
	For $N\in\N$ let $\h_N=\mathrm{Span}\{e_1,\dots,e_N\}$ and let $P_N:L^2((0,1)^d)\to\h_N$ be the orthogonal projection. Moreover, define
	 \begin{align*}
 	 K^{\mathrm{appr}}_{n}(\lambda,\theta) := H_0^{-\f12}\big(- 2\i\theta\cdot\nabla + |\theta|^2 - 1 + V^{\mathrm{appr},n}\big)H_0^{\f12}(\lambda - H_0)^{-1}.
	 \end{align*}
	Then for every $s>d$ one has
	\begin{align*}
		\big\|K(\lambda, \theta) - P_NK^{\mathrm{appr}}_{n}(\lambda, \theta)P_N \big\|_{\mathcal{C}_s} 
		&\leq
		C_\lambda\bigg(
		C_{s,d}^1 |\theta| N^{\f{1}{s}-\f{1}{d}}
		+
		C_{s,p,d}^2 \f{N^{1+\f{1}{d}}}{n^{1-\f dp}} \| V\|_{W^{1,p}((0,1)^d)}
		\\
		&\hspace{3cm}
		+
		C_{s,d}^3 N^{\f1s-\f1d} \Big(\big||\theta|^2-1 \big| + \|V\|_{L^\infty((0,1)^d)} \Big)
		\bigg),
	\end{align*}
	where we recall that $C_\lambda = \sup_{j\in\N}\big| 1-\f{\lambda-1}{|k_j|^2} \big|^{-1}$ and $C_{s,d}^1 ,\,C_{s,p,d}^2, \,C_{s,d}^3$ are explicit constants independent of $n,N,\lambda,\theta$.
\end{lemma}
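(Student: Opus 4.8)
The plan is to use the triangle inequality
\[
	\bigl\|K-P_NK^{\mathrm{appr}}_nP_N\bigr\|_{\mathcal C_s}
	\le
	\bigl\|K-P_NKP_N\bigr\|_{\mathcal C_s}
	+\bigl\|P_N\bigl(K-K^{\mathrm{appr}}_n\bigr)P_N\bigr\|_{\mathcal C_s}
\]
(all operators evaluated at $(\lambda,\theta)$), with the first (``truncation'') term responsible for the two summands carrying $N^{\f1s-\f1d}$ and the second (``quadrature'') term for the summand carrying $\f{N^{1+\f1d}}{n^{1-\f dp}}$. Throughout, $C_\lambda$ should enter only through the diagonal factor $H_0^{\f12}(\lambda-H_0)^{-1}$, for which Lemma~\ref{lemma:K_schatten_bound} already supplies $\|H_0^{\f12}(\lambda-H_0)^{-1}\|_{\mathcal C_s}\le\f2\pi C_\lambda\bigl(1-\f ds\bigr)^{-\f1s}$, together with the entrywise decay $\bigl|\f{(1+|k_j|^2)^{1/2}}{\lambda-1-|k_j|^2}\bigr|\lesssim C_\lambda j^{-\f1d}$ from \eqref{eq:matrix_decay}.

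\smallskip\noindent\emph{Truncation term.} Writing $K-P_NKP_N=(I-P_N)K+P_NK(I-P_N)$ and using that $H_0^{\f12}$, $H_0^{-\f12}$ and $H_0^{\f12}(\lambda-H_0)^{-1}$ are diagonal in the Fourier basis (so they commute with $P_N$), I would split $B(\theta)$ as in \eqref{eq:decomposition} into $-2\i\theta\cdot\nabla$, the scalar $|\theta|^2-1$, and $V$. The two scalar/diagonal parts turn $K$ into diagonal operators, so $(I-P_N)$ leaves merely the $\ell^s$-tail $\sum_{j>N}(\cdot)^s$ of the explicit entries, which by the decay above is $\lesssim C_\lambda(|\theta|+\big||\theta|^2-1\big|)N^{\f1s-\f1d}$ (using $\sum_{j>N}j^{-s/d}\le\f d{s-d}N^{1-s/d}$). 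The $V$-part is controlled by the ideal property of $\mathcal C_s$: for $P_NK(I-P_N)$ one uses $\|K(I-P_N)\|_{\mathcal C_s}\le\|H_0^{-\f12}B(\theta)\|_{\mathrm{op}}\,\|H_0^{\f12}(\lambda-H_0)^{-1}(I-P_N)\|_{\mathcal C_s}$, where the first factor is $\le 2|\theta|+\big||\theta|^2-1\big|+\|V\|_{L^\infty}$ (since $H_0^{-\f12}\nabla$ and $H_0^{-\f12}$ are contractions) and the second is an $\ell^s$-tail $\lesssim C_\lambda N^{\f1s-\f1d}$; for $(I-P_N)K$ one uses $\|H_0^{-\f12}(I-P_N)\|_{\mathrm{op}}=(1+|k_{N+1}|^2)^{-\f12}\lesssim N^{-\f1d}$ together with $\|V\|_{\mathrm{op}}=\|V\|_{L^\infty}$ and $\|H_0^{\f12}(\lambda-H_0)^{-1}\|_{\mathcal C_s}\lesssim C_\lambda$. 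Collecting yields $\|K-P_NKP_N\|_{\mathcal C_s}\le C_\lambda\bigl(C^1_{s,d}|\theta|+C^3_{s,d}(\big||\theta|^2-1\big|+\|V\|_{L^\infty((0,1)^d)})\bigr)N^{\f1s-\f1d}$.

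\smallskip\noindent\emph{Quadrature term.} Here $K-K^{\mathrm{appr}}_n=H_0^{-\f12}(V-V^{\mathrm{appr},n})H_0^{\f12}(\lambda-H_0)^{-1}$, so $P_N(K-K^{\mathrm{appr}}_n)P_N$ is finite-rank (rank $\le N$) with $(j,m)$-entry, for $j,m\le N$, equal to $(1+|k_j|^2)^{-\f12}(\hat V_{k_m-k_j}-\hat V^{\mathrm{appr},n}_{k_m-k_j})\f{(1+|k_m|^2)^{1/2}}{\lambda-1-|k_m|^2}$. Estimating its Hilbert--Schmidt norm entrywise: the first factor is $\le1$; the middle is $\le\f{2n^{-1+\f dp}}{1-\f dp}(1+|k_m-k_j|)\|V\|_{W^{1,p}}$ by \eqref{eq:Fourier_approximation_estimate}; the lattice bound $|k_j|\ge\pi j^{\f1d}$ used in Lemma~\ref{lemma:K_schatten_bound}, together with the matching upper bound from the same count, gives $|k_m-k_j|\lesssim N^{\f1d}$ for $j,m\le N$; and the last factor is $\le\|H_0^{\f12}(\lambda-H_0)^{-1}\|_{\mathcal C_s}\lesssim C_\lambda$. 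Thus each of the $\lesssim N^2$ entries is $\lesssim C_\lambda\f{n^{-1+\f dp}}{1-\f dp}N^{\f1d}\|V\|_{W^{1,p}}$ and the Hilbert--Schmidt norm is $\lesssim C_\lambda\f{n^{-1+\f dp}}{1-\f dp}N^{1+\f1d}\|V\|_{W^{1,p}}$; since $\|\cdot\|_{\mathcal C_s}\le\|\cdot\|_{\mathcal C_2}$ for $s\ge2$ (and, in the one remaining case $d=1$, $s<2$, $\|\cdot\|_{\mathcal C_s}\le N^{\f1s-\f12}\|\cdot\|_{\mathcal C_2}$ combined with the sharper column decay $\bigl|\f{(1+|k_m|^2)^{1/2}}{\lambda-1-|k_m|^2}\bigr|\lesssim C_\lambda|k_m|^{-1}$) this is $\le C_\lambda C^2_{s,p,d}\f{N^{1+\f1d}}{n^{1-\f dp}}\|V\|_{W^{1,p}((0,1)^d)}$. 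Adding the two bounds gives the claim.

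\smallskip\noindent The main obstacle is nothing deep but demands care: in every product $(I-P_N)\,A\,H_0^{\f12}(\lambda-H_0)^{-1}$, with $A$ one of the three parts of $H_0^{-\f12}B(\theta)$, one must place the Schatten norm on the factor carrying $I-P_N$ — or recognise a genuine tail over $j>N$ for the diagonal parts — and operator norms on the rest, so that $I-P_N$ actually yields decay in $N$ and all $\lambda$-dependence funnels into the single constant $C_\lambda$ via Lemma~\ref{lemma:K_schatten_bound}. The loss of a factor $N^{\f1d}$ in the quadrature term (coming from $|k_j|\lesssim N^{\f1d}$) is precisely what degrades the rate there from $N$ to $N^{1+\f1d}$; the case $s<2$ is the familiar finite-rank Schatten technicality.
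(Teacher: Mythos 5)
Your proposal is correct and follows essentially the same route as the paper: a triangle-inequality split into a finite-section (truncation) error, handled by diagonal tail sums and the ideal property with the Schatten norm placed on the factor carrying $I-P_N$, and a quadrature error controlled entrywise through the Fourier-coefficient estimate \eqref{eq:Fourier_approximation_estimate} and $\sup_{j\le N}|k_j|\lesssim N^{\f1d}$, with all $\lambda$-dependence funnelled into $C_\lambda$ via \eqref{eq:matrix_decay}. The only (minor) deviation is in the quadrature term, where you estimate the Hilbert--Schmidt norm of the finite block and pass to $\mathcal{C}_s$ (forcing the extra case $d=1$, $d<s<2$), whereas the paper bounds $\|P_N(V-V^{\mathrm{appr},n})P_N\|$ in operator norm ($\le N$ times the maximal entry error) and multiplies by $\|H_0^{\f12}(\lambda-H_0)^{-1}\|_{\mathcal{C}_s}\lesssim C_\lambda$ via the ideal property, which yields the same rate $N^{1+\f1d}n^{-(1-\f dp)}$ without any case distinction.
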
 
\begin{proof}
Again, we denote by $\|\cdot\|$ the $L^2(\R^d)$ operator norm in this proof.
We first treat the $\theta\cdot\nabla$ term. By equations \eqref{eq:sqrt(H)_matrix} - \eqref{eq:theta*nabla_matrix} we have
\begin{align*}
H_0^{-\f12}(- 2\i\theta\cdot\nabla) H_0^{\f12}(\lambda - H_0)^{-1} - P_N H_0^{-\f12}(- 2\i\theta\cdot\nabla) H_0^{\f12}(\lambda - H_0)^{-1} P_N = \diag\left( \f{2\,\theta\cdot k_j}{\lambda - 1 - |k_j|^2} ;\; j>N \right)
\end{align*}
	and by \eqref{eq:matrix_decay} we can estimate the above as
\begin{align}
	\left\|\diag\left( \f{2\,\theta\cdot k_j}{\lambda - 1 - |k_j|^2} ;\; j>N \right) \right\|_{\mathcal{C}_s}
	&\leq \f{2}{\pi} |\theta| C_\lambda \Bigg(\sum_{j=N+1}^\infty j^{-\f sd}\Bigg)^{\f1s}
	\nonumber
	\\
	&\leq \f{2}{\pi} |\theta| C_\lambda \left(\int_N^\infty t^{-\f sd}\,dt \right)^{\f1s}
	\nonumber
	\\
	&\leq \f{2}{\pi} |\theta| C_\lambda \left(\f{N^{-\f{s}{d}+1}}{\f{s}{d}-1}\right)^{\f1s}
	\nonumber
	\\
	&= \f{2 |\theta| C_\lambda }{\pi (\f{s}{d}-1)^\f{1}{s} }  N^{-\f{1}{d}+\f{1}{s}}.
	\label{eq:grad_term_estimate}
\end{align}
 To estimate the next term, we denote $W_n:=|\theta|^2 - 1 + V^{\mathrm{appr},n}$ and $W:=|\theta|^2 - 1 + V$ for brevity, and note that the diagonal operators $H_0^{-\f12}$ and $(\lambda - H_0)^{-1}$ commute with $P_N$. Thus we have $P_NH_0^{-\f12} W_n H_0^{\f12}(\lambda - H_0)^{-1}P_N = H_0^{-\f12}P_N W_n P_N H_0^{\f12}(\lambda - H_0)^{-1}$.
 Then we calculate
\begin{align}
	\bigl\| H_0^{-\f12}\big(W - P_N W_n P_N\big) H_0^{\f12}(\lambda - H_0)^{-1} \bigr\|_{\mathcal{C}_s}
	&\leq \bigl\| H_0^{-\f12}\big(W - P_N W P_N\big) H_0^{\f12}(\lambda - H_0)^{-1} \bigr\|_{\mathcal{C}_s}
	\nonumber
	\\
	& \quad + \bigl\| H_0^{-\f12} P_N\big(W - W_n\big)P_N H_0^{\f12}(\lambda - H_0)^{-1} \bigr\|_{\mathcal{C}_s}
	\nonumber
	\\
	&\leq \bigl\| H_0^{-\f12}\big(W - P_N W P_N\big) H_0^{\f12}(\lambda - H_0)^{-1} \bigr\|_{\mathcal{C}_s} 
	\nonumber
	\\
	& \quad + \bigl\| H_0^{-\f12} P_N\big(V - V^{\mathrm{appr},n}\big)P_N H_0^{\f12}(\lambda - H_0)^{-1} \bigr\|_{\mathcal{C}_s}.
	\label{eq:bounded_error_term}
\end{align}
Let us first consider the second term on the right-hand side of \eqref{eq:bounded_error_term}.
\begin{align*}
	\bigl\| H_0^{-\f12} \! P_N\big(V \! - \! V^{\mathrm{appr},n}\big)P_N H_0^{\f12}(\lambda - H_0)^{-1} \bigr\|_{\mathcal{C}_s} 
	&\!\leq \bigl\|H_0^{-\f12}\bigr\| \bigl\|P_N\big(V - V^{\mathrm{appr},n}\big)P_N\bigr\| \bigl\|H_0^{\f12}(\lambda - H_0)^{-1} \bigr\|_{\mathcal{C}_s} 
	\\
	 &\leq  \f2\pi C_\lambda \Big(1-\f{d}{s}\Big)^{-\f1s} \bigl\|P_N\big(V - V^{\mathrm{appr},n}\big)P_N\bigr\|
	\\
	&\leq  \f2\pi C_\lambda \Big(1-\f{d}{s}\Big)^{-\f1s} \! N \f{2n^{\f dp - 1}}{1-\f dp}\big(1+\sup_{j\leq N}|k_j|\big)\| V\|_{W^{1,p}((0,1)^d)}
	\\
	&\leq  \f2\pi C_\lambda \Big(1-\f{d}{s}\Big)^{-\f1s} \! N \f{2n^{\f dp - 1}}{1-\f dp}\big(1+\pi d^{\f12} N^{\f{1}{d}}\big)\| V\|_{W^{1,p}((0,1)^d)}
\end{align*}
		 where the third line follows from \eqref{eq:Fourier_approximation_estimate}, with a similar calculation as in \eqref{eq:resolvent_decay}. To simplify notation, we collect all constants independent of $\lambda,n,N$ into one and write 
\begin{align}\label{eq:V_term_bound}
	\bigl\| H_0^{-\f12} P_N\big(V \! - \! V^{\mathrm{appr},n}\big)P_N H_0^{\f12}(\lambda - H_0)^{-1} \bigr\|_{\mathcal{C}_s}
	\leq  C_{\lambda}C_{s,p,d}^2 \f{N^{1+\nf{1}{d}}}{n^{1-\nf dp}} \| V\|_{W^{1,p}((0,1)^d)}
\end{align}
Next we turn to the first term on the right-hand side of \eqref{eq:bounded_error_term}. We add and subtract $P_N W$ and use the triangle inequality to obtain
\begin{align*}
	\bigl\| H_0^{-\f12}\big(W - P_N W P_N\big) H_0^{\f12}(\lambda - H_0)^{-1} \bigr\|_{\mathcal{C}_s} 
	&\leq \bigl\| H_0^{-\f12}\big(W - P_N W\big) H_0^{\f12}(\lambda - H_0)^{-1} \bigr\|_{\mathcal{C}_s} 
	\\
	&\quad + \bigl\| H_0^{-\f12}\big(P_N W - P_N W P_N\big) H_0^{\f12}(\lambda - H_0)^{-1} \bigr\|_{\mathcal{C}_s} 
	\\
	&= \bigl\| H_0^{-\f12}(I-P_N)W H_0^{\f12}(\lambda - H_0)^{-1} \bigr\|_{\mathcal{C}_s} 
	\\
	&\quad + \bigl\| H_0^{-\f12} P_N W(I - P_N) H_0^{\f12}(\lambda - H_0)^{-1} \bigr\|_{\mathcal{C}_s} 
	\\
	&\leq \bigl\|H_0^{-\f12}(I-P_N)\bigr\|_{\mathcal{C}_s}\bigl\|H_0^{\f12}(\lambda - H_0)^{-1} \bigr\| \|W\| 
	\\
	&\quad + \bigl\| H_0^{-\f12}\bigr\|  \bigl\|(I- P_N) H_0^{\f12}(\lambda - H_0)^{-1} \bigr\|_{\mathcal{C}_s} \|W\|.
\end{align*}
Next we note that, by \eqref{eq:sqrt(H)_matrix} - \eqref{eq:theta*nabla_matrix}, $\|H_0^{-\f12}\|= 1$ and $\|H_0^{\f12}(\lambda - H_0)^{-1}\|\leq 2 C_\lambda$. Therefore
\begin{align}
	\bigl\| H_0^{-\f12}\big(W - P_N W P_N\big) H_0^{\f12}(\lambda - H_0)^{-1} \bigr\|_{\mathcal{C}_s} 
	&\leq 
	\Big( 2 C_\lambda \bigl\|H_0^{-\f12}(I-P_N)\bigr\|_{\mathcal{C}_s} 
	\nonumber
	 \\
	 &\hspace{2cm} +  \bigl\|(I - P_N) H_0^{\f12}(\lambda - H_0)^{-1} \bigr\|_{\mathcal{C}_s} \Big) \|W\|.
	 \label{eq:finite_section_error}
\end{align}
Finally, we employ \eqref{eq:sqrt(H)_matrix} - \eqref{eq:theta*nabla_matrix} again to estimate the finite section error in \eqref{eq:finite_section_error}. A straightforward calculation shows that
\begin{align*}
	\bigl\|H_0^{-\f12}(I-P_N)\bigr\|_{\mathcal{C}_s} &\leq \pi^{-1} \f{N^{\f1s-\f1d}}{(\f sd - 1)^{\nf1s}},
	\\
	\bigl\|(I - P_N) H_0^{\f12}(\lambda - H_0)^{-1} \bigr\|_{\mathcal{C}_s} &\leq \f2\pi C_\lambda \f{N^{\f1s-\f1d}}{(\f sd - 1)^{\nf1s}}.
\end{align*}
Using these bounds in \eqref{eq:finite_section_error}, we finally obtain the error estimate
\begin{align}\label{eq:W_term_bound}
	\bigl\| H_0^{-\f12}\big(W - P_N W P_N\big) H_0^{\f12}(\lambda - H_0)^{-1} \bigr\|_{\mathcal{C}_s} 
	&\leq
	\f{4 C_\lambda N^{\f1s-\f1d}}{\pi(\f sd - 1)^{\nf1s}} \|W\|
	\\
	&\leq
	\f{4 C_\lambda N^{\f1s-\f1d}}{\pi(\f sd - 1)^{\nf1s}} \Big(\big||\theta|^2-1 \big| + \|V\|_{L^\infty((0,1)^d)} \Big).\notag
\end{align}
Combining \eqref{eq:grad_term_estimate}, \eqref{eq:V_term_bound} and \eqref{eq:W_term_bound} yields the assertion with constants
\begin{align*}
	C_{s,d}^1 &= \f{2 }{\pi (\f{s}{d}-1)^\f{1}{s} },
	\\
	C_{s,p,d}^2 &= \f2\pi \left(1-\f{d}{s}\right)^{-\f1s}  \f{2}{1-\f dp}\Big(1+\pi d^{\nf12} \Big),
	\\
	C_{s,d}^3 &= \f{4}{\pi(\f sd - 1)^{\nf1s}}.
\end{align*}
\end{proof}
\subsection{A Provisional Algorithm}\label{sec:provisional_algorithm}
Armed with the error estimates from the last section, we are now able to define an algorithm based on the operator $K(\lambda, \theta)$. We first recall a result on Lipschitz continuity of perturbation determinants.

\begin{theorem}[{\cite[Thm. 6.5]{Simon}}]\label{thm:Simon}
	For $m\in\N$, denote by $\det_m$ the perturbation determinant on $\mathcal C_m$ (cf. \cite[Sec. XI.9]{DS2}). Then there exists a constant $c_m$ such that
	\begin{align*}
		|\det\nolimits_m(I-A) - \det\nolimits_m(I-B)| \leq e^{c_m(1+\|A\|_{\mathcal C_m}+\|B\|_{\mathcal C_m})} \|A-B\|_{\mathcal C_m}
	\end{align*}
	for all $A,B\in \mathcal C_m$.
	Moreover, one has $c_m\leq e(2+\log(m))$.
\end{theorem}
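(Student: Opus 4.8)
We sketch the classical argument. For $A\in\mathcal C_m$ one has $\det\nolimits_m(I-A)=\det\big(I-R_m(A)\big)$, the Fredholm determinant of the trace-class operator $R_m(A):=I-(I-A)\exp\!\big(\sum_{j=1}^{m-1}A^j/j\big)$; in particular $A\mapsto\det\nolimits_m(I-A)$ is entire on $\mathcal C_m$, and it vanishes precisely when $I-A$ fails to be invertible. The plan is to interpolate linearly between the two arguments. Put $A_t:=(1-t)B+tA$ and $C:=A-B$, so that $\|A_t\|_{\mathcal C_m}\le\|A\|_{\mathcal C_m}+\|B\|_{\mathcal C_m}$ for all $t\in[0,1]$, and write
\[
	\det\nolimits_m(I-A)-\det\nolimits_m(I-B)=\int_0^1\f{d}{dt}\det\nolimits_m(I-A_t)\,dt ;
\]
it then suffices to bound the integrand by $e^{c_m(1+\|A\|_{\mathcal C_m}+\|B\|_{\mathcal C_m})}\,\|C\|_{\mathcal C_m}$ uniformly in $t$.

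For the derivative I would establish a regularised Jacobi formula: setting $\Phi(X):=\det\nolimits_m(I-X)\,(I-X)^{-1}X^{m-1}$, one has $\f{d}{dt}\det\nolimits_m(I-A_t)=-\operatorname{tr}\!\big(\Phi(A_t)\,C\big)$. For finite-rank $A$ and $B$ this is Jacobi's formula $\f{d}{dt}\det M_t=\operatorname{tr}\!\big(\operatorname{adj}(M_t)\,\dot M_t\big)$ combined with the chain rule, and it holds for every $t$ because the adjugate is defined also for singular matrices; the general case $A,B\in\mathcal C_m$ then follows by density, all the operators involved depending continuously on $X$ in the $\mathcal C_m$-topology. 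In particular $\Phi$, in spite of the factor $(I-X)^{-1}$, is an \emph{entire} $\mathcal C_{m/(m-1)}$-valued function of $X\in\mathcal C_m$: the would-be poles of $(I-X)^{-1}$ are cancelled, in the product $\det\nolimits_m(I-X)(I-X)^{-1}$, by the zeros of the Fredholm determinant (a standard fact, see \cite[Sec.~XI.9]{DS2}), and $X^{m-1}\in\mathcal C_{m/(m-1)}$. Since moreover $C\in\mathcal C_m$ and $\tfrac{m-1}{m}+\tfrac1m=1$, the Schatten--H\"older inequality gives
\begin{align*}
	\Big|\f{d}{dt}\det\nolimits_m(I-A_t)\Big|
	&\le\|\Phi(A_t)\|_{\mathcal C_{m/(m-1)}}\,\|C\|_{\mathcal C_m}\\
	&\le\big\|\det\nolimits_m(I-A_t)(I-A_t)^{-1}\big\|\,\|A_t\|_{\mathcal C_m}^{m-1}\,\|C\|_{\mathcal C_m}.
\end{align*}

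It therefore remains to prove, uniformly in $X\in\mathcal C_m$, the a priori estimate
\[
	\big\|\det\nolimits_m(I-X)\,(I-X)^{-1}\big\|\le e^{c_m(1+\|X\|_{\mathcal C_m})},\qquad c_m\le e(2+\log m) .
\]
Indeed, feeding this into the previous display --- the polynomial factor $\|A_t\|_{\mathcal C_m}^{m-1}$ being absorbed into the exponential, and $\|A_t\|_{\mathcal C_m}\le\|A\|_{\mathcal C_m}+\|B\|_{\mathcal C_m}$ --- and integrating over $t\in[0,1]$ yields the theorem. I expect this a priori estimate to be the main obstacle. Its proof reduces, by density of the finite-rank operators and Weyl's inequality (which controls $\sum_j|\mu_j|^m$ by $\|X\|_{\mathcal C_m}^m$, where $\{\mu_j\}$ are the eigenvalues of $X$), to a scalar inequality for the $\mu_j$: one isolates the finitely many ``large'' eigenvalues, those with $|\mu_j|$ bounded below, which are controlled crudely, from the ``small'' ones, for which the elementary bound $\big|(1-\mu)\exp\!\big(\sum_{j<m}\mu^j/j\big)\big|\le\exp\!\big(c\,|\mu|^m\big)$ is available, and then reassembles the product, absorbing also the $(I-X)^{-1}$ factor. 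The structural part of the argument --- interpolate, differentiate, apply H\"older --- is routine; what is delicate is precisely this eigenvalue bookkeeping and the optimisation of the constant that produces the clean value $c_m\le e(2+\log m)$. This is carried out in \cite[Sec.~6]{Simon}, to which we refer for the details.
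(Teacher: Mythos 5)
First, a point of reference: the paper contains no proof of this statement to compare against --- it is imported verbatim (as Theorem 6.5) from Simon's book and simply cited, and your sketch, which at the end also defers the ``eigenvalue bookkeeping'' to Simon, is therefore a reconstruction rather than an alternative to anything proved here. The skeleton you propose --- interpolate along $A_t=(1-t)B+tA$, differentiate via the regularized Jacobi formula $\frac{d}{dt}\det\nolimits_m(I-A_t)=-\det\nolimits_m(I-A_t)\operatorname{tr}\bigl[(I-A_t)^{-1}A_t^{m-1}\dot A_t\bigr]$ (with the determinant--resolvent product understood through its entire extension), then Schatten--H\"older with exponents $\frac{m}{m-1}$ and $m$ --- is sound and is one standard route. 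For the record, Simon's own argument is different and avoids these operator-derivative subtleties entirely: he treats $z\mapsto\det\nolimits_m\bigl(I-(1-z)B-zA\bigr)$ as an entire function of one complex variable, bounds it by $\exp\bigl(\Gamma_m\|\cdot\|_{\mathcal C_m}^m\bigr)$, and extracts the Lipschitz estimate from a Cauchy-integral estimate on a disc of radius comparable to $(1+\|A\|_{\mathcal C_m}+\|B\|_{\mathcal C_m})/\|A-B\|_{\mathcal C_m}$.

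The genuine gap is the a priori estimate to which you reduce everything: $\bigl\|\det\nolimits_m(I-X)(I-X)^{-1}\bigr\|\le e^{c_m(1+\|X\|_{\mathcal C_m})}$ is false for $m\ge 2$, and so, in fact, is the literal inequality of the statement as transcribed. Regularized determinants genuinely grow like $\exp\bigl(c\,\|X\|_{\mathcal C_m}^m\bigr)$, and this growth is attained: for $m=2$ take $X_N$ diagonal with $N$ eigenvalues $\i R N^{-1/2}$, so that $\|X_N\|_{\mathcal C_2}=R$ while $|\det\nolimits_2(I-X_N)|=(1+R^2/N)^{N/2}\to e^{R^2/2}$ as $N\to\infty$; for large $R$ this exceeds $1+R\,e^{c_2(1+R)}$, contradicting the claimed bound with $B=0$. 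The resolution is that Simon's Theorem 6.5 carries the $m$-th power inside the exponential, i.e. $\exp\bigl[c_m(1+\|A\|_{\mathcal C_m}+\|B\|_{\mathcal C_m})^m\bigr]$, and correspondingly the usable a priori bound is $\bigl\|\det\nolimits_m(I-X)(I-X)^{-1}\bigr\|\le\exp\bigl[c_m(1+\|X\|_{\mathcal C_m})^m\bigr]$. With that power restored your outline does go through (the derivative formula at points where $I-A_t$ is singular being handled, as you indicate, via the entire extension or finite-rank density), and the statement as printed should be amended in the same way --- a harmless correction for its use in this paper, where $m=\lceil p\rceil$ is fixed and the relevant Schatten norms are uniformly bounded, so only a locally uniform Lipschitz constant is ever needed.
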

Next, we define 
\begin{de}[Provisional Algorithm]\label{def:provisional_algorithm}
	Let $p>d$, $V\in\Om_p^{\mathrm{Sch}}$ and $z_0\in\C$. For $N\in\N$ let $\Theta_N = \big(\theta_1^{(N)},\dots,\theta_N^{(N)}\big)$ be a linearly spaced   lattice in $[0,2\pi]^d$ and let $L_N:=\f1N(\Z+\i\Z) \cap Q_{z_0}$, where $Q_{z_0}:=\left\{z\in\C \,\middle|\, |\im(z-z_0)|,|\re(z-z_0)|\leq \f12 \right\}$. Then we let $n(N):=N^{\lceil\alpha\rceil}$, with $\alpha=\f{1+2d^{-1}-p^{-1}}{1-p^{-1}d}$,%
	\footnote{
		The exponent $\alpha$ is chosen such that $N^{1+\nf{1}{d}}/n^{1-\nf dp}\leq N^{\nf1p - \nf1d}$ (cf. Lemma \eqref{lemma:main_error_bound}).
	}
	and define
	\begin{align}\label{eq:periodic_potential_alg}
		\Gamma_N^{z_0}(V) := \bigcup_{i=1}^N \left\{ z\in L_N\,\middle|\, \big|\det\nolimits_{\lceil p\rceil}\big(I-P_NK^{\mathrm{appr}}_{n(N)}\big(z,\theta_i^{(N)}\big)P_N\big)\big|\leq N^{-(\f{1}{2d} - \f{1}{2p})} \right\}.
	\end{align}
\end{de}
Note that for every $N\in\N$, $\Gamma_N^{z_0}(V)$ can be computed from the information in $\Lambda$ using finitely many algebraic operations (recall in particular the approximated Fourier coefficients \eqref{eq:approximated_fourier}). Therefore, every $\Gamma_N^{z_0}$ defines an arithmetic algorithm in the sense of Definition \ref{def:Algorithm}.
\begin{prop}[Convergence of Provisional Algorithm]\label{prop:conv_1}
	Let $z_0\in\C$ and let $V\in\Om_{p}^{\mathrm{Sch}}$. The following statements hold.
	\begin{enumi}
		\item For any sequence $z_N\in\Gamma^{z_0}_N(V)$ with $z_N\to z\in Q_{z_0}\setminus\bigcup_j(1+|k_j|^2)$ one has $z\in\sigma(-\Delta+V)$.
		\item For any $z\in\left(\sigma(-\Delta+V)\cap Q_{z_0}\right)\setminus\bigcup_j(1+|k_j|^2)$ there exists a sequence $z_N\in\Gamma^{z_0}_N(V)$ with $z_N\to z$ as $N\to+\infty$.
		\item Let $V\in\Om_{p,M}^{\mathrm{Sch}}$.
			For any given $\eps,\delta > 0$ one has $\left(\sigma(-\Delta+V)\cap Q_{z_0}\right)\setminus B_\delta\big(\bigcup_j(1+|k_j|^2)\big) \subset B_\eps(\Gamma_N^{z_0}(V))$ as soon as $N>\max \{ \eps^{-1},\, N_{\delta,z_0} \}$, where 
			\begin{equation}\label{eq:N_{delta,z_0}_definition}
				N_{\delta,z_0} = \Big[ C_{\delta,z_0}^\textnormal{Lip} + G(|z_0|+1)\delta^{-1} \Big]^{\f{2}{\nf{1}{p}+\nf{1}{d}}}
			\end{equation}
			and $C_{\delta,z_0}^\textnormal{Lip}$, $G$ are explicit constants, which can be computed in finitely many operations from $z_0,\delta,s,p,d$ and the \emph{a priori} bound $M$ for $\|V\|_{W^{1,p}}$.
	\end{enumi}
\end{prop}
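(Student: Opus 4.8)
The plan is to read the three statements off the Birman--Schwinger equivalence of Subsection \ref{sec:Birman-Schwinger}, the error bounds of Subsection \ref{sec:potential-estimates}, and the Lipschitz continuity of the regularized determinant (Theorem \ref{thm:Simon}). Throughout I would work in the Schatten class $\mathcal{C}_{\lceil p\rceil}$ --- note $\lceil p\rceil>d$, so by Lemma \ref{lemma:K_schatten_bound} the operator $K(\lambda,\theta)$ and (via Lemma \ref{lemma:main_error_bound}) all its finite-section approximations lie in $\mathcal{C}_{\lceil p\rceil}$ --- and use that, for $\lambda\notin\sigma(H_0)=\bigcup_j\{1+|k_j|^2\}$,
\begin{align*}
\lambda\in\sigma(-\Delta+V) \quad\Longleftrightarrow\quad \exists\,\theta\in[0,2\pi]^d:\ \det\nolimits_{\lceil p\rceil}\bigl(I-K(\lambda,\theta)\bigr)=0,
\end{align*}
which follows by combining $\sigma(H)=\bigcup_\theta\sigma(H(\theta))$ (Lemma \ref{lemma:Floquet}(vi)), the Birman--Schwinger principle, and the standard fact that a regularized determinant $\det_m(I-A)$ vanishes precisely when $I-A$ fails to be invertible.

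For part (i): given $z_N\in\Gamma_N^{z_0}(V)$ with $z_N\to z\in Q_{z_0}\setminus\bigcup_j(1+|k_j|^2)$, each $z_N$ comes with some $\theta^{(N)}\in\Theta_N$ realizing $\bigl|\det_{\lceil p\rceil}(I-P_NK^{\mathrm{appr}}_{n(N)}(z_N,\theta^{(N)})P_N)\bigr|\le N^{-(\nf1{2d}-\nf1{2p})}$. Since $[0,2\pi]^d$ is compact I would pass to a subsequence along which $\theta^{(N)}\to\theta$. Along it, Lemma \ref{lemma:main_error_bound} with $s=\lceil p\rceil$ (its right-hand side tends to $0$ because $N\to\infty$, $n(N)=N^{\lceil\alpha\rceil}\to\infty$, and $C_{z_N}$ stays bounded as $z$ lies at positive distance from $\sigma(H_0)$) together with the Lipschitz bound of Lemma \ref{lemma:K_Lipschitz_bound} gives $P_NK^{\mathrm{appr}}_{n(N)}(z_N,\theta^{(N)})P_N\to K(z,\theta)$ in $\mathcal{C}_{\lceil p\rceil}$. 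Theorem \ref{thm:Simon} then forces $\det_{\lceil p\rceil}(I-K(z,\theta))=\lim_N\det_{\lceil p\rceil}(I-P_NK^{\mathrm{appr}}_{n(N)}(z_N,\theta^{(N)})P_N)=0$, so $z\in\sigma(H(\theta))\subset\sigma(-\Delta+V)$.

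For part (ii): pick $\theta\in[0,2\pi]^d$ with $z\in\sigma(H(\theta))$, so $\det_{\lceil p\rceil}(I-K(z,\theta))=0$. Choose the nearest grid points $z_N\in L_N$ (distance $O(N^{-1})$) and $\theta^{(N)}\in\Theta_N$ (distance $O(N^{-\nf1d})$); a minor point here is a boundary-of-$Q_{z_0}$ check, plus the observation that lattice points never meet the (irrational) values in $\sigma(H_0)$, so $K^{\mathrm{appr}}_{n(N)}(z_N,\cdot)$ is well defined. Estimating
\begin{align*}
\bigl\|K(z,\theta)-P_NK^{\mathrm{appr}}_{n(N)}(z_N,\theta^{(N)})P_N\bigr\|_{\mathcal{C}_{\lceil p\rceil}}
&\le \bigl\|K(z,\theta)-K(z_N,\theta^{(N)})\bigr\|_{\mathcal{C}_{\lceil p\rceil}} \\
&\quad + \bigl\|K(z_N,\theta^{(N)})-P_NK^{\mathrm{appr}}_{n(N)}(z_N,\theta^{(N)})P_N\bigr\|_{\mathcal{C}_{\lceil p\rceil}},
\end{align*}
the first summand is controlled by Corollary \ref{cor:K_Lipschitz_crude} and the second by Lemma \ref{lemma:main_error_bound}, where the choice of $n(N)$ via $\alpha$ makes the potential-approximation contribution $\lesssim N^{\nf1p-\nf1d}$. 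Feeding this into Theorem \ref{thm:Simon} and using $\det_{\lceil p\rceil}(I-K(z,\theta))=0$ yields $\bigl|\det_{\lceil p\rceil}(I-P_NK^{\mathrm{appr}}_{n(N)}(z_N,\theta^{(N)})P_N)\bigr|\le N^{-(\nf1{2d}-\nf1{2p})}$ for all large $N$, i.e.\ $z_N\in\Gamma_N^{z_0}(V)$, and $z_N\to z$.

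Part (iii) is the quantitative version of (ii): for $V\in\Om^{\mathrm{Sch}}_{p,M}$ all constants above become \emph{a priori} bounds ($\|V\|_{W^{1,p}}\le M$, $\|V\|_\infty\le\tfrac{3p-d}{p-d}M$, and for $z\in Q_{z_0}$ at distance $\ge\delta$ from $\sigma(H_0)$ one has $C_z\le(|z_0|+1)\delta^{-1}$), so the implicit ``$N$ large'' of (ii) becomes explicit: $N>\eps^{-1}$ guarantees $|z_N-z|<\eps$, and $N>N_{\delta,z_0}$, with $N_{\delta,z_0}$ of the asserted form, guarantees the determinant bound --- the precise exponent and the dependence on $\delta,z_0,M$ being read off by propagating the constants of Corollary \ref{cor:K_Lipschitz_crude}, Lemma \ref{lemma:main_error_bound} and the exponential prefactor $e^{c_{\lceil p\rceil}(1+\|A\|+\|B\|)}$ of Theorem \ref{thm:Simon}. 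The main obstacle is exactly this bookkeeping: one must ensure that each constant entering $N_{\delta,z_0}$ --- especially the Simon prefactor, which depends on the Schatten norms of both $K(z,\theta)$ and its finite-section surrogate --- is genuinely computable from $z_0,\delta,s,p,d,M$ alone, and that, once multiplied by these constants, every negative power of $N$ from the three error sources (finite section $\sim N^{\nf1s-\nf1d}$, potential approximation $\sim N^{\nf1p-\nf1d}$, grid refinement $\sim N^{-\nf1d}$ and $N^{-1}$) dominates the threshold $N^{-(\nf1{2d}-\nf1{2p})}$ for $N>N_{\delta,z_0}$.
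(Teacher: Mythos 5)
Your proposal is correct and follows essentially the same route as the paper's proof: Floquet--Bloch plus Birman--Schwinger to reduce to zeros of $\det_{\lceil p\rceil}(I-K(z,\theta))$, a compactness/subsequence argument in $\theta$ combined with Lemma \ref{lemma:main_error_bound} and Theorem \ref{thm:Simon} for (i), nearest grid points plus the Lipschitz bounds for (ii), and propagation of the explicit constants ($C_z\leq(|z_0|+1)\delta^{-1}$, the Sobolev bound on $\|V\|_\infty$, the Simon prefactor) for (iii), exactly as in the paper. The only deviations are cosmetic --- you take $s=\lceil p\rceil$ rather than $s=p$ in the error bound, invoke Corollary \ref{cor:K_Lipschitz_crude} already in (ii), and leave the part-(iii) bookkeeping at sketch level (which mirrors what the paper actually does); your parenthetical claim that lattice points never meet $\sigma(H_0)$ is not quite accurate (the value $1$ is rational and can lie in $L_N$), but it is immaterial since the relevant $z_N$ converge to a point at positive distance from $\sigma(H_0)$.
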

\begin{proof}
(i) Let $z_N\in\Gamma^{z_0}_N(V)$ and assume that $z_N\to z$ for some $z\in\C$. We need to show that $z\in\sigma(-\Delta+V)$.
	Since $z_N\in\Gamma_N^{z_0}(V)$, we have $\det\nolimits_{\lceil p\rceil}\big(I-P_NK^{\mathrm{appr}}_{n(N)}(z,\theta_{i_N})P_N\big)\to 0$ for some sequence $\{\theta_{i_N}\}_{N\in\N}\subset[0,2\pi]^d$. Then there exists a convergent subsequence (again denoted by $\theta_{i_{N}}$) converging to some some $\theta\in[0,2\pi]^d$. We first note that due to Theorem \ref{thm:Simon}  we have the $N$-independent determinant error bound
	\begin{align*}
		\Big|\det\nolimits_{\lceil p\rceil}\big(I-K & (z_{N},\theta_{i_{N}})\big)
		 -
		 \det\nolimits_{\lceil p\rceil}  \big(I-P_{N}K^{\mathrm{appr}}_{n(N)}(z_{N},\theta_{i_{N}})P_{N}\big)\Big|
		 \leq 
		 \\
		 &\big\|K(z_{N}, \theta_{i_{N}}) - P_{N}K^{\mathrm{appr}}_{n(N)}(z_{N}, \theta_{i_{N}})P_{N} \big\|_{\mathcal{C}_{\lceil p\rceil}}
		 e^{
		 	c_{\lceil p\rceil}(
			 	1
			 	+\|K(z_{N}, \theta_{i_{N}})\|_{\mathcal{C}_{\lceil p\rceil}} 
			 	+ \|K^{\mathrm{appr}}_{n(N)}(z_{N}, \theta_{i_{N}})\|_{\mathcal{C}_{\lceil p\rceil}}
		 	)
		 	},
	\end{align*}
 (note that $\det_{\lceil p\rceil}(I-K(z_{N},\theta_{i_{N}}))$ is well-defined by Lemma \ref{lemma:K_schatten_bound} because $p>d$). We note that the exponential factor in the last line is uniformly bounded in $N$ by some explicit constant $c_{\textnormal{exp}}$ (cf. Lemmas \ref{lemma:K_schatten_bound} and \ref{lemma:main_error_bound}). 
 Using Lemma \ref{lemma:main_error_bound} with our choice $n(N)=N^{\lceil\alpha\rceil}$ and $s=p$, we obtain
	\begin{align}
		\Big| \det\nolimits_{\lceil p\rceil} \big(I & - K(z_{N},\theta_{i_{N}})\big)
		 -
		  \det\nolimits_{\lceil p\rceil} \big(I-P_{N} K^{\mathrm{appr}}_{n(N)}(z_{N},\theta_{i_{N}})P_{N}\big)\Big|
		 \leq 
		 \nonumber
		 \\
		 &
		 c_{\textnormal{exp}}\,C_{z_N}\bigg(
		C_{p,d}^1 |\theta_{i_N}| N^{\f{1}{p}-\f{1}{d}}
		+
		C_{p,p,d}^2 N^{\f1p-\f1d} \| V\|_{W^{1,p}}
		+
		C_{p,d}^3 N^{\f1p-\f1d} \Big(\big||\theta_{i_N}|^2-1 \big| + \|V\|_{L^\infty} \Big)
	\bigg),
	\label{eq:used_main_estimate}
	\end{align}
	Note that each term on the right hand side has a negative power of $N$ and therefore tends to 0 as $N\to+\infty$. Note further that $C_{z_N}=\sup_{j\in\N}\big| 1-\f{z_N-1}{|k_j|^2} \big|^{-1}$ remains bounded as $N\to+\infty$ by our assumption that $z\notin\bigcup_j(1+|k_j|^2)$. Therefore we have
	\begin{align*}
		\Big| \det\nolimits_{\lceil p\rceil} & \big(I-K(z_{N},\theta_{i_{N}})\big)
		 -
		  \det\nolimits_{\lceil p\rceil} \big(I-P_{N} K^{\mathrm{appr}}_{n(N)}(z_{N},\theta_{i_{N}})P_{N}\big)\Big|
		\to 0 \qquad\text{ as }N\to+\infty.
	\end{align*}
	To conclude the proof of (i), if we note that continuity in $(z,\theta)$ and periodicity in $\theta$ imply 
	\begin{align*}
		\left|\det\nolimits_{\lceil p\rceil}\big(I-K(z,\theta)\big)\right| 
		&= \lim_{N\to+\infty} \left|\det\nolimits_{\lceil p\rceil}\big(I-K(z_{N},\theta_{i_{N}})\big)\right|
		\\
		&\leq \lim_{N\to+\infty}
		 \left|\det\nolimits_{\lceil p\rceil}\big(I-P_{N}K^{\mathrm{appr}}_{n(N)}(z_{N},\theta_{i_{N}})P_{N}\big)\right|
		 \\
		 &\qquad +\lim_{N\to+\infty} \left|\det\nolimits_{\lceil p\rceil}\big(I-K(z_{N},\theta_{i_{N}})\big)
		 -
		 \det\nolimits_{\lceil p\rceil}\big(I-P_{N}K^{\mathrm{appr}}_{n(N)}(z_{N},\theta_{i_{N}})P_{N}\big)\right|
		\\
		 &= 0.
	\end{align*}
	Hence we have $1\in\sigma(K(z,\theta))$ and thus $z\in\sigma(-\Delta+V)$.

(ii) Conversely, denote $H:=-\Delta+V$ and let $z\in\sigma(H)\cap Q_{z_0}\setminus\bigcup_j(1+|k_j|^2)$. We need to  show that there exists a sequence $z_N\in\Gamma_N^{z_0}(V)$ such that $z_N\to z$. In fact, let $z_N\in L_N$ be any sequence with $|z-z_N|<\f1N$ (such a sequence exists by the definition of $L_N$). Since $z\in\sigma(H)$ there exists $\theta\in[0,2\pi]^d$ with $z\in\sigma(H(\theta))$, cf. Lemma \ref{lemma:Floquet}. Consequently
\begin{align*}
	\det\nolimits_{\lceil p\rceil}\big(I-K(z,\theta)\big) = 0.
\end{align*}
Consider some sequence $\theta_N\in\Theta_N$ with $|\theta_N-\theta|<\f1N$, the existence of which is guaranteed by the definition of $\Theta_N$. 
By Lemma \ref{lemma:K_Lipschitz_bound} and Theorem \ref{thm:Simon} there exists $C>0$ with
\begin{equation}
	\left|\det\nolimits_{\lceil p\rceil}\big(I-K(z_N,\theta_N)\big)\right| \leq C(|z-z_N| + |\theta-\theta_N|)
	\leq \f{2C}{N}.
	\label{eq:analytic_local_bound}
\end{equation}
Thus, using \eqref{eq:analytic_local_bound} and the main error estimate \eqref{eq:used_main_estimate}, we obtain
\begin{align*}
	\Big|\det\nolimits_{\lceil p\rceil} \! \big(I-P_{N} K^{\mathrm{appr}}_{n(N)}(z_{N},\theta_{N})P_{N}\big)\Big|
	 &\leq \left|\det\nolimits_{\lceil p\rceil} \! \big(I-K(z_N,\theta_N)\big)\right| 
	\\
	 &\quad + \left| \det\nolimits_{\lceil p\rceil} \! \big(I-P_{N} K^{\mathrm{appr}}_{n(N)}(z_{N},\theta_{N})P_{N}\big) - \det\nolimits_{\lceil p\rceil}\big(I-K(z_N,\theta_N)\big)\right|
	\\
	 &\leq \f{2C}{N} 
	+ c_{\textnormal{exp}}\,C_{z_N}N^{\f{1}{p}-\f{1}{d}}\bigg(
		C_{p,d}^1 |\theta_{N}| 
		+
		C_{p,d}^2  \| V\|_{W^{1,p}((0,1)^d)}
	\\
		&\hspace{4cm} + C_{p,d}^3 \Big(\big||\theta_{N}|^2-1 \big| + \|V\|_{L^\infty((0,1)^d)} \Big)
	\bigg).
\end{align*}
 To keep the notation simple, we collect all constants independent of $N$ into a single constant $G=G(p,d,V)$ (note that $|\theta_N|\leq 2\pi\sqrt d$ for all $N$). This gives
\begin{align}
	\Big|\det\nolimits_{\lceil p\rceil} \big(I-P_{N} K^{\mathrm{appr}}_{n(N)}(z_{N},\theta_{N})P_{N}\big)\Big|
	 &\leq  \f{2C}{N} 
	+ c_{\textnormal{exp}}\,G\,C_{z_N} N^{\f{1}{p}-\f{1}{d}} 
	\nonumber
	\\
	&\leq  N^{-(\f{1}{2d} - \f{1}{2p})},
	\label{eq:final_scaling}
\end{align}
 where the last line holds for $N$ large enough (note that $\f{1}{2d} - \f{1}{2p}>0$ by our assumption $p>d$ and that $C_{z_N}$ remains bounded as $N\to+\infty$ by our assumptions on $z$). Comparing \eqref{eq:final_scaling} to \eqref{eq:periodic_potential_alg}, we see that $z_N\in\Gamma_N^{z_0}(V)$ for $N$ large enough.

 (iii) The proof is similar to that of (ii). Let $z\in \left(\sigma(-\Delta+V)\cap Q_{z_0}\right)\setminus B_\delta\big(\bigcup_j(1+|k_j|^2)\big)$ be an arbitrary point and let $z_N\in L_N$ be any sequence with $|z-z_N|<\f1N$. Since $z\in\sigma(H)$ there exists $\theta\in[0,2\pi]^d$ with $z\in\sigma(H(\theta))$, cf. Lemma \ref{lemma:Floquet}. Consequently
\begin{align*}
	\det\nolimits_{\lceil p\rceil}\big(I-K(z,\theta)\big) = 0.
\end{align*}
Consider some sequence $\theta_N\in\Theta_N$ with $|\theta_N-\theta|<\f1N$, the existence of which is guaranteed by the definition of $\Theta_N$. 
By Corollary \ref{cor:K_Lipschitz_crude} and Theorem \ref{thm:Simon} this implies
\begin{align*}
	\left|\det\nolimits_{\lceil p\rceil}\big(I-K(z_N,\theta_N)\big)\right| 
	&\leq C_{\delta,z_0}^\textnormal{Lip}\big(1+\|V\|_{W^{1,p}((0,1)^d)}\big) \big(|\theta-\theta_N| + |z-z_N| \big)
	\\
	&\leq 2N^{-1}C_{\delta,z_0}^\textnormal{Lip}\big(1+\|V\|_{W^{1,p}((0,1)^d)}\big) 	\label{eq:quantitative_analytic_local_bound}
\end{align*}
where $C_{\delta,z_0}^\textnormal{Lip} = c_{\textnormal{exp}}\delta^{-2}48(|z_0|+1)^2\f{sd}{s-d}\f{3p-d}{p-d}$. Turning to the finite approximation $K^{\mathrm{appr}}_n$, this gives
\begin{align*}
	\left|\det\nolimits_{\lceil p\rceil}\!\big(I-P_{N} K^{\mathrm{appr}}_{n(N)}(z_{N},\theta_{N})P_{N}(z_N,\theta_N)\big)\right| 
	&\leq \left|\det\nolimits_{\lceil p\rceil}\big(I-K(z_N,\theta_N)\big)\right|
	\\
	& + \left|\det\nolimits_{\lceil p\rceil}\big(I-K(z_N,\theta_N)\big) - \det\nolimits_{\lceil p\rceil}\big(I-K^{\mathrm{appr}}(z_N,\theta_N)\big)\right|
	\\
	&\leq 2N^{-1}C_{\delta,z_0}^\textnormal{Lip}\big(1+\|V\|_{W^{1,p}((0,1)^d)}\big) + G C_{z_N} N^{\f{1}{p}-\f{1}{d}}
	\\
	&\leq 2N^{-1}C_{\delta,z_0}^\textnormal{Lip}\big(1+\|V\|_{W^{1,p}((0,1)^d)}\big) + G (|z_0|+1)\delta^{-1} N^{\f{1}{p}-\f{1}{d}},
\end{align*}
where $G$ denotes the explicit and computable constant introduced in step (ii) above and we have used the bound $C_{z_N}\leq (|z_0|+1)\delta^{-1}$.
The condition $z_N\in\Gamma_N(V)$ is thus implied by
\begin{align*}
	2N^{-1}C_{\delta,z_0}^\textnormal{Lip}(1+\|V\|_{W^{1,p}((0,1)^d)}) + G (|z_0|+1)\delta^{-1} N^{\f{1}{p}-\f{1}{d}}  < N^{-(\nf1{2d}-\nf1{2p})},
\end{align*} 
which immediately implies the assertion, noting that $|z-z_N|<\f1N<\eps$ as soon as $N>\eps^{-1}$.
\end{proof}
\subsection{Proof of Theorem \ref{th:periodic_potential_mainth}}
\label{sec:proof-weak-thm}
The provisional algorithm  $\Gamma_N^{z_0}$ defined in \eqref{eq:periodic_potential_alg} is not sufficient to prove Theorem \ref{th:periodic_potential_mainth}, because it does not approximate any eigenvalues lying in the set $\bigcup_j(1+|k_j|^2)$. This is ultimately due to the decomposition \eqref{eq:decomposition} into $H_0$ and $B(\theta)$. In order to solve this issue, we define a second provisional algorithm based on a different decomposition. We define
\begin{subequations} \label{eq:decomposition2}
\begin{align}
	\tilde H_0 &:= -\Delta+2,  &\dom(\tilde H_0) &= H^2_{\text{per}}\big((0,1)^d\big)
	\\
	\tilde B(\theta) &:= - 2\i\theta\cdot\nabla + |\theta|^2 - 2 + V ,  &\dom(\tilde B(\theta)) &= H^1_{\text{per}}\big((0,1)^d\big).
\end{align} 
\end{subequations}
Then, obviously, $\tilde H_0 + \tilde B(\theta)=H(\theta)$ and by the Birman-Schwinger principle,
 \begin{align*}
 	\lambda\in\C\setminus\sigma(\tilde H_0) \text{ is in }\sigma(H(\theta))
 	\quad \Leftrightarrow \quad
 	1\in\sigma\big(\tilde H_0^{-\f12}\tilde B(\theta)\tilde H_0^{\f12}(\lambda - \tilde H_0)^{-1}\big).
 \end{align*}
Moreover, we have $\sigma(\tilde H_0) = \bigcup_j(2+|k_j|^2)$ and 
\begin{lemma}\label{lemma:sigma(H)_sigma(Htilde)}
	One has
	\begin{align*}
		\sigma(H_0)\cap \sigma(\tilde H_0) = \emptyset.
	\end{align*}
\end{lemma}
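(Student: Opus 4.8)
The plan is to compute both spectra explicitly in the Fourier basis $\{e_j\}_{j\in\N}$ introduced in Subsection \ref{sec:Schatten_estimates} — in which $H_0$ and $\tilde H_0$ are diagonal — and then exhibit a purely arithmetic incompatibility between the two spectral sets.

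First I would recall that $H_0 = -\Delta+1$ and $\tilde H_0 = -\Delta+2$ are constant‑coefficient operators on the torus with domain $H^2_{\mathrm{per}}((0,1)^d)$, so they are simultaneously diagonalized by the exponentials $e_j = e^{\i k_j\cdot x}$ with $k_j\in 2\pi\Z^d$. Consequently, as already noted in the text, $\sigma(H_0) = \{\,1+|k_j|^2 : j\in\N\,\}$ and $\sigma(\tilde H_0) = \{\,2+|k_j|^2 : j\in\N\,\}$, both being pure point spectra (the eigenvalues accumulate only at $+\infty$, so these sets are closed).

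Next I would observe that $|k_j|^2\in 4\pi^2\Z_{\geq 0}$ for every $j$, since $k_j/(2\pi)\in\Z^d$. Now suppose, for contradiction, that some $\lambda$ lies in $\sigma(H_0)\cap\sigma(\tilde H_0)$, say $\lambda = 1+|k_i|^2 = 2+|k_j|^2$ for suitable $i,j\in\N$. Then $|k_i|^2-|k_j|^2 = 1$. But the left‑hand side is an integer multiple of $4\pi^2$, hence is either $0$ or has modulus at least $4\pi^2>1$; in neither case can it equal $1$. This contradiction yields $\sigma(H_0)\cap\sigma(\tilde H_0)=\emptyset$.

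I do not expect any genuine obstacle here: once the two spectra are identified, the statement reduces to a one‑line number‑theoretic remark, and the identification itself is standard Fourier analysis on the torus. The only point worth emphasizing is that the gap between the two ``base operators'' was chosen to be $1$, which is strictly smaller than the spectral gap $4\pi^2$ of $-\Delta$ on the unit torus; this is exactly why the two arithmetic progressions of eigenvalues are disjoint, and it is the reason the auxiliary constants $1$ and $2$ were inserted in the decompositions \eqref{eq:decomposition} and \eqref{eq:decomposition2}.
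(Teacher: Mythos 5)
Your proof is correct and takes essentially the same route as the paper: both identify $\sigma(H_0)=\{1+|k_j|^2\}$ and $\sigma(\tilde H_0)=\{2+|k_j|^2\}$ with $k_j\in 2\pi\Z^d$ and reduce the claim to the impossibility of $1=(2\pi)^2 m$ for an integer $m$. The only (immaterial) difference is the closing step: the paper argues that $(2\pi)^2 m$ is irrational or zero while $1$ is nonzero rational, whereas you use the size bound $4\pi^2>1$, which is marginally more elementary and makes explicit that any shift gap smaller than the spectral gap $4\pi^2$ of $-\Delta$ on the torus would work.
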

\begin{proof}
	The assertion is equivalent to the following equation having no solutions for any integers $n_i,\tilde n_i$:
	\begin{align*}
		2 + (2\pi)^2\sum_{i=1}^d\tilde n_i^2 &= 1 + (2\pi)^2\sum_{i=1}^d n_i^2,
	\end{align*}
which becomes
	\begin{align*}
1 &= (2\pi)^2\sum_{i=1}^d (n_i^2 - \tilde n_i^2).
	\end{align*}
	Clearly the right-hand side of the above equation is always irrational or zero, while the left hand side is always nonzero rational. Thus no solution exists.
\end{proof}
Subsections \ref{sec:potential-estimates} and \ref{sec:provisional_algorithm} carry over trivially to the decomposition $\tilde H_0,\,\tilde B(\theta)$. Analogously to \eqref{eq:periodic_potential_alg} we define the new algorithm
\begin{de}[Second Provisional Algorithm]
	Let $p>d$, $V\in\Om_p^{\mathrm{Sch}}$ and $z_0\in\C$. For $N\in\N$ let $\Theta_N = \big(\theta_1^{(N)},\dots,\theta_N^{(N)}\big)$ be a linearly spaced lattice in $[0,2\pi]^d$ and let $L_N:=\f1N(\Z+\i\Z) \cap Q_{z_0}$, where $Q_{z_0}:=\left\{z\in\C \,\middle|\, |\im(z-z_0)|,|\re(z-z_0)|\leq \f12 \right\}$. Then we choose $n(N):=N^{\lceil\alpha\rceil}$ (with $\alpha$ as in Definition \ref{def:provisional_algorithm}) and define
	\begin{align}\label{eq:periodic_potential_alg2}
		\tilde \Gamma_N^{z_0}(V) := \bigcup_{i=1}^N \left\{ z\in L_N\,\middle|\, \big|\det\nolimits_{\lceil p\rceil}\big(I-P_N\tilde K^{\mathrm{appr}}_{n(N)}\big(z,\theta_i^{(N)}\big)P_N\big)\big|\leq N^{-(\f{1}{2d} - \f{1}{2p})} \right\},
	\end{align}
	where now 
	\begin{align*}
	 	 \tilde K^{\mathrm{appr}}_{n}(\lambda,\theta) := \tilde H_0^{-\f12}\big(- 2\i\theta\cdot\nabla + |\theta|^2 - 2 + V^{\mathrm{appr},n}\big)\tilde H_0^{\f12}(\lambda - \tilde H_0)^{-1}.
	\end{align*}
\end{de}
From the proofs in Subsections \ref{sec:potential-estimates} and \ref{sec:provisional_algorithm} we immediately obtain the following convergence result.
\begin{prop}[Convergence of Second Provisional Algorithm]\label{prop:conv_2}
	Let $z_0\in\C$ and let $V\in\Om_{p}^{\mathrm{Sch}}$. The following statements hold.
	\begin{enumi}
		\item For any sequence $z_N\in\tilde \Gamma^{z_0}_N(V)$ with $z_N\to z\in Q_{z_0}\setminus\bigcup_j(2+|k_j|^2)$ one has $z\in\sigma(-\Delta+V)$.
		\item For any $z\in\left(\sigma(-\Delta+V)\cap Q_{z_0}\right)\setminus\bigcup_j(2+|k_j|^2)$ there exists a sequence $z_N\in\tilde \Gamma^{z_0}_N(V)$ with $z_N\to z$ as $N\to+\infty$.
		\item Let $V\in\Om_{p,M}^{\mathrm{Sch}}$. For any given $\eps,\delta>0$ one has $\left(\sigma(-\Delta+V)\cap Q_{z_0}\right)\setminus B_\delta\big(\bigcup_j(2+|k_j|^2)\big) \subset  B_\eps(\tilde\Gamma_N^{z_0}(V))$ as soon as $N>\max \{ \eps^{-1},\, N_{\delta,z_0} \}$, where $N_{\delta,z_0}$ was defined in \eqref{eq:N_{delta,z_0}_definition}.
	\end{enumi}
\end{prop}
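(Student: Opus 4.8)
The plan is to observe that none of the estimates in Subsections~\ref{sec:potential-estimates} and~\ref{sec:provisional_algorithm} actually used the specific value of the auxiliary shift constant in the splitting $H_0=-\Delta+1$, $B(\theta)=-2\i\theta\cdot\nabla+|\theta|^2-1+V$: all that entered their proofs was that $H_0$ is a positive, boundedly invertible operator which is diagonal in the Fourier basis $\{e_j\}_{j\in\N}$, that $\|H_0^{-\f12}\|\le 1$, the geometric lower bound $|k_j|\ge\pi j^{\f1d}$, and the boundedness of $|\theta|^2-1+V$. Every one of these holds verbatim for $\tilde H_0=-\Delta+2$, $\tilde B(\theta)=-2\i\theta\cdot\nabla+|\theta|^2-2+V$ (indeed $\|\tilde H_0^{-\f12}\|=2^{-\f12}<1$). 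Hence each statement of Section~\ref{sec:schrodinger} transfers to the tilde quantities, with $1+|k_j|^2$ replaced by $2+|k_j|^2$ throughout and with $C_\lambda=\sup_j\bigl|1-\f{\lambda-1}{|k_j|^2}\bigr|^{-1}$ replaced by $\tilde C_\lambda=\sup_j\bigl|1-\f{\lambda-2}{|k_j|^2}\bigr|^{-1}$, which is finite precisely when $\lambda\notin\sigma(\tilde H_0)=\bigcup_j(2+|k_j|^2)$.

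Concretely, I would first restate the Birman--Schwinger equivalence of Subsection~\ref{sec:Birman-Schwinger} for the new splitting: for $\lambda\notin\sigma(\tilde H_0)$ one has $\lambda\in\sigma(H(\theta))$ if and only if $1\in\sigma(\tilde K(\lambda,\theta))$, where $\tilde K(\lambda,\theta):=\tilde H_0^{-\f12}\tilde B(\theta)\tilde H_0^{\f12}(\lambda-\tilde H_0)^{-1}$; the algebraic identity is the same. Next I would record that Lemma~\ref{lemma:K_schatten_bound} (Schatten bound), Lemma~\ref{lemma:K_Lipschitz_bound} (Lipschitz continuity), Corollary~\ref{cor:K_Lipschitz_crude} (crude Lipschitz bound) and Lemma~\ref{lemma:main_error_bound} (main error bound) all hold with $K$, $K^{\mathrm{appr}}_n$, $C_\lambda$ replaced by $\tilde K$, $\tilde K^{\mathrm{appr}}_{n}$, $\tilde C_\lambda$ and with the same (or at most slightly larger, but still explicit) constants, because the only inputs to those proofs were the structural facts listed above together with estimate~\eqref{eq:matrix_decay}, whose analogue for $\tilde H_0$ reads $\bigl|\tfrac{2\theta\cdot k_j}{\lambda-2-|k_j|^2}\bigr|\le\f2\pi\tilde C_\lambda|\theta|j^{-\f1d}$. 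The scaling $n(N)=N^{\lceil\alpha\rceil}$ with $\alpha$ as in Definition~\ref{def:provisional_algorithm} is unchanged, since the $V^{\mathrm{appr},n}$ contribution to the error bound is insensitive to the shift constant.

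With these analogues in hand, the three parts of Proposition~\ref{prop:conv_1} transfer word for word. In part~(i), from $z_N\in\tilde\Gamma^{z_0}_N(V)$ with $z_N\to z\in Q_{z_0}\setminus\bigcup_j(2+|k_j|^2)$ one extracts a convergent subsequence of Bloch parameters $\theta_{i_N}\to\theta$, applies Theorem~\ref{thm:Simon} together with the tilde main error bound and the fact that $\tilde C_{z_N}$ stays bounded (since $z\notin\sigma(\tilde H_0)$) to get $|\det\nolimits_{\lceil p\rceil}(I-\tilde K(z_N,\theta_{i_N}))|\to0$, hence by continuity $\det\nolimits_{\lceil p\rceil}(I-\tilde K(z,\theta))=0$ and $z\in\sigma(-\Delta+V)$. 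In part~(ii), one picks $z_N\in L_N$, $\theta_N\in\Theta_N$ with $|z-z_N|,|\theta-\theta_N|<\f1N$, uses the tilde Lipschitz bound and Theorem~\ref{thm:Simon} to obtain $|\det\nolimits_{\lceil p\rceil}(I-\tilde K(z_N,\theta_N))|\le 2C/N$, and combines this with the tilde error estimate to verify $z_N\in\tilde\Gamma^{z_0}_N(V)$ for $N$ large. Part~(iii) is the quantitative version of~(ii), using the tilde form of Corollary~\ref{cor:K_Lipschitz_crude} and the bound $\tilde C_{z_N}\le(|z_0|+1)\delta^{-1}$ valid on $Q_{z_0}\setminus B_\delta(\bigcup_j(2+|k_j|^2))$, which reproduces the inclusion with the identical threshold $N_{\delta,z_0}$ of~\eqref{eq:N_{delta,z_0}_definition}.

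I do not expect a genuine obstacle: the argument is entirely a matter of bookkeeping, the only subtlety being that the exceptional set has shifted from $\sigma(H_0)$ to $\sigma(\tilde H_0)$, so that throughout one must track $\tilde C_\lambda$ rather than $C_\lambda$. This is exactly the role of Lemma~\ref{lemma:sigma(H)_sigma(Htilde)}, and it is what will allow the two provisional algorithms to be merged into one which also captures the eigenvalues lying in $\bigcup_j(1+|k_j|^2)$ that are missed by $\Gamma_N^{z_0}$.
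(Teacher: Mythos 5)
Your proposal is correct and follows essentially the same route as the paper, which proves Proposition \ref{prop:conv_2} simply by noting that Subsections \ref{sec:potential-estimates} and \ref{sec:provisional_algorithm} carry over verbatim to the decomposition $\tilde H_0,\,\tilde B(\theta)$, exactly the bookkeeping you spell out (replacing $1+|k_j|^2$ by $2+|k_j|^2$ and $C_\lambda$ by $\tilde C_\lambda$). Your version is merely a more explicit account of why that transfer is legitimate.
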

Armed with the provisional algorithms \eqref{eq:periodic_potential_alg} and \eqref{eq:periodic_potential_alg2}, we are finally able to define the main algorithm:
\begin{de}[Main Algorithm]\label{def:final_alg}
	Let $p>d$, $V\in\Om_p^{\mathrm{Sch}}$, and choose a numbering $\{Z_j\}_{j\in\N}$ of $\Z+\i\Z$ such that $|Z_i|\leq |Z_j|$ for $i\leq j$. For $N\in\N$ define
	\begin{align*}
		\Gamma_N(V) := \bigcup_{i=1}^N \Big(\Gamma_N^{Z_i}(V) \cup  \tilde \Gamma_N^{Z_i}(V) \Big).
	\end{align*}
\end{de}
Combining Propositions \ref{prop:conv_1} and \ref{prop:conv_2}, we can complete the proof of Theorem \ref{th:periodic_potential_mainth}:
\begin{proof}[Proof of Theorem \ref{th:periodic_potential_mainth}]
 Let $p>d$. For any $V\in\Om_p^{\mathrm{Sch}}$ we need to show that
	\begin{align*}
		d_{\mathrm{AW}}\big(\Gamma_N(V),\,\sigma(-\Delta+V)\big) \to 0.
	\end{align*}
	Indeed, by Propositions \ref{prop:conv_1} and \ref{prop:conv_2}, the following holds
	\begin{enuma}
		\item For any sequence $z_N\in\Gamma_N(V)$ with $z_N\to z\in \C$ one has $z\in\sigma(-\Delta+V)$.
		\item For any $z\in\sigma(-\Delta+V)$ there exists a sequence $z_N\in\Gamma_N(V)$ with $z_N\to z$ as $N\to+\infty$.
	\end{enuma}
	Attouch-Wets convergence now follows from a standard argument, cf. \cite[Prop. 2.8]{R19}.
\end{proof}

\subsection{Proof of Theorem \ref{th:main2}}
\label{sec:proof-thm2}
We can finally prove Theorem \ref{th:main2}.

\begin{proof}[Proof of part (i)]
The \emph{a priori} knowledge of $p$ can be removed by slightly modifying the provisional algorithms \eqref{eq:periodic_potential_alg} and \eqref{eq:periodic_potential_alg2}.	An algorithm that achieves $\SCI=1$ is obtained by modifying the determinant threshold in \eqref{eq:periodic_potential_alg} and \eqref{eq:periodic_potential_alg2}. If the cutoff $N^{-(\f{1}{2d} - \f{1}{2p})}$ is replaced by $\log(N)^{-1}$ and the discretization width $n(N)=N^{\lceil\alpha\rceil}$ is replaced by $n(N):=e^N$, the proof of Propositions \ref{prop:conv_1} and \ref{prop:conv_2} is valid independently of the value of $p$ (cf. eq. \eqref{eq:final_scaling}). 
\\
\\
	\emph{Proof of part (ii)}. Fix $R>0$. According to our choice of numbering $\{Z_i\}_{i\in\N}$ we have $|Z_i|>R$ for all $i>4R^2$ and thus $B_R(0)\subset \bigcup_{i\leq 4R^2} Q_{Z_i}$. Let $\delta_R = \frac12 \min\bigl\{|x-y|\,\big|\,x\in\sigma(H_0),\,y\in\sigma(\tilde H_0),\,|x|,|y|\leq R\bigr\}$, which is greater than $0$   by Lemma \ref{lemma:sigma(H)_sigma(Htilde)}. This choice implies that removing $\delta_R$-neighborhoods of the free spectra does not actually restrict our domain of computation:
	\begin{align*}
		(B_R(0)\setminus B_{\delta_R}(\sigma(H_0)))\cup (B_R(0)\setminus B_{\delta_R}(\sigma(\tilde H_0))) = B_R(0)
	\end{align*}
	for all $R>0$.
	Now let $\eps>0$. By Propositions \ref{prop:conv_1}(iii) and \ref{prop:conv_2}(iii) we have 
	\begin{equation}\label{eq:inclusion_without_AW}
		\sigma(-\Delta+V)\cap B_R(0) \subset B_\eps(\Gamma_N(V))
	\end{equation}
	as soon as $N > \max\{ 4R^2,\, \eps^{-1},\, N_{R}\}$, where $N_R = \max\{N_{\delta_R,Z_i}\,|\, i\leq 4R^2 \}$ (recall $N_{\delta,z}$ from \eqref{eq:N_{delta,z_0}_definition}).
	We recall that 
	\begin{itemize}
		\item the lower bound $4R^2$ ensures that $B_R(0)$ is covered by the search regions $Q_{Z_i}$;
		\item the lower bound $\eps^{-1}$ ensures that for every $z\in\sigma(-\Delta+V)$ there exists $z_N\in L_N$ with $|z-z_N|<\eps$;
		\item the lower bound $N_{R}$ ensures that $z_N\in\Gamma_N(V)$.
	\end{itemize}
	To conclude the proof of part (ii) (i.e., that $\Om_{p,M}^{\mathrm{Sch}}\in\Pi_1$), we need   a set $X_k$ as in Definition \ref{def:pi-sigma}. To this end, for $V\in\Om_{p,M}^{\mathrm{Sch}}$ and $k\in\N$, choose $N(k):=\max\{4k^2, 2^k, N_k\}$ and define 
	\[
	X_k := B_{2^{-k}}(\Gamma_{N(k)}(V))\cup (\C\setminus B_k(0)).
	\]
	Then by the definition of the Attouch-Wets distance $d_{\textnormal{AW}}$ (cf. Definition \ref{def:Attouch-Wets}) one has
	\begin{align*}
		d_{\mathrm{AW}}(\Gamma_{N(k)}(V),X_k) &= \sum_{n=1}^\infty 2^{-n}\min\Bigg\{ 1\,,\,\sup_{\substack{p\in\C\\|p|<n}}\left| \inf_{a\in \Gamma_{N(k)}(V)}|a-p| - \inf_{b\in X_k}|b-p| \right| \Bigg\}
		\\
		&\leq \sum_{n=1}^{k} 2^{-n}\min\Bigg\{ 1\,,\,\sup_{\substack{p\in\C\\|p|<k}}\left| \inf_{a\in \Gamma_{N(k)}(V)}|a-p| - \inf_{b\in X_k}|b-p| \right| \Bigg\} + \sum_{n=k+1}^\infty 2^{-n}
		\\
		&\leq 2^{-k} \sum_{n=1}^k 2^{-n} + 2^{-k}
		\\
		&\leq 2^{-k+1}
	\end{align*}
	where the third line follows from the definition of $X_k$. Moreover, by \eqref{eq:inclusion_without_AW} (with $\eps=2^{-k}$ and $R=k$) we have
	\begin{align*}
		\sigma(-\Delta+V) \subset X_k
	\end{align*}
	for all $k\in\N$. These facts, together with step (i) imply $\Om_{p,M}^{\mathrm{Sch}}\in\Pi_1$.
\end{proof}
\begin{remark}
	In view of Theorems \ref{th:matrix_mainth} and \ref{th:periodic_potential_mainth} it is natural to ask whether one might have $\Om_{p,M}^{\mathrm{Sch}}\in\Sigma_1$ (and therefore $\Om_{p,M}^{\mathrm{Sch}}\in\Delta_1$). This is indeed a nontrivial open problem. Recalling the proof of Theorem \ref{th:matrix_mainth} (in particular \eqref{eq:dist_bound}), the error bound for the approximation ``from below'' used the fact that $\det(zI-A(\theta))$ is a polynomial. In the proof of Theorem \ref{th:periodic_potential_mainth} on the other hand, the function $\det_{\lceil p\rceil}(I-K(z,\theta))$, whose zeros must be approximated, is only known to be \emph{analytic}. Obtaining $\Sigma_1$ classification amounts to obtaining explicit upper bounds on the width of the zeros of this analytic function. These can not be deduced in any straightforward way from the values of the potential $V$.
\end{remark}

 \section{Sometimes Two Limits Are Necessary}\label{sec:counterexample}
In Section \ref{sec:sci} we described a complicated construction called \emph{towers of algorithms}, where more than one successive limit is required to correctly perform certain computations.  In this section we exhibit this phenomenon first hand: we prove Theorem \ref{thm:main3} which is rephrased in the language of $\SCI$ as Theorem \ref{th:counterexample} below. This theorem shows that there exists a class of potentials (which are less smooth, yet can be evaluated at any point) for which there do \emph{not} exist algorithms that can approximate the associated spectral problem in a single limit. That is,  $\SCI>1$. We \emph{are} able, though,  to construct an arithmetic algorithm which converges by taking \emph{two} successive limits. That is, $\SCI=2$. This class contains potentials which are allowed to have a  singularity at a single point $x_0\in(0,1)$, but are otherwise smooth:
\begin{align*}
	\Om_{x_0}^{\mathrm{Sch}} &:= \left\{ V:\R\to\R \,\middle|\, V \text{ is 1-periodic, } V(x_0)=0 \text{ and }V|_{[0,1]}\in L^2\big([0,1]\big)\cap C^\infty\big([0,1]\setminus\{x_0\}\big) \right\}.
\end{align*}
We emphasize that $\Om_{x_0}^{\mathrm{Sch}}$ contains only real-valued functions and that the pointwise evaluation $V\mapsto V(x)$ is well-defined for all $x\in\R$. Moreover, we remark that by \cite[Th. XIII.96]{RS4} the Schr\"odinger operator $-\Delta+V$ is well-defined and selfadjoint with domain $H^2(\R)$. We shall therefore consider the computational problem

\begin{align}\label{eq:periodic_computational_problem-sch-x0}
\left\{
\begin{array}{rl}
	\hfill \Omega &=\quad  \Omega^{\mathrm{Sch}}_{x_0}\\[1mm]
	\mathcal M &=\quad \big(\{K\subset\C\,|\, K \text{ closed}\}, \, d_{\mathrm{AW}}\big)\\[1mm]
	\Lambda &=\quad \{V\mapsto V(x) \,|\, x\in\R^d \}\\[2mm]
	\Xi &: \quad\Om \to \mathcal M;\quad
	V\mapsto \sigma(-\partial_x^2+V),
\end{array}
\right.
\end{align}
and shall prove 
\begin{theorem}\label{th:counterexample}
The computational problem \eqref{eq:periodic_computational_problem-sch-x0} has $\SCI=2$ (equivalently, it belongs to $\Delta_3\setminus\Delta_2$).
\end{theorem}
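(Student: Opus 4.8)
\emph{Plan of proof.} The statement has two halves: that the problem lies in $\Delta_3$ (a tower of arithmetic algorithms of height $2$ exists) and that it does \emph{not} lie in $\Delta_2$ (no single convergent sequence of arithmetic algorithms exists). I would prove them separately.

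\emph{Positive part ($\SCI\le 2$).} The idea is to remove the singularity by a cut-off and then iterate. Fix a $1$-periodic, Lipschitz, pointwise-computable cut-off $\chi_m$ with $\chi_m\equiv 0$ on a $\tfrac1{2m}$-neighbourhood of $x_0+\Z$ and $\chi_m\equiv 1$ outside the $\tfrac1m$-neighbourhood, and set $V_m:=\chi_m V$. Since $V\in C^\infty([0,1]\setminus\{x_0\})$ and $\chi_m$ vanishes near $x_0$, one has $V_m|_{(0,1)}\in W^{1,p}((0,1))$ for every $p>1$, so $V_m\in\Omega^{\mathrm{Sch}}$; moreover each point value $V_m(x)=\chi_m(x)V(x)$ is obtained from one point value of $V$ by finitely many arithmetic operations. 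Feeding $V_m$ to the height-$1$ tower $\{\Gamma^{(m)}_{n}\}$ supplied by Theorem \ref{th:main2}(i) and setting $\Gamma_{n_1,n_2}(V):=\Gamma^{(n_2)}_{n_1}(V_{n_2})$ produces arithmetic algorithms with $\lim_{n_1\to\infty}\Gamma_{n_1,n_2}(V)=\sigma(-\Delta+V_{n_2})$. It then remains to check that $\sigma(-\Delta+V_m)\to\sigma(-\Delta+V)$ in $d_{\mathrm{AW}}$ as $m\to\infty$. This follows from $\|V-V_m\|_{L^2((0,1))}\to 0$ (dominated convergence) together with standard norm-resolvent perturbation theory: in one dimension an $L^2((0,1))$ potential is infinitesimally form-bounded with respect to $-\Delta$ on the torus, uniformly over the Floquet parameter $\theta$, so $(H(\theta)-z)^{-1}$ depends norm-continuously on the potential uniformly in $\theta$; hence $\sigma(H_{V_m}(\theta))\to\sigma(H_V(\theta))$ uniformly in $\theta$, and passing to the union over $\theta$ (which by Lemma \ref{lemma:Floquet} gives the full spectrum) yields the claim. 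Thus $\{\Gamma_{n_1,n_2}\}$ is a tower of height $2$ and the problem lies in $\Delta_3$.

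\emph{Negative part (not in $\Delta_2$).} Here I would argue by contradiction: assume $\{\Gamma_n\}_{n\in\N}$ is a sequence of arithmetic algorithms with $\Gamma_n(V)\to\sigma(-\partial_x^2+V)$ for every $V\in\Omega^{\mathrm{Sch}}_{x_0}$, and construct, by an adaptive diagonal argument against this sequence, a single $W\in\Omega^{\mathrm{Sch}}_{x_0}$ on which convergence fails. Two facts drive the construction. First, the locality of arithmetic algorithms: by Definition \ref{def:Algorithm}, for each $M$ and each input $T$ the algorithms $\Gamma_1,\dots,\Gamma_M$ together read only finitely many point values of $T$, and their outputs do not change if $T$ is altered away from those finitely many points; in particular one may always alter a potential on a neighbourhood of $x_0$ small enough to avoid those points (the point $x_0$ itself being harmless, since every admissible potential vanishes there). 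Second, a spectral lever sited at $x_0$: I would exhibit potentials in $\Omega^{\mathrm{Sch}}_{x_0}$ realising a Kronig--Penney/$\delta$-comb type mechanism — a tall, thin bump-comb of prescribed mass concentrated near $x_0$ opens a spectral gap of a fixed size, yet is invisible to point evaluations that miss its thin support — so that one has, at each stage, two admissible potentials agreeing on any prescribed finite point set near $x_0$ but with spectra a fixed $d_{\mathrm{AW}}$-distance apart. Building $W$ as the limit of potentials $W_j$ that agree with $W_{j-1}$ outside ever-smaller neighbourhoods of $x_0+\Z$, chosen so that $\Gamma_1,\dots,\Gamma_{M_j}$ read $W_{j-1}$ only outside the next such neighbourhood while $\Gamma_{M_j}(W_{j-1})$ is forced (by the assumed convergence) to within $\tfrac13\delta$ of $\sigma(-\partial_x^2+W_{j-1})$, one gets $\Gamma_{M_j}(W)=\Gamma_{M_j}(W_{j-1})$ for all $j$; if the construction keeps $\sigma(-\partial_x^2+W_{j-1})$ a distance $\ge\delta$ from $\sigma(-\partial_x^2+W)$ for infinitely many $j$, this contradicts $\Gamma_n(W)\to\sigma(-\partial_x^2+W)$. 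With the positive part this gives $\SCI=2$.

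\emph{Where the real work lies.} The hard part is the negative direction, and specifically reconciling three competing demands in the construction of $W$: the flipped spectral feature must be of \emph{fixed} size; $W$ must have finite $L^2([0,1])$-norm, which forces the per-stage modifications to decay and hence the lever to be erected and dismantled at ever-finer scales accumulating at $x_0$ rather than built from one fixed bump; and the failure must be genuine for the \emph{single} input $W$, i.e.\ infinitely many $\Gamma_{M_j}$ must be trapped at once — which in turn requires that $\sigma(-\partial_x^2+W_{j-1})$ does \emph{not} converge to $\sigma(-\partial_x^2+W)$ even though $W_j\to W$ pointwise off $x_0$. Making the $\delta$-comb lever survive this limiting procedure — equivalently, quantifying precisely how much the spectrum of $-\partial_x^2+V$ can be moved by a mass-carrying rearrangement of $V$ near $x_0$ that is hidden from any finite set of point evaluations, while retaining $V\in L^2$ — is the crux, and is exactly what separates $\Omega^{\mathrm{Sch}}_{x_0}$ from the $W^{1,p}$ class of Theorem \ref{th:main2}, where such hidden rearrangements are ruled out by the Sobolev bound.
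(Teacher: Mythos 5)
Your positive half ($\SCI\le 2$) is essentially the paper's argument: cut the potential off near $x_0$ by a computable periodic cut-off, feed the truncation to the algorithm of Theorem \ref{th:main2}, and verify $\sigma(-\partial_x^2+\chi_m V)\to\sigma(-\partial_x^2+V)$; the paper does this last step by a Neumann-series/norm-resolvent argument on the whole line (Lemma \ref{lemma:spectral_convergence}) rather than fibrewise in $\theta$, but that is a cosmetic difference. The genuine gap is in the negative half, and it sits exactly where you say ``the real work lies'': you never produce the spectral mechanism, and the one you sketch is both harder than necessary and likely unworkable under the constraints you yourself list. Concretely: (1) your scheme needs the comb to be invisible to the point evaluations, but the evaluations of $\Gamma_{M_j}$ on the comb-carrying input are \emph{adaptive} -- the algorithm may sample on the comb's support, so you can neither place the comb after seeing the samples (it must already be present in the input the algorithm is run on) nor dismantle it later without changing sampled values; (2) a spectral displacement of \emph{fixed} size produced by bumps confined to shrinking neighbourhoods of $x_0$ requires non-decaying $L^1$ mass, which is incompatible with the limit potential lying in $L^2([0,1])$ if those bumps persist, and if they do not persist you lose the separation; (3) you additionally demand that $\sigma(-\partial_x^2+W_{j-1})$ stay a fixed $d_{\mathrm{AW}}$-distance from $\sigma(-\partial_x^2+W)$ even though $W_{j-1}\to W$, which is a strong requirement nowhere justified.

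The paper escapes all three difficulties by reversing the roles of ``seen'' and ``real''. The fixed spectral feature lives only in auxiliary potentials $V_k=\tilde V_{k-1}+V_{h_k,x_0-\eps,x_0+\eps}$: a genuine deep square well near $x_0$ forces $\inf W(-\partial_x^2+V_k)\le-1$ (Lemma \ref{lemma:upper_bound_W}), so the assumed convergence forces some $\Gamma_{n_k}(V_k)$ to reach below $-1/2$. Only \emph{afterwards} is the well thinned to narrow spikes supported exactly at the finitely many points sampled by $\Gamma_{n_k}$; the sampled values are unchanged, so by Definition \ref{def:Algorithm} the committed output is unchanged, while the $L^2$ mass added at stage $k$ is at most $\eta 2^{-k}$. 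Lemma \ref{lemma:numerical_range} then pins the spectrum of every thinned potential, and of the limit $V^\eta$ (which has $\|V^\eta\|_{L^2}\le 2\eta$), above $-1/4$. The contradiction is thus between the committed outputs $\Gamma_{n_k}(V^\eta)=\Gamma_{n_k}(V_k)$ and $\sigma(-\partial_x^2+V^\eta)$; no lower bound on $d_{\mathrm{AW}}\bigl(\sigma(-\partial_x^2+\tilde V_k),\sigma(-\partial_x^2+V^\eta)\bigr)$ is needed -- indeed those spectra all stay above $-1/4$, and the potentials carrying the deviant spectra, the $V_k$, never converge to anything. In short, the algorithm is not deceived by a real feature it cannot see, but by data consistent with a feature that is not there; without this (or some equally concrete) mechanism, your negative half remains a plan rather than a proof.
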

The proof of Theorem \ref{th:counterexample} has two parts. To prove $\SCI\leq 2$ we construct an explicit tower of algorithms that computes the spectrum in two limits (cf. Definition \ref{def:Tower}). The proof of $\SCI> 1$ is by contradiction. We assume the existence of a sequence $\{\Gamma_n\}_{n\in\N}$ with $\Gamma_n(V)\to\sigma(-\partial_x^2+V)$ for every $V\in\Om_{x_0}^{\mathrm{Sch}}$ and via a diagonal process construct a potential $V\in\Om_{x_0}^{\mathrm{Sch}}$ such that $\Gamma_n(V)\nrightarrow\sigma(-\partial_x^2+V)$, yielding the desired contradiction.

\subsection{Lemmas}
We first collect  some technical lemmas that will be necessary for the proof.
\begin{lemma}\label{lemma:numerical_range}
	Denote by $W(H)$  the numerical range of an operator $H$. Let $V\in\Om_{x_0}^{\mathrm{Sch}}$. There exists $\delta>0$ such that $\inf W(-\partial_x^2+V) \geq -\f14$ whenever $\|V\|_{L^1([0,1])}<\delta$.
\end{lemma}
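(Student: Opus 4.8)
The plan is to reduce the statement to a quantitative bound on the potential part of the quadratic form. Since $\dom(-\partial_x^2+V)=H^2(\R)$ and $V$ is real-valued, for $f\in H^2(\R)$ with $\|f\|_{\LR}=1$ one has, after an integration by parts,
\[
	\B{(-\partial_x^2+V)f}{f}=\|f'\|_{\LR}^2+\int_\R V|f|^2\,dx,
\]
so it suffices to show that $\int_\R V|f|^2\,dx\geq -\|f'\|_{\LR}^2-\tfrac14$ for all such $f$ once $\|V\|_{L^1([0,1])}$ is sufficiently small; taking the infimum over $f$ then yields $\inf W(-\partial_x^2+V)\geq-\tfrac14$.

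First I would localise to unit cells. Using $1$-periodicity of $|V|$, for each $n\in\Z$,
\[
	\Big|\int_n^{n+1}V|f|^2\,dx\Big|\leq\|V\|_{L^1([n,n+1])}\,\|f\|_{L^\infty([n,n+1])}^2=\|V\|_{L^1([0,1])}\,\|f\|_{L^\infty([n,n+1])}^2.
\]
Next I would invoke the one-dimensional Sobolev (Ehrling) inequality on an interval of length one: for every $\eps>0$ there is a constant $C_\eps$, \emph{independent of} $n$ by translation invariance, with
\[
	\|f\|_{L^\infty([n,n+1])}^2\leq\eps\,\|f'\|_{L^2([n,n+1])}^2+C_\eps\,\|f\|_{L^2([n,n+1])}^2,
\]
which one proves from $|f(x)|^2\leq\|f\|_{L^2([n,n+1])}^2+2\|f\|_{L^2([n,n+1])}\|f'\|_{L^2([n,n+1])}$ (fundamental theorem of calculus applied to $|f|^2$, then Cauchy--Schwarz and integration in the second variable) followed by Young's inequality; one may take $C_\eps=1+\eps^{-1}$.

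Summing the last two displays over $n\in\Z$ and using $\|f\|_{\LR}=1$ gives
\[
	\Big|\int_\R V|f|^2\,dx\Big|\leq\|V\|_{L^1([0,1])}\big(\eps\,\|f'\|_{\LR}^2+C_\eps\big).
\]
Fixing $\eps=1$ (so that $C_1=2$) and choosing $\delta:=\tfrac18$, any $V$ with $\|V\|_{L^1([0,1])}<\delta$ satisfies
\[
	\B{(-\partial_x^2+V)f}{f}\geq(1-\delta)\|f'\|_{\LR}^2-2\delta\geq-\tfrac14,
\]
which is exactly the desired bound. I do not expect a genuine obstacle in this argument: the only points requiring care are that the interpolation constant $C_\eps$ must be uniform over the cells (handled by translation invariance) and that all the series converge, which holds since $f\in H^1(\R)$ makes $\sum_{n\in\Z}\big(\|f'\|_{L^2([n,n+1])}^2+\|f\|_{L^2([n,n+1])}^2\big)$ finite; this last fact also shows $Vf\in\LR$ for $f\in H^2(\R)$ (using $V|_{[0,1]}\in L^2$) and hence legitimises the form identity above on $H^2(\R)$.
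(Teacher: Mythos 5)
Your argument is correct and follows essentially the same route as the paper: both reduce the claim to a relative form bound of the type $\bigl|\int_\R V|f|^2\,dx\bigr|\leq \|V\|_{L^1([0,1])}\bigl(\eps\|f'\|_{L^2(\R)}^2+C_\eps\|f\|_{L^2(\R)}^2\bigr)$, obtained by exploiting $1$-periodicity cell by cell together with a local sup-norm (Sobolev-type) estimate, and then absorb the potential term into the kinetic energy. The only differences are cosmetic: the paper uses a partition of unity and the embedding $W^{1,1}\hookrightarrow L^\infty$, producing an unspecified constant $C'$ and $\delta=(4C')^{-1}$, whereas your disjoint unit-cell splitting with the elementary interpolation inequality gives the explicit choice $\delta=\tfrac18$.
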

\begin{proof}
	Choose a partition of unity $\{\chi_i\}_{i\in\Z}$ such that $\supp(\chi_i)\subset [i-1,i+1]$ and $\sup_{i\in\Z}\|\chi_i\|_{W^{1,\infty}}<+\infty$. Then for any $\phi\in C_0^\infty(\R)$ we have
	\begin{align*}
		\int_\R V(x)|\phi(x)|^2 \,dx &= \sum_{i\in\Z} \int_{\R} V(x)\chi_i(x)|\phi(x)|^2 \,dx
		\\
		&= \sum_{i\in\Z} \int_{i-1}^{i+1} V(x)\chi_i(x)|\phi(x)|^2 \,dx
		\\
		&= \sum_{i\in\Z} \int_{-1}^{1} V(x)\chi_i(x-i)|\phi(x-i)|^2 \,dx,
	\end{align*}
	where the last line follows from periodicity of $V$. Moreover, we have
	\begin{align}
		\left| \int_{-1}^{1} V(x)\chi_i(x-i)|\phi(x-i)|^2 \,dx \right| 
		&\leq \|V\|_{L^1([-1,1])} \big\| \chi_i(\cdot-i)|\phi(\cdot-i)|^2 \big\|_{L^\infty([-1,1])}
		\nonumber
		\\
		&\leq C\|V\|_{L^1([0,1])} \big\|\chi_i(\cdot-i)|\phi(\cdot-i)|^2 \big\|_{W^{1,1}([-1,1])}
		\nonumber
		\\
		&\leq C \|V\|_{L^1([0,1])} \|\chi_i\|_{W^{1,\infty}} \|\phi(\cdot-i)\|_{H^1([-1,1])}^2,
		\label{eq:summing_inequality}
	\end{align}
	where the second line follows from the Sobolev embedding $W^{1,1}\hookrightarrow L^\infty$ (cf. \cite[Th. 8.8]{Brezis}) and the third follows by the product rule and H\"older's inequality. Summing \eqref{eq:summing_inequality} over $i$ we obtain
	\begin{align}
		\left|\int_\R V(x)|\phi(x)|^2 \,dx\right| 
		&\leq C \|V\|_{L^1([0,1])} \sum_{i\in\Z} \|\chi_i\|_{W^{1,\infty}} \|\phi(\cdot-i)\|_{H^1([-1,1])}^2
		\nonumber
		\\
		&\leq 2C \|V\|_{L^1([0,1])} \Big(\sup_{i\in\Z}\|\chi_i\|_{W^{1,\infty}}\Big) \|\phi\|_{H^1(\R)}^2
		\nonumber
		\\
		&\leq C' \|V\|_{L^1([0,1])} \|\phi\|_{H^1(\R)}^2,
		\label{eq:summed_inequality}
	\end{align}
	where $C' = 2C\sup_{i\in\Z}\|\chi_i\|_{W^{1,\infty}}$ is finite by choice of the functions $\chi_i$.
	Turning to the numerical range, we have for $\phi\in C_0^\infty(\R)$
	\begin{align*}
		\langle(-\partial_x^2+V)\phi,\phi\rangle_{L^2(\R)} &= \int_\R\left( |\phi'|^2 + V|\phi|^2\right)dx
		\\
		&\geq \|\phi'\|_{L^2(\R)}^2 - C'\|V\|_{L^1([0,1])} \|\phi\|_{H^1(\R)}^2
		\\
		&= (1-C'\|V\|_{L^1([0,1])})\|\phi'\|_{L^2(\R)}^2 - C'\|V\|_{L^1([0,1])}\|\phi\|_{L^2(\R)}^2,
	\end{align*}
	where the second line follows from \eqref{eq:summed_inequality}. Finally, let $\delta:=(4C')^{-1}$. Then if $\|V\|_{L^1([0,1])}<\delta$ one has
	\begin{align*}
		\langle(-\partial_x^2+V)\phi,\phi\rangle_{L^2(\R)} 
		\geq \f34 \|\phi'\|_{L^2(\R)}^2 - \f14 \|\phi\|_{L^2(\R)}^2
		\geq  - \f14 \|\phi\|_{L^2(\R)}^2
	\end{align*}
	and the proof is complete.
\end{proof}
\begin{lemma}\label{lemma:upper_bound_W}
	For $h>0$, $q_1,q_2\in[0,1]$ with $0<q_1<q_2<1$, define the periodic step function
	\begin{align*}
		V_{h,q_1,q_2}(x) = \begin{cases}
			-h & x\in[q_1,q_2]\\
			0 & x\in([0,1]\setminus[q_1,q_2]).
		\end{cases}
	\end{align*}
	Then for every $\tilde V\in\Om_{x_0}^{\mathrm{Sch}}$ with $\tilde V\leq 0$ one has
	\begin{align*}
		\inf W\big(-\partial_x^2+\tilde V + V_{h,q_1,q_2}\big) \leq -(q_2-q_1)h.
	\end{align*}
\end{lemma}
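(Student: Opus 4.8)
The plan is an elementary variational (trial‑function) argument. Write $H:=-\partial_x^2+\tilde V+V_{h,q_1,q_2}$; since this potential is real, $1$‑periodic and in $L^2([0,1])$ we have $C_0^\infty(\R)\subset\dom(H)$, and for $\phi\in C_0^\infty(\R)$ integration by parts gives $\langle H\phi,\phi\rangle=\|\phi'\|_{L^2(\R)}^2+\int_\R(\tilde V+V_{h,q_1,q_2})|\phi|^2\,dx$. Consequently $\inf W(H)$ is bounded above by the Rayleigh quotient $\langle H\phi,\phi\rangle/\|\phi\|_{L^2(\R)}^2$ for every nonzero $\phi\in C_0^\infty(\R)$, so it suffices to exhibit a family $(\phi_L)_{L>2}$ along which this quotient tends to $-(q_2-q_1)h$ as $L\to\infty$. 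The idea is to let $\phi_L$ be a long plateau of height $1$ spanning $\approx L$ periods: the kinetic cost then stays bounded, while the potential energy picks up a gain proportional to $L$ times the average of $V_{h,q_1,q_2}$, which is exactly $-(q_2-q_1)h$, and $\tilde V\le0$ can only help.

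Concretely, I would fix once and for all $\psi\in C_0^\infty(\R)$ with $0\le\psi\le1$, $\psi\equiv0$ on $(-\infty,0]$ and $\psi\equiv1$ on $[1,\infty)$, and set $\phi_L(x):=\psi(x)\psi(L-x)$. Then $\phi_L\in C_0^\infty(\R)$, $\supp\phi_L\subset[0,L]$, $0\le\phi_L\le1$, $\phi_L\equiv1$ on $[1,L-1]$, and — the key structural point — $\phi_L'$ is supported in the two fixed‑width windows $[0,1]$ and $[L-1,L]$, where it is merely a translate/reflection of $\psi'$, so $\|\phi_L'\|_{L^2(\R)}^2=2\|\psi'\|_{L^2((0,1))}^2=:c_0^2$ is independent of $L$. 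The three ingredients of the quotient are then estimated with all errors $O(1)$ (bounded in $L$, possibly depending on $h,q_1,q_2$): (a) $\|\phi_L\|_{L^2(\R)}^2=(L-2)+O(1)=L+O(1)$, since $|\phi_L|^2\equiv1$ on $[1,L-1]$ and $0\le|\phi_L|^2\le1$ on a set of measure $2$; (b) $\int_\R\tilde V|\phi_L|^2\,dx\le0$ because $\tilde V\le0$ (the integral being finite as $\tilde V\in L^2_{\mathrm{loc}}(\R)$ and $\phi_L$ has compact support); (c) writing $S:=\{x\in\R:V_{h,q_1,q_2}(x)=-h\}=\bigcup_{k\in\Z}[q_1+k,q_2+k]$, one has $|S\cap[1,L-1]|=(q_2-q_1)L+O(1)$, the error coming from the at most two partial wells meeting the ends of $[1,L-1]$ and hence bounded by $q_2-q_1\le1$, so, using $-h\le V_{h,q_1,q_2}\le0$ together with $|\phi_L|^2\equiv1$ on $[1,L-1]$ and $0\le|\phi_L|^2\le1$ on the transition windows, $\int_\R V_{h,q_1,q_2}|\phi_L|^2\,dx\le-h(q_2-q_1)L+O(1)$.

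Combining (a)--(c) gives
\[
	\frac{\langle H\phi_L,\phi_L\rangle}{\|\phi_L\|_{L^2(\R)}^2}\;\le\;\frac{c_0^2+O(1)-h(q_2-q_1)L}{L+O(1)}\;\longrightarrow\;-(q_2-q_1)h\qquad(L\to\infty),
\]
whence $\inf W(H)\le-(q_2-q_1)h$, as claimed. The only point that is not pure bookkeeping is item (c): decoupling the \emph{localized} kinetic cost of raising a plateau (which remains $O(1)$) from the \emph{extensive} potential gain $\asymp h(q_2-q_1)L$ accumulated over the $\asymp L$ periodic wells that the plateau covers — i.e.\ the density count $|S\cap[1,L-1]|=(q_2-q_1)L+O(1)$ together with making the various $O(1)$'s uniform in $L$. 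Beyond that I do not anticipate any real obstacle.
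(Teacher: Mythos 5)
Your proof is correct and takes essentially the same route as the paper's: a trial-function estimate of the Rayleigh quotient using a wide, nearly flat function spread over many periods, so that the kinetic term stays $O(1)$ while the potential contributes its period average $-(q_2-q_1)h$ and $\tilde V\le 0$ is simply dropped (the paper uses a piecewise-linear function of height $1/\sqrt{2n}$ instead of a smooth plateau normalized afterwards). One cosmetic slip: your cutoff $\psi$ cannot be in $C_0^\infty(\R)$ if $\psi\equiv 1$ on $[1,\infty)$ --- take $\psi\in C^\infty(\R)$ bounded with those properties; the product $\phi_L(x)=\psi(x)\psi(L-x)$ is then still in $C_0^\infty(\R)$ and the argument goes through unchanged.
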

\begin{proof}
	Define the sequence of test functions
	\begin{align*}
		\phi_n(x)= \begin{cases}
			\f{1}{\sqrt{2n}} &x\in[-n,n]
			\\
			\text{linear to } 0 & x\in [-n-1,-n) \cup (n,n+1]\\
			0 & \textnormal{otherwise.}
		\end{cases}
	\end{align*}
	We note that $\|\phi_n\|_{L^2(\R)}\to 1$ as $n\to+\infty$.
	Then if $\tilde V\in\Om_{x_0}^{\mathrm{Sch}}$ with $\tilde V\leq 0$ we have
	\begin{align*}
		\langle(-\partial_x^2+V)\phi_n, \phi_n \rangle_{L^2(\R)}
		&= \int_{\R} |\phi_n'|^2\,dx + \int_\R (\tilde V + V_{h,q_1,q_2})|\phi_n|^2 \,dx
		\\
		&\leq \int_{\R} |\phi_n'|^2\,dx + \int_\R V_{h,q_1,q_2}|\phi_n|^2 \,dx
		\\
		&\leq \int_{-n-1}^{-n} \f{1}{2n} \,dx + \int_{n}^{n+1} \f{1}{2n}\,dx - \sum_{i=-n}^{n} \int_{[q_1,q_2]+i} \f{h}{2n}\,dx
		\\
		&= \f1n - (q_2-q_1)h.
	\end{align*}
	The assertion follows by letting $n\to+\infty$.
\end{proof}
The next lemma is needed to construct the tower of algorithms that will compute the spectrum of elements in $\Om^{\mathrm{Sch}}_{x_0}$.
\begin{lemma}\label{lemma:spectral_convergence}
	Let $V\in\Om_{x_0}^{\mathrm{Sch}}$ and for $n\in\N$ let $\rho_n\in W^{1,\infty}(\R)$ be a periodic function such that 
	\begin{align}\label{eq:regularized_potential}
		\rho_n(x) = \begin{cases}
			0 &  x\in \bigcup_{k\in\Z}(k-\tfrac1n, k+\tfrac1n) 
			\\
			1 &   x\in \R\setminus \bigcup_{k\in\Z}(k-\tfrac2n, k+\tfrac2n),
		\end{cases}
	\end{align}
	then one has $\sigma(-\partial_x^2+\rho_n V) \to \sigma(-\partial_x^2+V)$ in Attouch-Wets distance.
\end{lemma}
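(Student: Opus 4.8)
The plan is to reduce the statement to \emph{norm resolvent convergence} of $H_n:=-\partial_x^2+\rho_nV$ to $H:=-\partial_x^2+V$, from which Attouch--Wets convergence of the spectra follows by a standard argument. First I would record that $\rho_nV\to V$ in $L^2([0,1])$: writing $W_n:=(1-\rho_n)V$, one has $|W_n|\le|V|$ pointwise and $W_n(x)\to0$ for a.e.\ $x$ (for $x\notin\Z$ and $V(x)$ finite, eventually $\rho_n(x)=1$), so dominated convergence gives $\|W_n\|_{L^2([0,1])}\to0$; here I use $0\le\rho_n\le1$, which holds for the cutoffs employed in the tower construction below. Both $H$ and $H_n$ are self-adjoint on $H^2(\R)$ by \cite[Th.~XIII.96]{RS4} (as already noted), since every $1$-periodic $L^2_{\mathrm{loc}}$ potential is $-\partial_x^2$-bounded with relative bound $0$ in one dimension.

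The technical core is the estimate that for any $1$-periodic $W\in L^2_{\mathrm{loc}}(\R)$ and any $\kappa\ge1$,
\begin{equation*}
	\bigl\|W(-\partial_x^2+\kappa^2)^{-1}\bigr\|_{L^2\to L^2}\le C\kappa^{-1}\|W\|_{L^2([0,1])},
\end{equation*}
with $C$ an absolute constant. I would prove this by localisation: for $f\in L^2$ set $g:=(-\partial_x^2+\kappa^2)^{-1}f$, split $\|Wg\|_{L^2}^2=\sum_{k\in\Z}\int_k^{k+1}|W|^2|g|^2\le\|W\|_{L^2([0,1])}^2\sum_k\|g\|_{L^\infty([k,k+1])}^2$ using periodicity of $W$, bound $\|g\|_{L^\infty([k,k+1])}^2\le C\|g\|_{H^1([k,k+1])}^2$ by the one-dimensional Sobolev embedding on a unit interval (translation-invariant constant), sum to get $\sum_k\|g\|_{H^1([k,k+1])}^2=\|g\|_{H^1(\R)}^2$, and finally use the Fourier-side bound $\|g\|_{H^1(\R)}\le\kappa^{-1}\|f\|_{L^2}$, valid for $\kappa\ge1$. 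Applying this with $W=\rho_nV$ (so $\|W\|_{L^2([0,1])}\le\|V\|_{L^2([0,1])}=:\nu$), a Neumann-series argument shows that for $\kappa_0:=\max\{1,2C\nu\}$ one has $(-\infty,-\kappa_0^2)\subset\rho(H_n)\cap\rho(H)$, i.e.\ $\sigma(H_n),\sigma(H)\subset[-\kappa_0^2,\infty)$ uniformly in $n$.

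Next I would fix $\kappa>\kappa_0$, put $z=-\kappa^2$, and apply the second resolvent identity
\begin{equation*}
	(H_n-z)^{-1}-(H-z)^{-1}=-(H_n-z)^{-1}W_n(H-z)^{-1}.
\end{equation*}
Here $\|(H_n-z)^{-1}\|\le(\kappa^2-\kappa_0^2)^{-1}$, and, using $(H-z)^{-1}=(-\partial_x^2-z)^{-1}-(-\partial_x^2-z)^{-1}V(H-z)^{-1}$ together with the boundedness of $V(H-z)^{-1}=I-(-\partial_x^2-z)(H-z)^{-1}$ (the latter because $(H-z)^{-1}$ maps $L^2$ into $H^2=\dom(H)$), one obtains $\|W_n(H-z)^{-1}\|\le C'\kappa^{-1}\|W_n\|_{L^2([0,1])}$ with $C'$ depending only on $\kappa$ and $V$. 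Hence $\|(H_n-z)^{-1}-(H-z)^{-1}\|\to0$, and by the resolvent identity the same holds for every $z\in\rho(H)$; thus $H_n\to H$ in norm resolvent sense. From this I would extract the two set-convergence facts — (a) if $z_n\in\sigma(H_n)$ and $z_n\to z$ then $z\in\sigma(H)$, and (b) every $z\in\sigma(H)$ is the limit of some $z_n\in\sigma(H_n)$ — and, together with the uniform lower bound $\sigma(H_n)\subset[-\kappa_0^2,\infty)$, Attouch--Wets convergence then follows by the standard argument of \cite[Prop.~2.8]{R19}, exactly as in the proof of Theorem \ref{th:periodic_potential_mainth}.

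The main obstacle is the operator estimate in the second paragraph: because $(-\partial_x^2+\kappa^2)^{-1}$ fails to be of any global Schatten class once multiplied by a nonzero periodic potential, one cannot invoke the Hilbert--Schmidt-type bounds of Section \ref{sec:Schatten_estimates}, and the localisation argument above is needed instead; the other point requiring attention is keeping all constants uniform in $n$, which is precisely what the inequality $\|\rho_nV\|_{L^2([0,1])}\le\|V\|_{L^2([0,1])}$ and the explicit $\kappa^{-1}$-decay accomplish.
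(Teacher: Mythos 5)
Your proposal is correct and takes essentially the same route as the paper: the same localisation argument (unit intervals, periodicity of the potential, one-dimensional Sobolev embedding $H^1\hookrightarrow L^\infty$) controls the potential-times-resolvent operator norm by $\|(1-\rho_n)V\|_{L^2([0,1])}$, giving norm resolvent convergence and hence Attouch--Wets convergence of the spectra. The only differences are organisational — the paper runs a Neumann series at non-real $z$ using the $L^2\to H^1$ bound of the perturbed resolvent and invokes \cite[Thms.~VIII.23--24]{RS}, whereas you estimate the free resolvent with explicit $\kappa^{-1}$ decay, obtain a uniform lower spectral bound, and use the second resolvent identity — and both variants are sound.
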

\begin{proof}

A Neumann series argument shows that
	\begin{align}\label{eq:neumann_series}
		(-\partial_x^2 + \rho_n V - z)^{-1} - (-\partial_x^2 + V - z)^{-1} = \sum_{m=1}^\infty \big( (\rho_n-1)V(-\partial_x^2 + V - z)^{-1} \big)^m,
	\end{align}
	where $z\notin\R$ and the right hand side is defined whenever $\|(\rho_n-1)V(-\partial_x^2 + V - z)^{-1}\|_{L^2\to L^2}<1$. We will prove that the right hand side of \eqref{eq:neumann_series} converges to 0 in the norm resolvent sense. To this end let $u\in L^2(\R)$ and  compute
	\begin{align}
		\left\| (\rho_n-1)V(-\partial_x^2 + V - z)^{-1} u \right\|_{L^2(\R)}^2 
		&= \sum_{k=-\infty}^\infty \left\| (\rho_n-1)V(-\partial_x^2 + V - z)^{-1} u \right\|_{L^2([k,k+1])}^2
		\nonumber 
		\\
		&\leq \sum_{k=-\infty}^\infty \left\| (\rho_n-1)V\right\|_{L^2([k,k+1])}^2 \left\|(-\partial_x^2 + V - z)^{-1} u \right\|_{L^\infty([k,k+1])}^2
		\nonumber 
		\\
		&= \left\| (\rho_n-1)V\right\|_{L^2([0,1])}^2 \sum_{k=-\infty}^\infty  \left\|(-\partial_x^2 + V - z)^{-1} u \right\|_{L^\infty([k,k+1])}^2
		\nonumber 
		\\
		&\leq C\left\| (\rho_n-1)V\right\|_{L^2([0,1])}^2 \sum_{k=-\infty}^\infty  \left\|(-\partial_x^2 + V - z)^{-1} u \right\|_{H^1([k,k+1])}^2
		\nonumber 
		\\
		&= C\left\| (\rho_n-1)V\right\|_{L^2([0,1])}^2   \left\|(-\partial_x^2 + V - z)^{-1} u \right\|_{H^1(\R)}^2
		\nonumber 
		\\
		&= C\left\| (\rho_n-1)V\right\|_{L^2([0,1])}^2   \left\|(-\partial_x^2 + V - z)^{-1}\right\|_{L^2(\R)\to H^1(\R)}^2 \|u\|_{L^2(\R)}^2,
		\label{eq:rhs_inequality}
	\end{align}
	where H\"older's inequality was used in the second line, periodicity of $(\rho_n-1)V$ was used in the third line and the Sobolev embedding $H^1(\R)\hookrightarrow L^\infty(\R)$ was used in the fouth line. Combining eqs. \eqref{eq:neumann_series} and \eqref{eq:rhs_inequality} we find that
	\begin{align*}
		\big\|(-\partial_x^2 + \rho_n V - z)^{-1} - & (-\partial_x^2 + V - z)^{-1}\big\|_{L^2(\R)\to L^2(\R)}
		\\
		&\leq C \sum_{m=1}^\infty \left\| (\rho_n-1)V\right\|_{L^2([0,1])}^m \left\|(-\partial_x^2 + V - z)^{-1}\right\|_{L^2(\R)\to H^1(\R)}^m ,
	\end{align*}
	for all $n>N$ large enough such that $\| (\rho_n-1)V\|_{L^2([0,1])} < \|(-\partial_x^2 + V - z)^{-1}\|_{L^2\to H^1}^{-1}$. Since $V\in L^2([0,1])$ we know that $\| (\rho_n-1)V\|_{L^2([0,1])}\to 0$, hence such $N$ must exist. For  $n>N$ the geometric series gives 
	\begin{align*}
		& \big\|(-\partial_x^2 + \rho_n V - z)^{-1} - (-\partial_x^2 + V - z)^{-1}\big\|_{L^2(\R)\to L^2(\R)} 
		\leq C \f{q_n}{1-q_n} ,
	\end{align*}
	with $q_n = \| (\rho_n-1)V\|_{L^2([0,1])}  \|(-\partial_x^2 + V - z)^{-1}\|_{L^2\to H^1}$, which immediately implies norm resolvent convergence. Finally, an application of \cite[Thms. VIII.23-24]{RS} yields the desired spectral convergence.
\end{proof}
\subsection{Proof of Theorem \ref{th:counterexample}}We are now ready to prove Theorem \ref{th:counterexample}. \\

\noindent\underline{{\bf Step 1}: construction of an adversarial potential $V$.}
	Assume for contradiction that there exists a sequence of algorithms $\{\Gamma_n\}_{n\in\N}$ such that  for any $V\in\Om_{x_0}^{\mathrm{Sch}}$, $\Gamma_n(V)\to\sigma(-\partial_x^2+V)$ as $n\to+\infty$. We now describe a process that defines an ``adversarial'' potential $V$ for which the sequence $\{\Gamma_n\}_{n\in\N}$ will necessarily fail. 
	
	We begin with an example construction that illustrates how a potential can be obtained that ``fools'' $\Gamma_n$ for a single $n$. This construction is then iterated below to obtain a potential whose spectrum is not approximated by the entire sequence $\{\Gamma_n\}_{n\in\N}$.
	For the sake of definiteness we make the arbitrary choice $x_0:=\f14$, though the proof does not depend on this choice. Moreover, we note that by H\"older's inequality one has $\|f\|_{L^1([0,1])} \leq \|f\|_{L^2([0,1])}$ for all $f\in L^2([0,1])$.
	
	With the notation from Lemma \ref{lemma:upper_bound_W}, let $V_1$ be the periodic square well potential $V_{h_1,0,\nicefrac12}$ with $h_1>0$ chosen such that $\inf W(-\partial_x^2+V_1)\leq -1$. Because $-\partial_x^2+V$ is selfadjoint, this implies that $\inf \sigma(-\partial_x^2+V_1)\leq -1$. Our assumptions about $\Gamma_n$ imply that there exists $n_1\in\N$ such that $\inf \re(\Gamma_n(V))<-\f12$ for all $n\geq n_1$. By the definition of an algorithm (Definition \ref{def:Algorithm}), $\Gamma_{n_1}(V)$ depends only on finitely many elements of $\Lambda$, i.e. finitely many point values $V(x_i)$, $i\in\{1,\dots,m_{n_1}\}$. We may assume without loss of generality that the sets $\{x_1,\dots,x_{m_n}\}$ are growing with $n$.
	
	For later reference, let $\rho$ be a smooth, compactly supported function on $\R$ such that
	\begin{align*}
		\rho(0) &= 1 
		\\
		\supp(\rho_1) &\subset\bigl(-\tfrac14, \tfrac14\bigr).
	\end{align*}
	Next, define a new potential $\tilde{V_1}$ by ``thinning out'' $V_1$ around the points $x_i$. More concretely, let $\delta>0$ (to be determined later) and define $l_1 := \min\big\{|x_i-x_j|\,\big|\, i,j\in \{0,\dots,m_{n_1}\},\,x_i\neq x_j\big\}$ (note that the point $x_0$ appears in the definition of $l_1$) and let
	\begin{align*}
		\tilde{V_1}(x) := V_1(x)\sum_{i=1}^{m_{n_1}}\rho\left(\f{x}{\delta l_1} - x_i\right).
	\end{align*}
	\begin{figure}[htbp]
		\centering
		\includegraphics{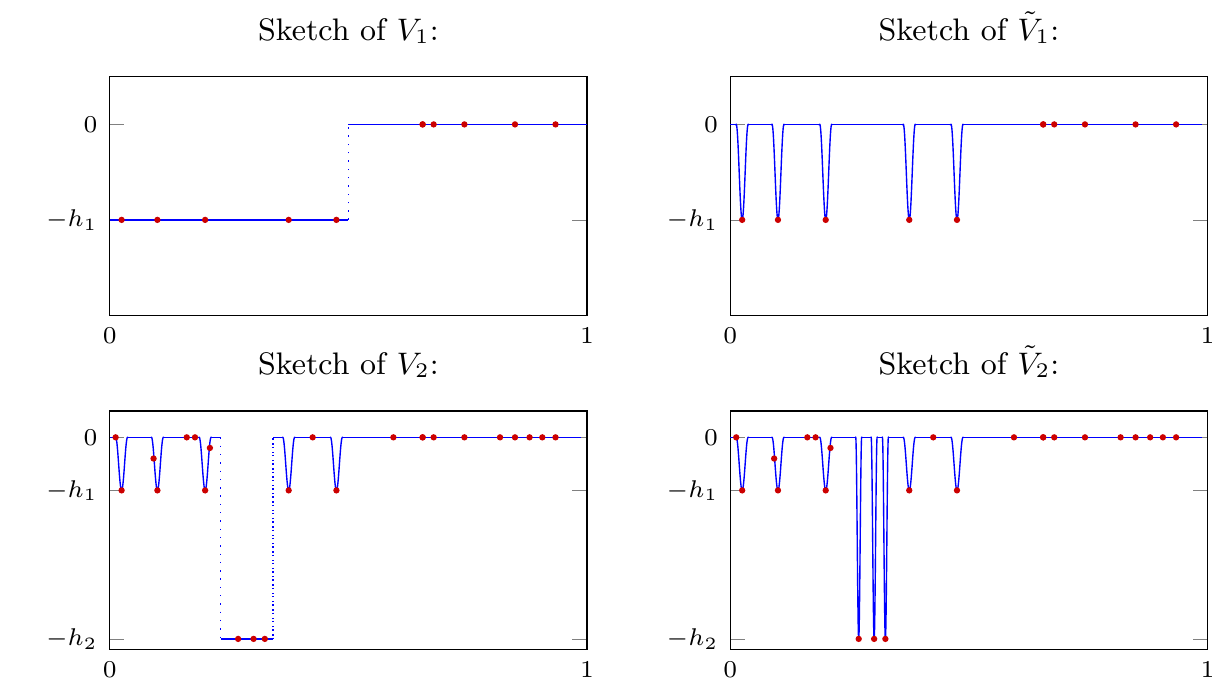}
		\caption{Sketch the first few iterations $V_k$, $\tilde V_k$. The red dots represent the points $x_i$, which represent the information in $\Lambda_{\Gamma_n}$.}
		\label{fig:conterexample}
	\end{figure}
	Then by construction we have $\|\tilde{V_1}\|_{L^2([0,1])} \leq \sum_{i=1}^{m_{n_1}} h_1 \sqrt{\delta l_1} \leq m_{n_1} h_1 \sqrt{\delta}$. Next, apply Lemma \ref{lemma:numerical_range} and choose $\delta$ such that $\inf W(-\partial_x^2+\tilde{V_1}) \geq -\f14$ (and hence $\inf \sigma(-\partial_x^2+\tilde{V_1}) \geq -\f14$). Note that we have $\tilde{V_1}(x_i)=V_1(x_i)$ for all $i\in\{0,\dots,m_{n_1}\}$. Hence by consistency of algorithms we have $\Gamma_{n_1}(\tilde{V_1}) = \Gamma_{n_1}(V_1)$. We have constructed a smooth potential such that 
	\begin{align*}
		\inf \re(\Gamma_n(\tilde{V_1})) &< -\f12
		\\
		\inf \sigma(-\partial_x^2+\tilde{V_1}) &> -\f14
	\end{align*}
	and thus $d_{\mathrm{AW}}\big(\Gamma_{n_1}(\tilde{V_1}), \sigma(-\partial_x^2+\tilde{V_1})\big) \geq \f14$. We remark that $\tilde V_1\equiv 0$ in a neighbourhood of $x_0$.
	
	The constructions outlined above define an iterative process that yields a sequence of smooth potentials $\{V_k\}_{k\in\N}$ and $\{\tilde V_k\}_{k\in\N}$. We outline the details below. Fix $\eta>0$ to be determined later and initialize $\tilde V_0 \equiv 0$.
	\begin{itemize}
		\item Let $2\leq k\in\N$ and suppose that $\tilde V_{k-1}$ has already been defined.
		\item Choose an interval $I=(x_0-\eps,x_0+\eps)$ on which $\tilde V_k\equiv 0$  and let $V_{k}:=\tilde V_{k-1} + V_{h_{k},x_0-\eps,x_0+\eps}$, where $h_{k}$ is chosen such that $\inf W(-\partial_x^2+V_{k})\leq -1$ (cf. Lemma \ref{lemma:upper_bound_W}).
		\item Choose $n_{k}$ large enough such that $\inf\re(\Gamma_{n_{k}}(V_{k}))<-\f12$. Then $\Gamma_{n_{k}}(V_{k})$ depends only on finitely many point values $x_1,\dots,x_{m_{n_{k}}}$. 
		\item Let $l_k := \min\big\{|x_i-x_j|\,\big|\, i,j\in \{0,\dots,m_{n_{k}}\},\,x_i\neq x_j\big\}$ and define a ``thinned out'' potential $\tilde V_{k}$ by 
		\begin{align*}
			\tilde{V}_{k}(x) := \tilde V_{k-1}(x) + V_{h_{k},x_0-\eps,x_0+\eps}(x)\sum_{i=1}^{m_{n_{k}}}\rho\left(\f{x}{\delta l_k} - x_i\right).
		\end{align*}
		Then $\|\tilde V_{k}\|_{L^2([0,1])}\leq \|\tilde V_{k-1}\|_{L^2([0,1])} + m_{n_{k}} h_{k} \sqrt{\delta}$.
		\item Choose $0<\delta<\left(\f{\eta}{2^{k}m_{n_{k}}h_{k}}\right)^2$ such that $\inf W(-\partial_x^2+\tilde{V}_{k})>-\f14$.
		\item It follows that $\Gamma_{n_{k}}(\tilde{V}_{k}) = \Gamma_{n_{k}}(V_{k})$ and thus $d_{\mathrm{AW}}\big(\Gamma_{n_{k}}(\tilde{V}_{k}), \sigma(-\partial_x^2+\tilde{V}_{k})\big) \geq \f14$.
		\item Moreover, we have by construction 
		\begin{align}\label{eq:geometric_sum}
			\|\tilde V_{k}\|_{L^2([0,1])} \leq \eta\sum_{j=0}^k 2^{-j}
		\end{align}
		and there exists an interval $(x_0-\eps',x_0+\eps')$ on which $\tilde V_{k}\equiv 0$.
	\end{itemize}
	This process defines a sequence of potentials $\{\tilde V_k\}_{k\in\N}$ such that for a subsequence $\{n_k\}_{k\in\N}$ one has 
	\begin{align}\label{eq:liminf_big}
		\liminf_{k\to+\infty} d_{\mathrm{AW}}\big(\Gamma_{n_k}(\tilde{V}_{k}), \sigma(-\partial_x^2+\tilde{V}_{k})\big) \geq \f14.
	\end{align}
	Next we show that the sequence $\tilde V_k$ converges pointwise to a function
	$V^\eta\in L^2([0,1])$, which is smooth on $[0,1]\setminus\{x_0\}$. By construction,  for every $\eps>0$, sequence $\{\tilde V_k|_{[0,1]\setminus (x_0-\eps,x_0+\eps)}\}_{k\in\N}$ is eventually constant. Combined with the fact that $\tilde V_k(x_0)=0$ for all $k$, this implies that $\{\tilde V_k\}_{k\in\N}$ converges pointwise to a function $V^\eta$ on $[0,1]$. Because for every $\eps>0$ there exists $k\in\N$ such that $V^\eta|_{[0,1]\setminus (x_0-\eps,x_0+\eps)} = \tilde V_k|_{[0,1]\setminus (x_0-\eps,x_0+\eps)}$, we have that $V^\eta$ is smooth on $[0,1]\setminus\{x_0\}$. Finally, by \eqref{eq:geometric_sum} for any $\eps>0$ there exists $k\in\N$ such that
	\begin{align*}
		\|V^\eta\|_{L^2([0,1]\setminus (x_0-\eps,x_0+\eps))} &= \|\tilde V_k\|_{L^2([0,1]\setminus (x_0-\eps,x_0+\eps))} 
		\\
		&\leq \eta\sum_{j=0}^k 2^{-j}
		\\
		&\leq 2\eta.
	\end{align*}
	Letting $\eps\to 0$ we conclude by monotone convergence that $V^\eta\in L^2([0,1])$ and consequently $V^\eta\in\Om_{x_0}^{\mathrm{Sch}}$. Moreover, the inequality $\|V^\eta\|_{L^2([0,1])}\leq 2\eta$ allows us to use Lemma \ref{lemma:numerical_range} and choose $\eta>0$ small enough that 
	\begin{align}\label{eq:inf_W_V^eta}
		\inf W(-\partial_x^2+V^\eta) = \inf \sigma(-\partial_x^2+V^\eta)\geq -\f14.
	\end{align}
	To conclude the proof we note that for any $k\in\N$ we have $V^\eta(x_i)=\tilde V_{k}(x_i)$ for all $i\in\{1,\dots,m_{n_k}\}$ and by consistency of algorithms we have
	\begin{align}\label{eq:V=Vk}
		\Gamma_{n_k}(V^\eta) = \Gamma_{n_k}(\tilde V_{k})
	\end{align}
	and thus $\inf\re(\Gamma_{n_k}(V^\eta)) \leq -\f12$. 
	Combining eqs. \eqref{eq:liminf_big}, \eqref{eq:inf_W_V^eta} and \eqref{eq:V=Vk} we conclude that
	\begin{align*}
		\liminf_{k\to+\infty} d_{\mathrm{AW}}\big(\Gamma_{n_k}(V^\eta), \sigma(-\partial_x^2+V^\eta)\big) \geq \f14.
	\end{align*}
	Consequently, the sequence $\Gamma_{n}(V^\eta)$ cannot converge to $\sigma(-\partial_x^2+V^\eta)$ and the desired contradiction follows, proving that $\SCI(\Om_{x_0}^{\mathrm{Sch}})\geq 2$.
	\\
	
	\noindent\underline{{\bf Step 2}: Construction of a tower of algorithms.} We conclude the proof of Theorem \ref{th:counterexample} by showing $\SCI(\Om_{x_0}^{\mathrm{Sch}})\leq 2$. To this end, choose a function $\rho_n$ as in \eqref{eq:regularized_potential} change this, if we change the $\rho_n$ above, whose values are explicitly computable (e.g. piecewise linear) and define the mapping
	\begin{align*}
		\Gamma_{m,n} &: \Om_{x_0}^{\mathrm{Sch}} \to \mathcal M
		\\
		\Gamma_{m,n}(V) &= \Gamma_m(\rho_n V),
	\end{align*}
	where $\Gamma_m$ denotes the algorithm from Definition \ref{def:final_alg}.
	Note that $\Gamma_m(\rho_n V)$ is well-defined because $\rho_n V\in W^{1,\infty}(\R)$ for every $n\in\N$. Applying Theorem \ref{th:main2} and Lemma \ref{lemma:spectral_convergence} we immediately find 
	\begin{align*}
		\lim_{n\to+\infty}\lim_{m\to+\infty} \Gamma_{m,n}(V) &= \lim_{n\to+\infty}\lim_{m\to+\infty} \Gamma_m(\rho_n V)
		\\
		&= \lim_{n\to+\infty} \sigma(-\partial_x^2+\rho_n V)
		\\
		&= \sigma(-\partial_x^2+V),
	\end{align*}
	where all limits are taken in Attouch-Wets distance. This completes the proof.\qed\\

	Note that the two limits in Step 2 above cannot be swapped, because it is unclear how $\Gamma_m(\rho_n V)$ behaves when $\rho_n V$ converges to a non-smooth function.

 
 \section{Numerical Results}\label{sec:numerics}

 To illustrate our abstract results, we implemented a version of algorithm \eqref{def:final_alg} in one dimension in Matlab. In this section we show the results of this implementation and compare them against known abstract and numerical results.

 In order to obtain an implementation with adequate performance, we fixed a box $Q$ in the complex plane and then computed the quantity
 \begin{align*}
 	\bigcup_{\theta\in \f1N\Z\cap[0,2\pi]}\left\{ z\in \f1N(\Z+\i\Z)\cap Q\,\middle|\, \big|\det\big(I-P_NK^{\mathrm{appr}}_{n}\big(z,\theta\big)P_N\big)\big|\leq C \right\},
 \end{align*}
 where the numbers $N$, $n$, $C$ were treated as independent parameters. Moreover, the spectral shift, which was chosen to be 1 in \eqref{eq:decomposition} and 2 in \eqref{eq:decomposition2} can be fixed to be any point $z_0$  outside the box $Q$. 
 The routine is illustrated by the following pseudocode.

\medskip
\begin{algorithm}[H]
	\setlength{\lineskip}{1mm}
	\SetAlgorithmName{Pseudocode}{}{}
	\SetAlgoInsideSkip{smallskip}
	\SetAlgoSkip{bigskip}
	\SetAlgoLined
	Fix $N,n\in\N$, $C>0$, $z_0\in\C$\;
	Define lattice $L_N\subset\C$\;
	Define lattice $\Theta_N\subset [0,2\pi]$\;
	Initialize spectrum $\sigma:=\{\}$\;
	Compute Fourier coefficients $\hat V_k^{\text{appr},n}$ (cf. \eqref{eq:approximated_fourier}) for $\f{k}{2\pi}\in\{-N,\dots, N\}$\;
	Define potential matrix $V^{\text{appr},n} := \big( \hat V^{\text{appr},n}_{k-k'} \big)_{k,k'\in\{-2\pi N,\dots,2\pi N\}}$\;
	Set $H:= \diag\big((1+|k|^2)^{\nf12},\,-N\leq \f{k}{2\pi}\leq N\big)$\;
	 \For{$\theta\in\Theta_N$}{
	 	 $B_0:=\diag( 2\theta k + |\theta|^2 - z_0 ,\; -N\leq \f{k}{2\pi}\leq N)$\;
	 	\For{$z\in L_n$}{
	 		 $R:=\diag\big( (z - z_0 - |k|^2)^{-1} ,\; -N\leq \f{k}{2\pi}\leq N\big)$\;
		 	 $K:=(B_0+ H^{-1}\cdot V^{\text{appr},n}\cdot H)\cdot R$\;
		 	 $D:=\det\big(I_{\scriptscriptstyle{(2N+1)\times (2N+1)}}+K\big)$\;
			\If{$|D|<C$}{
				 $\sigma := \sigma\cup \{z\}$\;
			}
		 }
	 }
	 \Return{$\sigma$}
	 \caption{Compute Spectrum}
	 \label{alg:compute_spectrum}
	\end{algorithm}

\medskip
The actual Matlab implementation of Pseudocode \ref{alg:compute_spectrum} is available online at \url{https://github.com/frank-roesler/PeriodicSpectra}.

\medskip
\noindent \textbf{Mathieu equation.}
 We consider the \emph{Mathieu equation}
 \begin{align}\label{eq:mathieu}
 	-u''(x) + \mu\cos(2\pi x)u(x) = \lambda u(x),
 \end{align}
 where $\mu\in\C$ is a constant and $\lambda$ denotes the spectral parameter. This equation was first studied in \cite{Mathieu} in the context of vibrating membranes and has been studied extensively since (see \cite[Ch. 5]{MF} for a discussion). Figure \ref{fig:mathieu} shows the output of our implementation for various values of $\mu$. 
 \begin{figure}[htbp]
 	\centering
 	\includegraphics{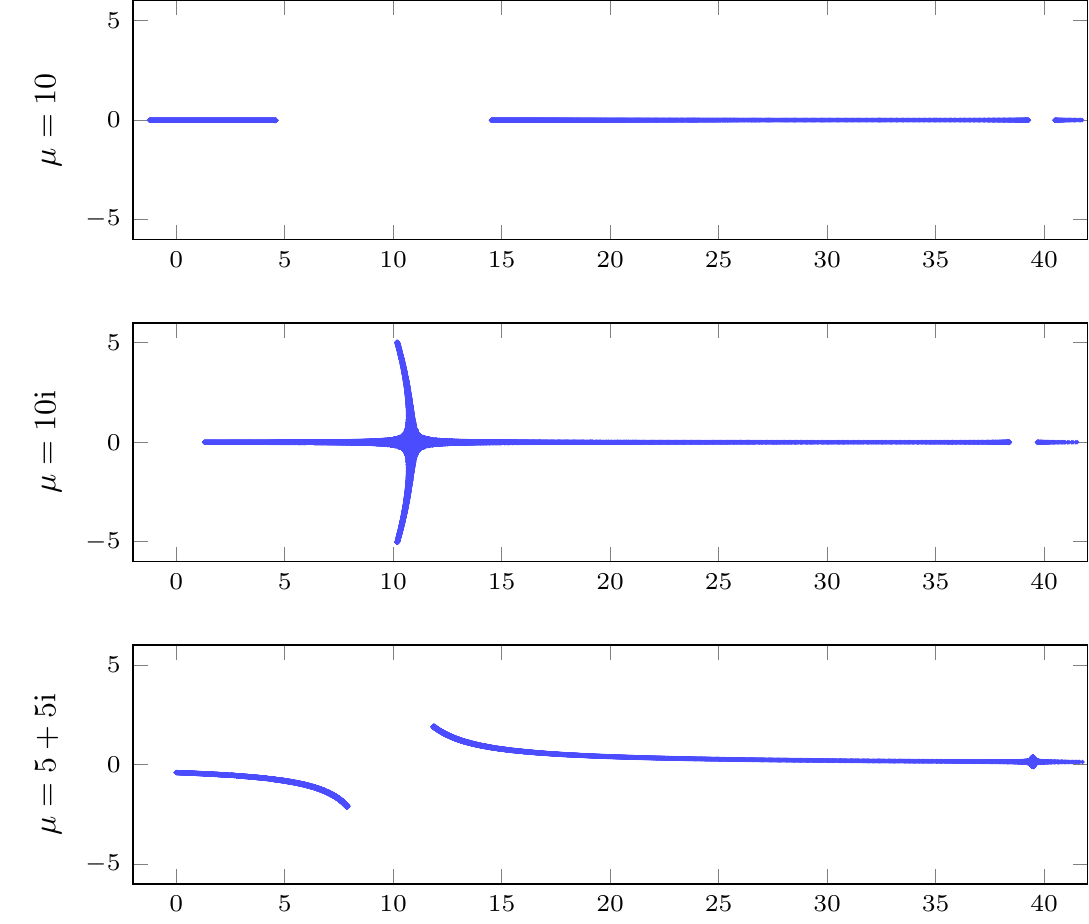}
 	\caption{Spectral approximation in $\C$ for the Mathieu operator for $\mu\in\{ 10,\;10\i,\,5+5\i\}$. Parameter values: $N=200$, $z_0=45$, $n=50$, $C=2\cdot 10^{-5}$.}
 	\label{fig:mathieu}
 \end{figure}
In the case of \textcolor{blue}{a} real-valued potential (top panel of Figure \ref{fig:mathieu}) our algorithm produces the expected band-gap structure, with one gap showing around $\lambda=10$ and another around $\lambda=40$. In the case of purely imaginary $\mu$, the theory of $\mathcal{PT}$-symmetric operators can be used to prove abstract results about the possible shape of the spectrum \cite{Shin}. A comparison between the middle panel of Figure \ref{fig:mathieu} and \cite[Fig. 2]{Shin} shows agreement between the theoretical results and the output of our algorithm. Finally, the bottom panel in Figure \ref{fig:mathieu} shows the output when $\mu$ has both a real and an imaginary part and the $\mathcal{PT}$ symmetry is broken.

To further validate our results, let us focus on the $\mathcal{PT}$ symmetric case ($\mu=10\i$) and compare them to existing results available in one dimension. Let $\phi_{1,\lambda}$, $\phi_{2,\lambda}$ be two classical solutions of \eqref{eq:mathieu} on $(0,1)$ with initial conditions $\phi_{1,\lambda}(0)=1$, $\phi_{1,\lambda}'(0)=0$ and $\phi_{2,\lambda}(0)=0$, $\phi_{2,\lambda}'(0)=1$. Then, the \emph{Hill Discriminant} is defined by
\begin{align*}
	D(\lambda) := \f12 (\phi_{1,\lambda}(1) + \phi_{2,\lambda}'(1))
\end{align*}
and one can show (cf. \cite{E73}) that $\lambda$ is in the spectrum of \eqref{eq:mathieu} if and only if $-1\leq D(\lambda)\leq 1$. Figure \ref{fig:discriminant} shows the points in $\C$ satisfying a softened version of this discriminant inequality, computed from a Runge-Kutta approximation of $\phi_{1,\lambda}$, $\phi_{2,\lambda}$. A comparison between Figures \ref{fig:discriminant} and \ref{fig:mathieu} shows good agreement.
\begin{figure}
	\centering
 	\includegraphics{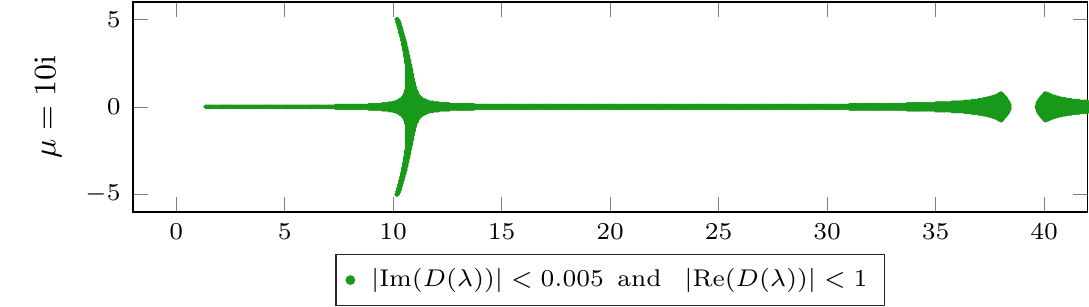}
 	\caption{Spectral approximation with discriminant method.}
 	\label{fig:discriminant}
\end{figure}

Finally, we note that our method is naturally immune to the common problem of \emph{spectral pollution}. By definition, spectral pollution occurs when there exist sequences $z_n\in\Gamma_n(V)$ such that $\{z_n\}_{n\in\N}$ has an accumulation point outside $\sigma(-\Delta+V)$. This effect appears in the approximation of Mathieu's equation if a naive finite difference scheme is used on a truncated domain (see Figure \ref{fig:domain_truncation}). For arbitrarily fine discretization and arbitrarily large truncated domain, the method is sensitive to small perturbations of the domain and can yield a set which is far from the correct spectrum.
\begin{figure}[htbp]
	\centering
 	\includegraphics{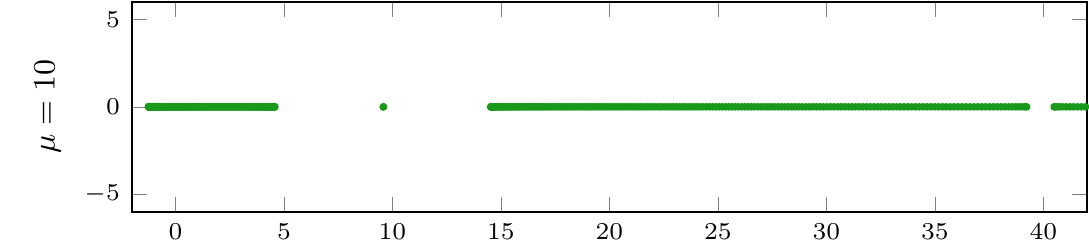}
 	\caption{Spectral approximation from finite difference scheme on truncated domain $[-100.7,\,100.7]$. The spurious eigenvalue in the gap for $\mu=10$ can appear for arbitrarily fine discretization.}
 	\label{fig:domain_truncation}
\end{figure}

 \appendix

\section{The Hausdorff and Attouch-Wets distances}
\label{sec:metrics}
Since the Attouch-Wets distance is not well-known, we provide the basic definition here, and use the opportunity to remind the reader of the Hausdorff distance as well. For further information we refer to \cite{beer}.

\begin{de}[Hausdorff distance for subsets of $\R^d$]
Let $A,B\subset\R^d$ be two non-empty bounded sets. Their \emph{Hausdorff distance} is
	\begin{align*}
	d_{\mathrm{H}}(A,B)
	&=
	\max\left\{\sup_{a\in A}\inf_{b\in B}|a-b|,\sup_{b\in B}\inf_{a\in A}|a-b|\right\}\\
	&=
	\sup_{p\in\R^d}\left|\inf_{a\in A}|a-p|-\inf_{b\in B}|b-p|\right|.
	\end{align*}
\end{de}

\begin{de}[Attouch-Wets distance for subsets of $\R^d$]\label{def:Attouch-Wets}
Let $A,B\subset\R^d$ be two non-empty (possibly unbounded) sets. Their  \emph{Attouch-Wets distance} is
	\begin{align*}
	d_{\mathrm{AW}}(A,B) = \sum_{n=1}^\infty 2^{-n}\min\left\{ 1\,,\,\sup_{p\in\R^d,\,|p|<n}\left| \inf_{a\in A}|a-p| - \inf_{b\in B}|b-p| \right| \right\}.
	\end{align*} 
\end{de}
Note that if $A,B$ are bounded, then $d_{\mathrm{AW}}$ and $d_\mathrm{H}$ are equivalent. Furthermore, it can be shown (cf. \cite[Ch. 3]{beer}) that 
	\begin{equation*}\label{eq:Attouch-Wets}
		d_\mathrm{H}(A_n\cap B, A\cap B)\to 0 \text{ for all }B\subset\R^d \text{ compact } \;\Rightarrow\; d_\mathrm{AW}(A_n,A)\to 0.
	\end{equation*}

\bibliography{mybib}
\bibliographystyle{abbrv}

\end{document}